\pdfoutput=1
\documentclass[a4paper]{article}
\usepackage{lmodern}

\usepackage{fullpage}

\usepackage[normalem]{ulem}
\usepackage[utf8]{inputenc}
\usepackage[T1]{fontenc}
\usepackage[english]{babel}

\usepackage{graphicx,amsfonts,amsmath,amssymb,amsthm,bm}
\graphicspath{{./}{figs/}}
\usepackage[authoryear,sort&compress]{natbib}

\PassOptionsToPackage{hypertexnames=false}{hyperref}
\usepackage{main-commands}
\usepackage{longtable}
\usepackage{booktabs}
\usepackage{array}

\usepackage{nicefrac}
\usepackage{enumitem} 
\usepackage{cleveref}
\numberwithin{equation}{section}

\usepackage{tikz}
\usetikzlibrary{shapes.geometric,calc,positioning}

\newcommand{\objmeas}{J_{\calB^\star}}
\newcommand{\objdual}{J_{\calH}}

\renewcommand{\inlPC}[1]{\begin{color}{red}\texttt{PC:#1}\end{color}}
\renewcommand{\comPC}[1]{\todo[color=red!20]{PC:#1}}

\renewcommand{\comPC}[1]{}
\renewcommand{\comYDC}[1]{}
\renewcommand{\inlPC}[1]{}

\usepackage{authblk}

\title{Fenchel--Young Duality Gaps:\\ Certified Early Stopping for Regularized Inverse Problems}

\date{September 2026}

\author[1]{{\small Pierre-Cyril} Aubin-Frankowski} 
\author[1,2,3]{{\small Yohann} De Castro}

\affil[1]{\it \small CERMICS, CNRS, ENPC, Institut Polytechnique de Paris, Marne-la-Vallée, France.}
\affil[2]{\it \small Institut Camille Jordan, École Centrale Lyon, CNRS UMR 5208, France.}
\affil[3]{\it \small Institut Universitaire de France (IUF).}

\begin{document}
\pagestyle{plain}
\setcounter{page}{1}

\maketitle

\begin{abstract}
    We study computable error bounds and certified early stopping for regularized inverse problems, where a data-fidelity term is traded against a regularizer. The analysis relies on an exact duality-gap identity that splits the total gap of $F(\Phi\mu)+\lambda R(\mu)$ into a data-fidelity Fenchel--Young loss and a regularizer Fenchel--Young loss,
    \[
        \Delta(\mu,h)=L_F(\Phi\mu\parallel h)+\lambda L_R(\mu\parallel\eta),\qquad \eta=-\Phi^\star h/\lambda,
    \]
    valid for any primal point $\mu$ and any dual point $h$. The data-fidelity term $F$ is strictly convex, so wherever $F^\star$ is differentiable the loss $L_F(\Phi\mu\parallel h)$ is the Bregman divergence of~$F$ between the prediction $\Phi\mu$ and $\nabla F^\star(h)$, and it vanishes exactly at \emph{Mirror Alignment} $h=\nabla F(\Phi\mu)$. Evaluated at a dual-feasible point $\tilde h$, the gap~$\Delta(\mu,\tilde h)$ is computable and \emph{oracle-free}, meaning that it uses no knowledge of the solution, and it bounds the suboptimality of $\mu$. Under the source condition, the same Fenchel--Young losses give \emph{a priori} bounds on the estimation and prediction errors. Their scale is the irreducible model and noise error $L_F(\Phi\mu^\star\parallel h^\star)$, which vanishes exactly when Mirror Alignment holds at the certificate. This gives an early-stopping rule: run the algorithm until the regularizer Fenchel--Young loss falls below a tolerance $\epsilon$. A constructive version of the Br\o ndsted--Rockafellar theorem then turns the current pair into an exact \emph{dual-feasible} one, and this proxy lifts to an exact primal certificate. We build the proxy by a proximal step in the geometry of the fidelity, with Bregman kernel $F^\star$ and tilted by the prediction $\Phi\mu$: it recovers the Euclidean step of Carlier when $F$ is the squared error, and it reduces the duality gap by the regularizer Fenchel--Young loss, up to a second-order remainder that vanishes in the quadratic case. Our running example is the Generalized Beurling--Lasso (GBL), where $R$ is the total-variation norm on signed measures. It contains the classical Beurling--Lasso, obtained with the squared error, and also covers robust, logistic, entropic and inverse-optimal-transport losses. The same duality gap certifies deep-learning optimizers such as Lion-K and Muon, in their proximal form, as solvers of the regularized program. A companion paper by the same authors builds on these error bounds to establish exact support recovery for the GBL under a non-degenerate source condition.
\end{abstract}

\medskip
\noindent\textbf{Keywords.} Fenchel--Young losses; duality gap; certified early stopping; Br\o ndsted--Rockafellar theorem; total-variation regularization; Beurling--Lasso; Mirror Alignment; Bregman divergence.

\noindent\textbf{MSC 2020.} 90C25, 49N15, 65K10, 94A12, 49M29.

\section{Introduction}
Recovering structured signals from indirect, noisy measurements is a classical problem in inverse problems and signal processing. We focus on convex programs that combine a strictly convex \emph{data-fidelity} term $F:\calH\to\R$, acting on the prediction $\Phi\mu$ in a separable Hilbert space $\calH$, with a lower semi-continuous proper convex \emph{regularizer} $R$, acting on a Banach-dual variable $\mu$,
\begin{equation}
    \addtocounter{equation}{1}
    \tag{\theequation,\,$\mathrm{F\!+\!R}$}
    \label{eq:f_plus_r_intro}
    \min_\mu \Big\{\, F(\Phi\mu) + \lambda R(\mu) \,\Big\}\,,
\end{equation}
where $\Phi$ is a bounded weak-$*$ continuous forward operator and $\lambda>0$ trades fidelity against regularization. All the results of this paper (duality-gap decomposition, Fenchel--Young error bounds, Mirror Alignment, Br\o ndsted--Rockafellar early stopping, and its mirror-prox refinement) are developed at this level of generality; specific choices of $R$ (the total-variation norm of measures, group/joint TV, the nuclear norm, analysis sparsity, $\ldots$) instantiate the framework on the application domains catalogued in Section~\ref{sec:applications}.

A canonical instance, used as our running example throughout the paper, is the \emph{Generalized Beurling--Lasso} (GBL), obtained by specializing~\eqref{eq:f_plus_r_intro} to signed measures $\mu\in\calM(\calX)$ on a closed domain $\calX\subseteq\R^d$, possibly unbounded, with the total-variation norm $R(\mu)=\|\mu\|_{\TV}$:
\begin{equation}
    \addtocounter{equation}{1}
    \tag{\theequation,\,$\mathrm{GBL}$}
    \label{eq:gbl_intro}
    \min_{\mu \in \calM(\calX)}
        \Big\{
            F(\Phi\mu) + \lambda \|\mu\|_{\TV}
        \Big\}
    \,.
\end{equation}
The classical \emph{Beurling--Lasso} (BLASSO) of \citet{de2012exact,bredies2013inverse} is the further specialization of GBL to the squared-error data fidelity $F(h)=\tfrac{1}{2}\|h-y\|_{\calH}^2$ with measurement $y\in\calH$:
\begin{equation}
    \addtocounter{equation}{1}
    \tag{\theequation,\,$\mathrm{BLASSO}$}
    \label{eq:blasso_standard}
    \min_{\mu \in \calM(\calX)}
        \Big\{
            \tfrac{1}{2}\|\Phi\mu - y\|_{\calH}^2 + \lambda \|\mu\|_{\TV}
        \Big\}
    \,.
\end{equation}
Moving from BLASSO to the GBL form~\eqref{eq:gbl_intro}, and then to the abstract F+R program~\eqref{eq:f_plus_r_intro}, is needed to model settings where additive Gaussian noise is not appropriate, such as photon-limited imaging which uses Poisson regression; binary classification which uses logistic losses; robust regression which uses losses that tolerate heavy-tailed noise, such as the Huber loss; or inverse optimal transport which uses an entropic generating functional. The same Fenchel--Young geometry applies to all of these cases, and they are treated in this paper.

\subsection{Contributions}\label{subsec:contributions}
We organize our contributions as answers to two questions about the convex program \eqref{eq:f_plus_r_intro} and its GBL specialization~\eqref{eq:gbl_intro}. Our main message is that algebraic manipulations on Fenchel--Young gaps/losses are very informative and provide early stopping certificates.

\medskip\noindent
\textit{Q1. When does the duality gap give a computable error bound on the optimization error?}
We prove that for any primal variable $\mu$ and any dual variable $h$, the total duality gap admits an exact algebraic decomposition (Theorem~\ref{thm:dual_gap}):
\begin{equation*}
    \Delta(\mu,h) = L_F(\Phi\mu \parallel h) + \lambda L_R(\mu \parallel \eta)\,, \quad \text{where} \quad \eta = -\frac{\Phi^\star h}{\lambda}\,.
\end{equation*}
Here $L_F$ is the data-fidelity Fenchel--Young loss (equal, for any $h\in\operatorname{int}\dom F^\star$, to the Bregman divergence of~$F$ between $\Phi\mu$ and $\nabla F^\star(h)$, and vanishing exactly at \emph{Mirror Alignment} $h=\nabla F(\Phi\mu)$), and $L_R$ penalizes the violation of the subgradient condition for the regularizer. A primal--dual pair $(\mu,h)$ meeting both the source condition $\eta\in\partial R(\mu)$ and Mirror Alignment $h=\nabla F(\Phi\mu)$ is optimal, and conversely every primal solution $\mu$ admits such a dual $h$ (Theorem~\ref{thm:robust_align_first_order}); at optimality, primal prediction and dual variable are Legendre--Fenchel conjugates. The loss $L_F$ then gives two guarantees: (i) evaluated at a dual-feasible proxy $\tilde h$, the duality gap~$\Delta(\mu,\tilde h)$ is a \emph{computable}, oracle-free upper bound on the objective suboptimality (Theorem~\ref{thm:dual_gap}); and (ii) under the source condition, the same Fenchel--Young losses bound the estimation and prediction errors \emph{a priori} through the scalar $L_F(\Phi\mu^\star\parallel h^\star)$ corresponding to the irreducible model/noise error (Theorem~\ref{thm:FY_discrepancy_error_bounds}). 

\medskip\noindent
\textit{Q2. How early can an algorithm be stopped while still certifying admissibility?}
Practical algorithms drive the regularizer Fenchel--Young loss $\lambda L_R(\mu\parallel\eta)$ below a tolerance $\epsilon>0$, where $\eta\coloneqq -\Phi^\star h/\lambda$ is the candidate certificate associated to the dual iterate $h$. The classical Br\o ndsted--Rockafellar theorem produces a nearby exact subgradient pair, and \citet{carlier2023fenchel} makes this \emph{constructive} by realizing the proxy as a proximal step. This dual-feasible proxy lifts to a primal certificate $\bar\mu$ with an \emph{exact} source condition $\bar\eta\in\partial R(\bar\mu)$, unconditionally for a norm regularizer (Proposition~\ref{prop:BR_primal_lift}). Since nothing in the construction is tied to the quadratic geometry, we finally replace Carlier's Euclidean prox by a mirror proximal step with kernel $F^\star$ tilted by the prediction $\Phi\mu$: it obeys an exact duality-gap identity (Proposition~\ref{prop:mirror_gap_identity}) and, in the quadratic case, improves the gap by exactly $\lambda L_R(\mu\parallel\eta)$. 

\paragraph{Algorithmic implications.} Mirror Alignment also relates to several heuristics used in deep-learning optimization. The continuous-time trajectories of the Lion-K framework \citep{chen2023lionk} can be read as gradient flows of dual certificates of a continuous regularizer. The Muon optimizer \citep{jordan2024muon} performs matrix-sign (polar) orthogonalization of gradient updates onto the Stiefel manifold, which corresponds to the Mirror Alignment condition of a nuclear-norm penalty. Note that this polar step admits efficient implementations via the Polar Express algorithm \citep{amsel2026polar}. Section~\ref{subsec:exp_lion_muon} demonstrates this experimentally: when Lion-K's $\operatorname{sign}$ step and Muon's matrix-sign step are used as the proximal operator of their regularizer ($\ell_1$ and the nuclear norm), the resulting iterations coincide with ISTA (iterative shrinkage-thresholding) and SVT (singular-value thresholding), and hence solve the $F+\lambda R$ program. The Fenchel--Young duality-gap decomposition $\Delta = L_F + \lambda L_R$ (evaluated at a rescaled, dual-feasible certificate) certifies these iterates as shown in Figure~\ref{fig:lion_muon_delta} where the ill-conditioning of the problem makes convergence gradual: for both optimizers $\Delta_t$ decays steadily to zero and stays above the optimality gap, certifying (without any oracle access to the optimum) that they solve the $F+\lambda R$ program.

\begin{figure}[ht]
    \centering
    \includegraphics[width=\linewidth]{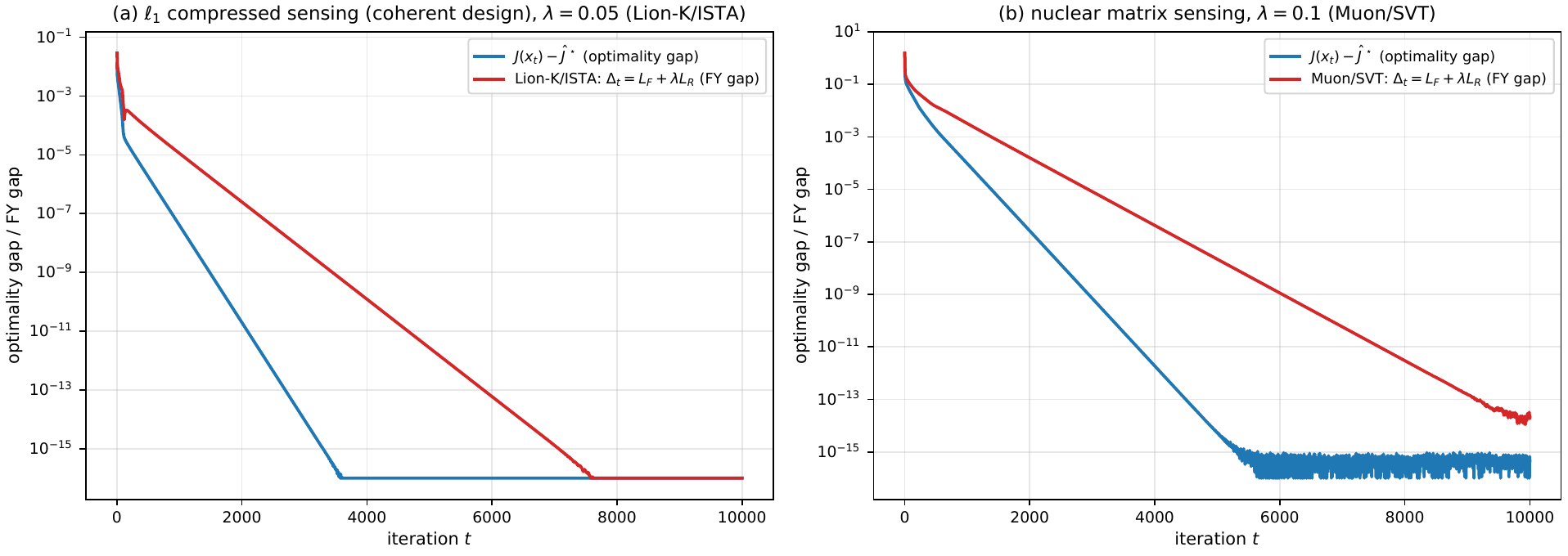}
    \caption{Fenchel--Young gap at a rescaled Mirror-Alignment dual, along the trajectories of two proximal $F+\lambda R$ solvers on ill-conditioned instances (double precision; first $10\,000$ iterations). \textbf{(a)} Coherent $\ell_1$ compressed sensing (Lion-K\,/\,ISTA: the $\operatorname{sign}$ step used as the $\ell_1$ proximal operator; $\mathrm{AR}(1)$-correlated design with $\rho=0.9$), penalty $\lambda=0.05$. \textbf{(b)} Nuclear-norm matrix sensing from $p=120$ Gaussian measurements (Muon\,/\,SVT: the $\operatorname{matsign}$ step used as the nuclear-norm proximal operator), penalty $\lambda=0.1$. In each panel, red: the oracle-free Fenchel--Young gap $\Delta_t$, which upper-bounds the (oracle) primal optimality gap $J(\mu_t)-\hat J^\star$ shown in blue, with $\hat J^\star$ the minimal objective over an extended reference~run.}
    \label{fig:lion_muon_delta}
\end{figure}

\paragraph{Companion paper.} Building on the Fenchel--Young error bounds of this paper, a companion work \citep{aubin2026localization} introduces a functional noise model and a non-degenerate source condition (NDSC; \citealp{duval2013exact,azais2015spike,poon2023geometry}) under which the GBL solution is unique, exactly $k$-sparse with the correct signs, and converges at a strict first-order spatial rate in $(\lambda,\|w\|_\calH)$; it verifies the NDSC through a Fisher-reweighted Local Positive Curvature pivot, obtains sign-free support localization from any non-degenerate certificate via a Kernel Switch principle, and instantiates the theory on the well-specified population logistic GBL with the sinc-4 pivot, attaining the parametric statistical rate up to a logarithmic factor. The present paper supplies the gap decomposition and the error bounds on which that analysis rests.

\subsection{Related work}
The mathematical foundations of exact recovery over spaces of measures were formalized by \citet{bredies2013inverse}, who established the well-posedness of Tikhonov regularization with total-variation penalties on the space of finite Radon measures. The first quantitative error bounds for variational regularization with a \emph{non-quadratic} convex penalty $R$ are due to \citet{burger2004convergence}, who proved that under the source condition $\Phi^\star\tilde h\in\partial R(\mu^\star)$ the generalized Bregman divergence $D_R(\hat\mu_\lambda,\mu^\star)$ between the Tikhonov minimizer and the $R$-minimizing solution decays at rate $\calO(\lambda+\delta)$ for $\lambda\asymp\delta$, with the explicit geometric specialization $D_R\equiv|\cdot|_{BV}$ on $BV(\Omega)$. Our framework recovers their identity as the regularizer-side component $\lambda L_R(\mu\parallel\eta^\star)$ of the duality gap (Proposition~\ref{prop:FYL_source_Bregman}) and generalizes the quadratic fidelity to arbitrary strictly convex $F$ via Fenchel--Young losses. Continuous greedy algorithms (generalized conditional gradient and its sliding variants \citealp{bredies2013inverse,boyd2017alternating,denoyelle2019sliding}) give $\calO(1/k)$ duality-gap rates for sparse deconvolution, but their analysis is tied almost exclusively to quadratic data fidelities.

Building on these conditions, the duality gap admits an exact decomposition into Fenchel--Young losses (see \cite{blondel2019learning} for a review of the latter). The geometric utility of these losses was highlighted by \citet{carlier2023fenchel}, who showed that a sharpened Fenchel--Young inequality controls the distance to the graph of the gradient mapping, ensuring measurement-space stability even when the primal is not unique; the same Br\o ndsted--Rockafellar bound (made \emph{constructive} in the same paper) is what we use for early-stopping certification. \citet{andrade2025learning} extended these sharpened losses to inverse entropic unbalanced optimal transport and to inverse Jordan--Kinderlehrer--Otto learning of population dynamics from independent snapshot samples, obtaining high-probability parameter recovery bounds. Other approximations of duality gaps exist, see \cite{Walwil2025} for a recent review, here we stick to the analysis for the true gap, tying it to Bregman divergences and Fenchel--Young losses.

\paragraph{Notation.}
For a set, $\operatorname{int}(\cdot)$ represents its topological interior. Let $\calX \subseteq \R^d$ be a closed domain with $\calX=\overline{\operatorname{int}\calX}$, possibly unbounded. We denote by $\calM(\calX)$ the space of finite signed Radon measures on~$\calX$, equipped with the total variation norm $\|\cdot\|_{\TV}$. Its pre-dual is the space~$\calC_0(\calX)$ of continuous functions vanishing at infinity, endowed with the supremum norm $\|\cdot\|_\infty$; for compact $\calX$, $\calC_0(\calX)=\calC(\calX)$. The duality pairing between $f \in \calC_0(\calX)$ and $\mu \in \calM(\calX)$ is denoted $\langle f, \mu \rangle_{\calC,\calM} = \int_{\calX} f\,\mathrm{d}\mu$; for a general pre-dual pair, $\langle f, \mu \rangle_{\calB,\calB^\star}$ denotes the pairing between $\calB$ and $\calB^\star$. We let~$\calH$ denote a separable Hilbert space with inner product $\langle \cdot, \cdot \rangle_{\calH}$ and norm $\|\cdot\|_{\calH}$. The forward operator is a linear map $\Phi: \calM(\calX) \to \calH$, continuous from the weak-$*$ topology of $\calM(\calX)$ to the weak topology of~$\calH$ (hence norm-bounded), with its pre-adjoint denoted by $\Phi^\star: \calH \to \calC_0(\calX)$. For a convex function $F$, $F^\star$ denotes its Legendre--Fenchel conjugate, $\dom(F)$ its domain and $\partial F$ its subdifferential. For a closed convex set $C$, $\iota_C$ denotes its indicator function $($zero on $C$ and $+\infty$ outside$)$ and, for $x\in C$, $N_C(x)\coloneqq\partial\iota_C(x)$ its outward normal cone, that is the set of $v$ in the dual of the ambient space with $\langle v,y-x\rangle\le0$ for every $y\in C$; for $C\subset\calH$ the pairing is the inner product of~$\calH$. Furthermore, we denote the set of non-negative real numbers by $\mathbb{R}_+$. Given a discrete measure $\mu = \sum_{i=1}^k a_i \delta_{x_i}$, we denote its support by $S = \{x_1, \dots, x_k\}$ and the signs of its amplitudes by $s_i = \operatorname{sign}(a_i)$. For $z\in\mathbb{R}^n$ and $\delta>0$, $B_\infty^\delta(z)\coloneqq\{w\in\mathbb{R}^n:\|w-z\|_\infty\le\delta\}$ denotes the closed $\ell_\infty$-ball of radius $\delta$ about $z$, and $B_\infty^\delta\coloneqq B_\infty^\delta(0)$.

\section{Fenchel duality, gap decomposition, and error bounds}
\label{sec:controlled_gaps}
This section starts with the GBL example, stating the source condition and Mirror Alignment informally, and then develops the abstract framework (duality-gap decomposition and Fenchel--Young error bounds) that makes them precise; the Br\o ndsted--Rockafellar algorithmic-stability theory, its primal-certificate lift, and a mirror-prox refinement follow in Section~\ref{sec:approx_source_cond}.

\subsection{Setting and assumptions}
\begin{subequations}

To foster intuition, we begin with the GBL specialization~\eqref{eq:gbl_intro}, where the primal is a signed measure $\mu\in\calM(\calX)$ and the regularizer is the total variation norm,
\begin{equation}
    \label{eq:gbl_problem}
    \inf_{\mu\in\calM(\calX)}\Big\{\, J_{\mathrm{GBL}}(\mu) \;\coloneqq\; F(\Phi\mu) + \lambda\|\mu\|_{\TV}\,\Big\},
\end{equation}
and $F:\calH\to\R$ is a strictly convex, continuously differentiable data-fidelity term and $\lambda>0$ is the regularization parameter. Varying $F$ encompasses several standard problems: \emph{continuous sparse regression} (least squares; $F(h)=\tfrac12\|h-y\|^2_\calH$, the classical BLASSO of \citealp{de2012exact,bredies2013inverse}); \emph{classification BLASSO} (logistic or squared-hinge $F$); \emph{counting/intensity estimation} (Poisson or KL); and \emph{robust continuous sparse regression} (Huber). The full catalogue is treated in Section~\ref{sec:applications}. The Fenchel pre-dual of~\eqref{eq:gbl_problem} maximizes over an auxiliary variable $h\in\calH$:
\begin{equation}
    \label{eq:gbl_dual}
    \sup_{h\in\calH}\,\Big\{-F^\star(h)\;\;\text{subject to}\;\;\|\Phi^\star h\|_\infty\le\lambda\,\Big\}.
\end{equation}
The constraint $\|\Phi^\star h\|_\infty\le\lambda$ enforces that the continuous function $\Phi^\star h$ stays bounded uniformly on $\calX$, preventing dual residuals from exceeding the sparsity threshold. Strong duality between~\eqref{eq:gbl_problem} and~\eqref{eq:gbl_dual} is established below as a consequence of the general F+R duality results.
\end{subequations}

\medskip\noindent
The first-order condition for \eqref{eq:gbl_problem} reads $0\in\partial(F\circ\Phi+\lambda R)(\mu^\star)$. Since the sum of subdifferentials is always contained in the subdifferential of the sum, it is enough to find $\mu^\star$ such that $0\in\partial(F\circ\Phi)(\mu^\star)+\partial(\lambda R)(\mu^\star)$; conversely, every primal solution admits such a dual variable (Theorem~\ref{thm:robust_align_first_order}). As $F$ is differentiable, $\partial(F\circ\Phi)(\mu^\star)=\{\Phi^\star\nabla F(\Phi\mu^\star)\}$, so, introducing (i) $h^\star=\nabla F(\Phi\mu^\star)$ and (ii) $\eta^\star\in\partial R(\mu^\star)$, the problem becomes equivalent to having (i) and (ii) satisfied together with (iii) $\eta^\star=-\Phi^\star h^\star/\lambda$. The combination of (ii) and (iii) asks the certificate $-\Phi^\star h^\star/\lambda$ to lie in $\partial\|\mu^\star\|_{\TV}$, and is known as the \emph{source condition}; condition (i) is what we call \emph{Mirror Alignment}. Both (i) and (ii) are equality cases of a Fenchel--Young inequality, that is, points of vanishing Fenchel--Young gap, which will be key for our reasoning below.

\paragraph{Source condition.} The candidate certificate $\eta^\star=-\Phi^\star h^\star/\lambda$ belongs to $\calC_0(\calX)$. Concretely, for a $k$-sparse measure $\mu^\star=\sum_{i=1}^k a_i^\star\delta_{x_i^\star}$ with signs $s_i=\operatorname{sign}(a_i^\star)$, the source condition asks that the continuous function $\eta^\star$ interpolates the signs on the support and is bounded by one everywhere:
\[
    \eta^\star(x_i^\star)=s_i\quad\text{for }i=1,\dots,k,\qquad \|\eta^\star\|_\infty\le 1.
\]
Asking in addition the strict inequality $|\eta^\star(x)|<1$ off the support is the stronger \emph{non-degenerate} source condition studied in the companion paper~\citep{aubin2026localization} and well known in the literature~\citep{duval2013exact,azais2015spike,poon2023geometry}.

\paragraph{Mirror Alignment.} Condition (i) reads backwards as $\Phi\mu^\star=\nabla F^\star(h^\star)$. We call this \emph{Mirror Alignment}, because it generalizes the mirror-descent primal/dual identity beyond the squared loss.

\paragraph{Framework and notation.}
We work over a generic Banach-dual primal space $\calB^\star$ and a separable Hilbert measurement space $\calH$. The framework rests on a triple of spaces and a pair of functions.

\begin{de}[Set $\calG_0$]
\label{def_assumption:spaces}
    We define $\calG_0$ as the set of triples $(\calH,\calB,\Phi)$ where $(\calH,\langle\cdot,\cdot\rangle_{\calH})$ is a \emph{separable Hilbert} space; $\calB$ is a Banach space with dual space $\calB^\star$; and $\Phi\,:\,\calB^\star\to\calH$ is a linear operator that is \emph{weak-$*$-to-weak continuous}, i.e.\ continuous from $(\calB^\star,\sigma(\calB^\star,\calB))$ to $\calH$ with its weak topology. The operator~$\Phi$ is referred to as the \emph{forward operator}.
\end{de}
\noindent
 By Lemma~\ref{lem:dual_Phi_banach}, the pre-adjoint $\Phi^\star\,:\,\mathcal{H}\to\calB$ maps into the pre-dual space $\calB$, itself identified with a subspace of the bidual $\calB^{\star\star}$.

\medskip

\begin{remark}[The signed measures example]
\label{rem:signed_measures}
   Let $\calX\subseteq\mathbb{R}^d$ be a \emph{closed} set, possibly unbounded, and let $\calB=(\calC_0(\calX),\|\cdot\|_\infty)$ be the space of continuous functions \emph{vanishing at infinity} on $\calX$ and $\calB^\star=(\calM(\calX),\|\cdot\|_{\TV})$ its dual space, identified with the space of finite signed Radon measures on $\calX$ by the Riesz--Markov theorem \citep[Theorem~6.19]{rudin1987real}; for compact $\calX$ one has $\calC_0(\calX)=\calC(\calX)$ and recovers the classical setting. By Lemma~\ref{lem:dual_Phi}, for any linear weak-$*$-to-weak continuous operator~$\Phi$, one has
\[
        \Phi\mu \,=\, \int_{\calX} \varphi_t\,\mathrm d\mu(t)\,,\quad\mu\in\calM(\calX)\,,
\]
where $\varphi_t\coloneqq \Phi\delta_t$ with $\delta_t$ the Dirac measure at $t\in\calX$; and the dual operator $\Phi^\star\,:\,\calH\to\calC_0(\calX)$ reads
\[
    \Phi^\star\,:\,h\in\calH\mapsto \big(t\in\calX\mapsto\langle\varphi_t,h\rangle_{\calH}\big)\in\calC_0(\calX)\,,
\]
where we identified the pre-dual space $\calC_0(\calX)$ with a subspace of the dual $\calM(\calX)^\star$. For unbounded $\calX$, membership of $\Phi^\star h$ in $\calC_0(\calX)$ is a real restriction on the forward operator: since $\delta_t\rightharpoonup^* 0$ as $|t|\to\infty$, weak-$*$-to-weak continuity forces the features to vanish weakly, $\varphi_t\rightharpoonup 0$ in $\calH$. Observables that do not vanish at infinity are excluded, such as the total mass $\mu\mapsto\mu(\calX)=\langle\mathbf{1},\mu\rangle$, admissible only for compact~$\calX$ since $\mathbf{1}\notin\calC_0(\calX)$. Translation-invariant feature maps on $\calH=L^2(\R^d)$ are however admissible, their translates converging weakly to zero. Among the examples of Section~\ref{sec:applications}, only the total variation of the gradient (Section~\ref{subsubsec:TV_gradient}) requires a bounded domain.
\end{remark}

\begin{example}[Mixture of Gaussians]
\label{ex:gaussian_mixture}
Take $\calX=\R^d$, $\calH=L^2(\R^d)$ and let the feature $\varphi_t$ be the Gaussian density on $\R^d$ centered at $t$, with covariance matrix $\sigma^2\Id$. The features are translates of one fixed function of $L^2(\R^d)$, so they converge weakly to zero as $|t|\to\infty$ and the triple $(\calH,\calB,\Phi)$ belongs to $\calG_0$ even though $\calX=\R^d$ is unbounded. The prediction
\[
    \Phi\mu=\int_{\R^d}\varphi_t\,\mathrm d\mu(t)
\]
is a mixture of Gaussians with mixing measure $\mu$, and the GBL estimates $\mu$ from the mixture. With the squared norm $F=\frac12\|\cdot-y\|_\calH^2$ this is the BLASSO for mixture models of \citet{de2021supermix}, where the data $y$ is built from the empirical measure of a sample of the mixture and $\calH$ is taken to be a reproducing kernel Hilbert space, so that this empirical measure can be compared with the prediction. The present paper covers the other data-fidelity terms of Table~\ref{tab:common_losses} as well, in particular the logistic loss of Section~\ref{sec:datafidelity_FYL_app:logistic}, for which the same duality gap, certificates and error bounds hold.
\end{example}

\noindent
We also need to consider the following functions.
\begin{de}[Set $\calF_0$]
\label{def_assumption:fidelity}
    Given $(\calH,\calB,\Phi)\in\calG_0$, we define $\calF_0$ as the set of pairs $(F,R)$ where $F\,:\,\calH\to \mathbb{R}$ is a \emph{continuously differentiable strictly convex} function such that there exist~$a>0$ and $b\in\mathbb R$ with
\begin{equation}
    \label{eq:hyp_coercivite_faible}
    \forall h\in\calH\,,\quad F(h)\geq a\|h\|_{\calH}+b\,;
\end{equation}
    $R\,:\,\calB^\star\to\mathbb{R}\cup\{+\infty\}$ is a proper convex and \emph{weak-$*$ lower semi-continuous} function $(\sigma(\calB^\star,\calB)$-l.s.c., equivalently $R=R^{\star\star}$ with the conjugate $R^\star$ taken on the pre-dual~$\calB)$, referred to as the \emph{regularizer}. The function $F$ is referred to as the \emph{data-fidelity term}. We also write $F\in\calF_0$ when $F$ alone satisfies the conditions above on the data-fidelity side.
\end{de}

\noindent
The lower bound~\eqref{eq:hyp_coercivite_faible} is a weak-coercivity condition: it makes $F$ bounded below, so that the objective is bounded below and, together with the coercivity supplied by the regularizer, the $F+\lambda R$ program admits a minimizer. The norm-type regularizers in Section~\ref{sec:applications} (total variation, group/joint TV, nuclear norm) are weak-$*$ lower semi-continuous, as required in Definition~\ref{def_assumption:fidelity}, being suprema of weak-$*$ continuous linear functionals $($e.g.\ $\|\mu\|_{\TV}=\sup_{\|f\|_\infty\le1}\langle f,\mu\rangle)$; the constrained regularizers (analysis sparsity, TV of the gradient) are weak-$*$ l.s.c.\ provided the underlying constraint set is weak-$*$ closed, see the standing hypothesis in Section~\ref{subsubsec:analysis_sparsity}.

\medskip

Now, consider the following convex program on the dual space $\calB^\star$
\begin{equation}
    \label{eq:convex_program_J}
    \inf_{\mu\in\calB^\star}\Big\{
    \objmeas(\mu)\coloneqq F(\Phi\mu)+\lambda R(\mu)
    \Big\}\,,
\end{equation}
where $\objmeas\,:\,\calB^\star\to \mathbb{R}\cup\{+\infty\}$ is the objective function.

\begin{subequations}
\label{eq:Fenchel_duals}
\paragraph{Legendre--Fenchel duality.} We define the \emph{Legendre--Fenchel conjugates} of~$F$ and $R$ as follows: for all $h\in\mathcal{H}$ and all $f\in\calB$,
\begin{align}
    F^\star(h)
        &\coloneqq \sup_{h'\in\calH}
    \big\{
        \langle h,h'\rangle_{\calH}-F(h')
    \big\}\in\mathbb{R}\cup\{+\infty\}
    \\
    R^\star(f)
        &\coloneqq \sup_{\mu\in\calB^\star}
    \big\{
        \langle f,\mu\rangle_{\calB,\calB^\star}-R(\mu)
    \big\}\in\mathbb{R}\cup\{+\infty\}
\end{align}
where $R^\star$ is only given on the pre-dual space $\calB$, identified with a weak-$*$ dense subset of the bidual $\calB^{\star\star}$.
\end{subequations}

\begin{subequations}
\label{eq:Fenchel_Young_loss}
\paragraph{Fenchel--Young losses.}
We also consider the \emph{Fenchel--Young losses}~\citep{blondel2019learning}, defined as follows: for all $h,h'\in\calH$ and all $(f,\mu)\in\calB\times\calB^\star$,
\begin{align}
    L_F(h'\parallel h)
        &\coloneqq F(h')+F^\star(h)-\langle h,h'\rangle_{\calH}
    \in\mathbb{R}_+\cup\{+\infty\}\,,
    \\
    L_R(\mu\parallel f)
        &\coloneqq R(\mu)+R^\star(f)-\langle f,\mu\rangle_{\calB,\calB^\star}
    \in\mathbb{R}_+\cup\{+\infty\}\,.
    \label{eq:LR_def}
\end{align}
\end{subequations}
Fenchel--Young losses can be understood as a generalization with primal--dual variables of the more common concept of Bregman divergence, written instead on primal--primal variables. We now recall this second notion and state the identity that links the two.

\begin{subequations}
\label{eq:Bregman_identities_primal_and_dual}
\paragraph{Bregman divergences.} Given a convex Gâteaux-differentiable function $G: \calH \to \mathbb R$, its Bregman divergence is, for all $x,y\in\calH$,
\[
    G(x \mid y)\;\coloneqq \; G(x) - G(y) - \langle \nabla G(y),\, x-y \rangle_\calH \;\in\; \mathbb R_+.
\]
For $F$ and its convex conjugate $F^\star$, we use for all $u,v\in\calH$ and all $h\in\calH$, $h'\in\operatorname{int}(\dom(F^\star))$,
\begin{align}
    F(u \mid v)
        &\coloneqq  F(u) - F(v) - \langle \nabla F(v),\, u-v \rangle_\calH\;\in\; \mathbb R_+, \\
    F^\star(h \mid h')
        &\coloneqq  F^\star(h) - F^\star(h') - \langle \nabla F^\star(h'),\, h-h' \rangle_\calH\;\in\; \mathbb R_+ \cup\{+\infty\}.
\end{align}
The function $F$ is assumed differentiable and strictly convex so that $F^\star$ is also differentiable on $\operatorname{int}(\dom(F^\star))$. Indeed, strict convexity of $F$ ensures that the supremum defining~$F^\star(h)$ admits at most one maximizer. On $\operatorname{int}(\dom(F^\star))$, where subgradients are guaranteed to exist, this uniqueness implies that $F^\star$ is Gâteaux differentiable; see \citet{zalinescu2002convex} for the infinite-dimensional Hilbert setting. The two notions then agree, for all $u\in\calH$ and all $h\in\operatorname{int}(\dom(F^\star))$,
\begin{equation}
    \label{eq:FY_equals_Bregman}
    F(u \mid \nabla F^\star(h)) = F^\star(h \mid \nabla F(u)) = L_F(u \parallel h)\,,
\end{equation}
which is Lemma~\ref{lem:bregman_identity}. Fenchel--Young losses have the advantage over Bregman divergences of not requiring differentiability of the functional.
\end{subequations}

\subsection{Results on duality and optimality gaps}
The \emph{dual program} of~\eqref{eq:convex_program_J} (strictly speaking its Fenchel \emph{pre-dual}, since $\calB^\star$ is itself a dual space) reads
\begin{equation}
    \label{eq:dual_program}
    \sup_{h\in\calH}\Big\{\
    \objdual(h)\coloneqq -F^\star(h)-\lambda R^\star\Big(-\frac{\Phi^\star h}{\lambda}\Big)
    \Big\}\,,
\end{equation}
where $\objdual\,:\,\calH\to \mathbb{R}\cup\{-\infty\}$ is the objective function.

\paragraph{Duality-gap decomposition.} \Cref{thm:dual_gap} below expresses the duality gap in terms of Fenchel--Young losses. 

\begin{theo}[Fenchel--Young losses and duality gap]
\label{thm:dual_gap}
Let $(\calH,\calB,\Phi)\in\calG_0$, let $F\,:\,\calH\to\mathbb{R}\cup\{+\infty\}$ be proper, l.s.c.\ convex, let $R\,:\,\calB^\star\to\mathbb{R}\cup\{+\infty\}$ be proper convex weak-$*$ l.s.c., and let $\lambda>0$. Fix $\mu\in\dom(R)\subset\calB^\star$ with $\Phi\mu\in\dom(F)$, and $h\in\dom(F^\star)\subset\calH$. Then the \emph{duality gap}~$\Delta(\mu,h)$ satisfies
\begin{equation}
    \label{eq_thm:dual_gap}
\Delta(\mu,h)\;\coloneqq \; \objmeas(\mu)-\objdual(h)
=
L_F(\Phi\mu \parallel h)
    \;+\;\lambda\,
    L_R(\mu \parallel \eta)\;\in\;\mathbb{R}_+\cup\{+\infty\}\,.
\end{equation}
where $\eta\coloneqq -\Phi^\star h/\lambda$.
\end{theo}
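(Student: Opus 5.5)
The identity follows by expanding both objectives and adding and subtracting the two pairing terms that make up the Fenchel--Young losses. Write
\[
\Delta(\mu,h)=F(\Phi\mu)+\lambda R(\mu)+F^\star(h)+\lambda R^\star(\eta),\qquad \eta=-\Phi^\star h/\lambda .
\]
By Lemma~\ref{lem:dual_Phi_banach}, $\eta\in\calB$, so $R^\star(\eta)$ and the pairing $\langle \eta,\mu\rangle_{\calB,\calB^\star}$ are well defined. The pre-adjoint relation reads $\langle \Phi^\star h,\mu\rangle_{\calB,\calB^\star}=\langle h,\Phi\mu\rangle_{\calH}$, and it gives
\[
\lambda\langle \eta,\mu\rangle_{\calB,\calB^\star}=-\langle h,\Phi\mu\rangle_{\calH}.
\]
Adding $0=-\langle h,\Phi\mu\rangle_\calH-\lambda\langle\eta,\mu\rangle$ to the expression for $\Delta$ and regrouping yields
\[
\Delta=\big[F(\Phi\mu)+F^\star(h)-\langle h,\Phi\mu\rangle_\calH\big]+\lambda\big[R(\mu)+R^\star(\eta)-\langle\eta,\mu\rangle\big]=L_F(\Phi\mu\parallel h)+\lambda L_R(\mu\parallel\eta).
\]

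The main care is needed in justifying these manipulations with extended-real values. The hypotheses $\mu\in\dom R$, $\Phi\mu\in\dom F$ and $h\in\dom F^\star$ make $F(\Phi\mu)$, $R(\mu)$ and $F^\star(h)$ finite. Properness of $F$ and $R$ gives $F^\star>-\infty$ and $R^\star>-\infty$. The only quantity that can be infinite is $R^\star(\eta)\in\R\cup\{+\infty\}$, and no expression of the form $\infty-\infty$ arises. If $R^\star(\eta)=+\infty$, then $J_{\calH}(h)=-\infty$, so both sides equal $+\infty$ and the identity holds. Otherwise every term is finite and the algebra above is legitimate.

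It remains to show that $\Delta\ge 0$. Nonnegativity of each loss is the Fenchel--Young inequality. For $L_F$ it follows from the definition of $F^\star$ by taking $h'=\Phi\mu$ in the supremum. For $L_R$ it follows by taking the point $\mu$ in the supremum defining $R^\star(\eta)$. Weak-$*$ lower semicontinuity of $R$ is not needed for the identity itself; it only guarantees $R=R^{\star\star}$ for later use.
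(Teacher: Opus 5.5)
Your proposal is correct and follows essentially the same route as the paper: add and subtract the pairings using the pre-adjoint relation $\lambda\langle\eta,\mu\rangle_{\calB,\calB^\star}=-\langle h,\Phi\mu\rangle_\calH$ (Lemma~\ref{lem:dual_Phi_banach}), and treat separately the case $R^\star(\eta)=+\infty$, where both sides equal $+\infty$. The paper additionally checks that $\objdual$ is the Fenchel--Rockafellar pre-dual of $\objmeas$, which the identity itself does not need, while your extended-real bookkeeping (finiteness of $F^\star(h)$ from properness of $F$) is slightly more explicit than the paper's.
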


\noindent
When $(F,R)\in\calF_0$ the requirement $\Phi\mu\in\dom(F)$ is automatic, since $\dom(F)=\calH$. Note that the gap is finite (and the identity holds in $\R_+$) precisely when $\eta\in\dom(R^\star)$, equivalently $h\in\dom(\objdual)$; otherwise $L_R(\mu\parallel\eta)=+\infty$, $\objdual(h)=-\infty$, and both sides equal $+\infty$.

\begin{proof}
First, we verify that the functional $\objdual$ defined in \eqref{eq:dual_program} corresponds to the Fenchel pre-dual of the primal problem \eqref{eq:convex_program_J}. We write the Fenchel--Rockafellar dual (see \citealp[Section~2.8]{zalinescu2002convex}) of the objective~\eqref{eq:convex_program_J} rewritten as $\inf_{\mu}\{ F(\Phi \mu) + g_\lambda(\mu)\}$, where $g_\lambda(\mu) \coloneqq \lambda R(\mu)$:
\begin{equation}
\notag
    \sup_{h \in \calH} \big\{-F^\star(h) - g_\lambda^\star(-\Phi^\star h) \big\}.
\end{equation}
We compute the conjugate of the scaled regularizer $g_\lambda(\mu) = \lambda R(\mu)$, giving $g_\lambda^{\star}(v)= \lambda R^{\star}({v}/{\lambda})$; substituting into the dual formulation, we match exactly the definition of $\objdual(h)$ in~\eqref{eq:dual_program}. Thus, $\objdual$ is the canonical (pre-)dual.\footnote{One can start from~\eqref{eq:dual_program} and, by the same algebraic manipulations on the Legendre--Fenchel transform together with $F=F^{\star\star}$ and $R=R^{\star\star}$ (weak-$*$ l.s.c.), deduce that~\eqref{eq:convex_program_J} is the dual problem to \eqref{eq:dual_program}. For the sake of readability, we present its algebraic proof the other way around using implicitly that $\calB\hookrightarrow\calB^{\star\star}$.}

\medskip\noindent
If $\eta=-\Phi^\star h/\lambda\notin\dom(R^\star)$, then $R^\star(\eta)=+\infty$, hence $L_R(\mu\parallel\eta)=+\infty$ and $\objdual(h)=-\infty$, so $\Delta(\mu,h)=+\infty$ matches the right-hand side and the identity holds in $\R_+\cup\{+\infty\}$. Assume henceforth $\eta\in\dom(R^\star)$, so all terms below are finite. We add and subtract the same terms to recognize the Fenchel--Young losses and we use Lemma~\ref{lem:dual_Phi_banach}:
    \begin{align*}
        \objmeas(\mu)&=F(\Phi\mu)+\lambda R(\mu)\\
        &=F(\Phi\mu)+F^\star(h)-\langle h,\Phi\mu\rangle_\calH
        +\lambda \big(R(\mu) +R^\star(\eta)-\langle \eta,\mu\rangle_{\calB,\calB^\star}\big)
        -F^\star(h)-\lambda R^\star(\eta)\\
        &=L_F(\Phi\mu \parallel h)+\lambda L_R(\mu\parallel\eta)+\objdual(h)\,,
    \end{align*}
as claimed, where $\eta=-\Phi^\star h/\lambda$.
\end{proof}

\begin{remark}[Scope of the gap identity]\label{rem:gap_identity_scope}
The gap identity of Theorem~\ref{thm:dual_gap} uses none of the differentiability, strict convexity or coercivity of~$F$: the proof only adds and subtracts conjugate terms. Section~\ref{sec:applications} uses this generality for the losses outside~$\calF_0$ (logistic, Huber, squared hinge, linear-link KL).
\end{remark}

The identity~\eqref{eq_thm:dual_gap} has direct consequences. By weak duality, it provides an upper bound on the optimality gap of $\objmeas$: for all $\mu\in\dom(R)$ with $\Phi\mu\in\dom(F)$ and all $h\in\dom(F^\star)$,
\begin{equation}\label{eq:opt_gap_bound}
\begin{aligned}
0\leq \objmeas(\mu)-\inf_{\nu\in\calB^\star}\objmeas(\nu)
&\leq\inf_{h'\in\calH}\Delta(\mu,h')\leq \Delta(\mu,h)
\stackrel{\eqref{eq_thm:dual_gap}}{=}
    L_F(\Phi\mu \parallel h)+\lambda\,
    L_R(\mu \parallel \eta)\,.
\end{aligned}
\end{equation}
Since the duality gap \eqref{eq_thm:dual_gap} is the sum of two non-negative Fenchel--Young losses, driving it to zero requires both terms to vanish. By a standard result recalled in Lemma~\ref{lem:fy_properties}, the condition $L_R(\mu \parallel \eta) = 0$ is equivalent to the subdifferential inclusion $\eta \in \partial R(\mu)$. In the context of inverse problems, this inclusion governs the structural properties of the solution, such as the sparsity promoted by the total variation norm. This motivates the following definition of the source condition.
\begin{de}[Dual certificate/source condition]
    \label{def:source_cnd}
    Let $(\calH,\calB,\Phi)\in\calG_0$, $R\,:\,\calB^\star\to\mathbb{R}\cup\{+\infty\}$ a proper convex weak-$*$ l.s.c.\ function, $\lambda>0$, and $\mu\in\calB^\star$. We say that $\eta\in\calB$ is a \emph{dual certificate of $\mu$ for $(\calH,\lambda,R,\Phi)$} if it satisfies the \emph{source condition at point $\mu$}: 
     there exists $h\in\calH$ such that 
    \begin{equation}
        \label{def:dual_certificate_definition}
        \eta = -\frac{\Phi^\star h}{\lambda}\in\partial R(\mu)\, ,
    \end{equation}
    where $\partial R$ denotes the subdifferential of~$R$, understood through the pairing $(\calB,\calB^\star)$: we only consider subgradients lying in~$\calB$.
\end{de}
\noindent
We can state the following theorem.
\begin{theo}[First-order optimality and Mirror Alignment]
\label{thm:robust_align_first_order}
Let $(\calH,\calB,\Phi)\in\calG_0$, $(F,R)\in\calF_0$ and $\lambda>0$.
Let $\mu\in\calB^\star$ and let $h\in\calH$ be such that the source condition \eqref{def:dual_certificate_definition} holds. Then
\[
0\le \objmeas(\mu)-\inf_{\nu\in\calB^\star}\objmeas(\nu)\leq L_F(\Phi\mu \parallel h)\,,
\]
and, when $h\in\operatorname{int}(\dom F^\star)$, 
$L_F(\Phi\mu \parallel h)=F(\Phi\mu \mid \nabla F^\star(h))=F^\star(h \mid \nabla F(\Phi\mu))$.
If additionally $h\in\calH$ satisfies \emph{one of these equivalent conditions}
\begin{subequations}
    \label{eq:mirror_alignment_dual_certificate}
\begin{align}
    h&=\nabla F(\Phi\mu)\\
    \Phi\mu&=\nabla F^\star(h)
\end{align}
\end{subequations}
then $\mu$ is a solution to \eqref{eq:convex_program_J} and $h$ a solution to \eqref{eq:dual_program}. In~\eqref{eq:mirror_alignment_dual_certificate}, $\nabla F^\star(h)$ denotes the unique subgradient of $F^\star$ at $h$: by strict convexity of $F$, $\partial F^\star(h)$ contains at most one element, and it is nonempty whenever $h=\nabla F(\Phi\mu)$.

\emph{The converse is true:} If $\mu$ is a solution to \eqref{eq:convex_program_J}, then there exists a solution $h\in\calH$ to \eqref{eq:dual_program} such that~\eqref{def:dual_certificate_definition} holds
and \eqref{eq:mirror_alignment_dual_certificate} holds $($\emph{Mirror Alignment}$)$.
\end{theo}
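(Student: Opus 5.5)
The forward direction follows directly from Theorem~\ref{thm:dual_gap}. Under the source condition $\eta=-\Phi^\star h/\lambda\in\partial R(\mu)$, Lemma~\ref{lem:fy_properties} gives $L_R(\mu\parallel\eta)=0$, and in particular $\eta\in\dom(R^\star)$ and $\mu\in\dom(R)$. If $h\notin\dom F^\star$, then $L_F(\Phi\mu\parallel h)=+\infty$ and the bound is trivial. Otherwise the identity \eqref{eq_thm:dual_gap} together with weak duality \eqref{eq:opt_gap_bound} yields $0\le \objmeas(\mu)-\inf\objmeas\le\Delta(\mu,h)=L_F(\Phi\mu\parallel h)$. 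For $h\in\operatorname{int}(\dom F^\star)$, the Bregman expressions are exactly \eqref{eq:FY_equals_Bregman}, i.e.\ Lemma~\ref{lem:bregman_identity}.

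Next we treat Mirror Alignment. The equivalence of the two conditions is the Fenchel--Young equality case: $h\in\partial F(\Phi\mu)=\{\nabla F(\Phi\mu)\}$ holds iff $\Phi\mu\in\partial F^\star(h)$, which uses $F=F^{\star\star}$ since $F$ is convex and continuous. Strict convexity of $F$ makes $\partial F^\star(h)$ at most a singleton, because any element is a maximizer of $h'\mapsto\langle h,h'\rangle-F(h')$. Either condition gives $L_F(\Phi\mu\parallel h)=0$, hence $\Delta(\mu,h)=0$. Then $\objmeas(\mu)=\objdual(h)$, and weak duality $\objdual\le\inf\objmeas$ shows that $\mu$ is primal optimal and $h$ is dual optimal simultaneously.

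For the converse, take a minimizer $\mu$ and set $h\coloneqq\nabla F(\Phi\mu)$, so that Mirror Alignment holds by construction. It remains to prove $-\Phi^\star h\in\lambda\partial R(\mu)$, i.e.\ the subdifferential sum rule $\partial(F\circ\Phi+\lambda R)(\mu)=\Phi^\star\nabla F(\Phi\mu)+\lambda\partial R(\mu)$ with subgradients taken in $\calB$.

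This sum rule is the main obstacle, and I would avoid it by a direct variational argument. By optimality, for every $\nu\in\dom R$ and $t\in(0,1]$,
\[
\frac{F(\Phi\mu+t\Phi(\nu-\mu))-F(\Phi\mu)}{t}+\lambda\frac{R(\mu+t(\nu-\mu))-R(\mu)}{t}\ge0 .
\]
Convexity bounds the $R$ quotient by $R(\nu)-R(\mu)$. Letting $t\to0$ and using Gâteaux differentiability of $F$, the first quotient tends to $\langle \nabla F(\Phi\mu),\Phi(\nu-\mu)\rangle_\calH=\langle\Phi^\star h,\nu-\mu\rangle_{\calB,\calB^\star}$, by Lemma~\ref{lem:dual_Phi_banach}. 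Hence
\[
\lambda R(\nu)\ge\lambda R(\mu)+\langle-\Phi^\star h,\nu-\mu\rangle \quad\text{for all } \nu,
\]
so $\eta=-\Phi^\star h/\lambda\in\calB$ lies in $\partial R(\mu)$, which is exactly the source condition. Continuity of $F$ is what lets the limit pass without any qualification condition, and $\Phi^\star h\in\calB$ ensures the subgradient sits in the pre-dual as Definition~\ref{def:source_cnd} requires. Finally, we have both the source condition and Mirror Alignment at $(\mu,h)$, so $\Delta(\mu,h)=0$, and the forward part shows $h$ solves \eqref{eq:dual_program}.
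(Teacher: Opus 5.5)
Your proof is correct. The forward part (source condition gives $L_R(\mu\parallel\eta)=0$, then the gap identity, weak duality and the zero-loss equivalence for $L_F$) is essentially the paper's argument.

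The converse takes a genuinely different route. The paper invokes the Fenchel--Rockafellar duality theorem, with the qualification that $F$ is finite and continuous at some $\Phi\mu_0$ with $\mu_0\in\dom R$, to get strong duality and attainment of the dual supremum. It then takes a dual maximizer $h$ with $\Delta(\mu,h)=0$ and reads off both Mirror Alignment and the source condition from the gap identity, since both Fenchel--Young losses must vanish.

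You instead fix the explicit candidate $h=\nabla F(\Phi\mu)$ and prove the source condition directly. You use the one-sided difference quotient along $\mu+t(\nu-\mu)$, which is the elementary ``smooth plus convex'' sum rule and needs no constraint qualification. Zero gap, and hence dual optimality, then follows from your forward part. (Strictly, it is the G\^ateaux differentiability of $F$ at $\Phi\mu$, rather than its continuity, that lets you pass to the limit.)

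What your route buys:
\begin{itemize}
\item It is self-contained and constructive.
\item It identifies the dual solution explicitly as $\nabla F(\Phi\mu)$.
\item It avoids the topological bookkeeping hidden in the black-box theorem: which topology on $\calB^\star$ makes $\Phi$ continuous while $F$ stays continuous at a point of $\Phi(\dom R)$, and why the resulting adjoint lands in the pre-dual $\calB$.
\end{itemize}

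What it costs is generality:
\begin{itemize}
\item It relies essentially on differentiability of $F$.
\item It delivers strong duality only when a primal minimizer exists.
\end{itemize}
The paper's route gives strong duality and dual attainment unconditionally. It would also survive for a merely continuous convex $F$, producing $h\in\partial F(\Phi\mu)$, with differentiability used only at the last step to identify $h=\nabla F(\Phi\mu)$.
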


\begin{proof}
Set $\eta\coloneqq-\Phi^\star h/\lambda$. By the source condition $\eta\in\partial R(\mu)$, so $L_R(\mu\parallel\eta)=0$ (Lemma~\ref{lem:fy_properties}), and the gap decomposition of Theorem~\ref{thm:dual_gap} gives $\Delta(\mu,h)=L_F(\Phi\mu\parallel h)$. By weak duality, $0\le\objmeas(\mu)-\inf_{\nu\in\calB^\star}\objmeas(\nu)\le\Delta(\mu,h)=L_F(\Phi\mu\parallel h)$, which is the first inequality. Moreover, by the zero-loss equivalence of Fenchel--Young losses (Lemma~\ref{lem:fy_properties}), $L_F(\Phi\mu\parallel h)=0$ iff $h=\nabla F(\Phi\mu)$ (Mirror Alignment) and $L_R(\mu\parallel\eta)=0$ iff $\eta\in\partial R(\mu)$ (the source condition); the two conditions together are therefore equivalent to $\Delta(\mu,h)=0$, i.e.\ to optimality of $(\mu,h)$. 

Conversely, suppose $\mu$ is a solution to~\eqref{eq:convex_program_J}. Since $(F,R)\in\calF_0$, the fidelity $F$ is finite and continuous on $\calH$ and $\dom R\neq\varnothing$, so $F$ is continuous at $\Phi\mu_0$ for some $\mu_0\in\dom R$ and the qualification of the Fenchel--Rockafellar duality theorem~\citep[Corollary~2.8.5]{zalinescu2002convex} holds; it yields strong duality together with attainment of the dual supremum in~\eqref{eq:dual_program} (the weak-coercivity bound~\eqref{eq:hyp_coercivite_faible} is not used here). Hence there is $h\in\calH$ with $\Delta(\mu,h)=0$.

By the gap identity of Theorem~\ref{thm:dual_gap} and the non-negativity of the Fenchel--Young losses, $0=\Delta(\mu,h)=L_F(\Phi\mu\parallel h)+\lambda L_R(\mu\parallel\eta)$ with $\eta=-\Phi^\star h/\lambda$ forces $L_F(\Phi\mu\parallel h)=0$ and $L_R(\mu\parallel\eta)=0$; the zero-loss equivalences used above then give Mirror Alignment $h=\nabla F(\Phi\mu)$ and the source condition $\eta\in\partial R(\mu)$, so $h$ solves~\eqref{eq:dual_program}.
\end{proof}

\begin{remark}[Mirror Alignment]
The optimality conditions of Theorem~\ref{thm:robust_align_first_order}, $L_R=0$ $($the source condition~\eqref{def:dual_certificate_definition}$)$ and $L_F=0$ $($Mirror Alignment~\eqref{eq:mirror_alignment_dual_certificate}$)$, recover the classical inverse-problem conditions \citep{bredies2013inverse}: the source condition is the subgradient inclusion $-\Phi^{\star} h / \lambda \in \partial R(\mu)$. Our formulation highlights \emph{Mirror Alignment} $h = \nabla F(\Phi \mu)$ $($equivalently $\Phi\mu = \nabla F^{\star}(h))$: whereas first-order conditions present the primal--dual relation algebraically $($linearly in the quadratic case$)$, Mirror Alignment reads it as the Legendre--Fenchel conjugacy induced by~$F$. This underlies the Fenchel--Young error bounds of Theorem~\ref{thm:FY_discrepancy_error_bounds} below, since $L_F$ coincides with a Bregman divergence of~$F$ under Mirror Alignment (Lemma~\ref{lem:bregman_identity}).
\end{remark}

The source condition~\eqref{def:dual_certificate_definition} is a sufficient condition for the Fenchel--Young loss of~$R$ to coincide with its \emph{generalized Bregman divergence} at the certified point, 
\[
    R_{\eta^\star\!}(\mu \mid \mu^\star) \coloneqq R(\mu)-R(\mu^\star)-\langle \eta^\star,\mu-\mu^\star\rangle_{\calB,\calB^\star}
\]
for $\eta^\star\in\partial R(\mu^\star)$, written $D_R$ in \citet{burger2004convergence}. The converse direction (Br\o ndsted--Rockafellar, Theorem~\ref{thm:algorithmic_stability}) shows that an $\epsilon$-bounded $L_R$ already yields a nearby exact certificate.

\begin{prop}[Regularizer Fenchel--Young loss equals the Bregman divergence under the source condition]
\label{prop:FYL_source_Bregman}
    Let $(\calH,\calB,\Phi)\in\calG_0$, $R\,:\,\calB^\star\to\mathbb{R}\cup\{+\infty\}$ a proper convex weak-$*$ l.s.c.\ function, $\lambda>0$, $\mu^\star\in\calB^\star$ and $\eta^\star\in\calB$ a \emph{dual certificate} of $\mu^\star$ for $(\calH,\lambda,R,\Phi)$ $($i.e.\ \eqref{def:dual_certificate_definition} holds at $\mu^\star)$. Then
    \begin{equation}
        \label{eq:FYL_reg_source_Bregman}
        \forall \mu\in\calB^\star\,,\qquad
        L_R(\mu \parallel \eta^\star) = R_{\eta^\star\!}(\mu \mid \mu^\star)
        \,.
    \end{equation}
\end{prop}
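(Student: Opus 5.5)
The proof is a direct algebraic verification that turns on the Fenchel--Young equality case at the certified point. The source condition at $\mu^\star$ says $\eta^\star\in\partial R(\mu^\star)$, with $\eta^\star\in\calB$ and the subgradient understood through the pairing $(\calB,\calB^\star)$. By the zero-loss equivalence of Lemma~\ref{lem:fy_properties}, this is the same as $L_R(\mu^\star\parallel\eta^\star)=0$. Writing out the definition~\eqref{eq:LR_def}, the equality case gives
\[
R^\star(\eta^\star)=\langle \eta^\star,\mu^\star\rangle_{\calB,\calB^\star}-R(\mu^\star).
\]
Since $\mu^\star\in\dom(R)$ (a nonempty subdifferential forces $R(\mu^\star)<+\infty$, and $R$ is proper), this value is finite.

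The plan is then to substitute this expression for $R^\star(\eta^\star)$ into the definition of $L_R(\mu\parallel\eta^\star)$ for an arbitrary $\mu\in\calB^\star$. This gives
\[
L_R(\mu\parallel\eta^\star)=R(\mu)+\langle\eta^\star,\mu^\star\rangle_{\calB,\calB^\star}-R(\mu^\star)-\langle\eta^\star,\mu\rangle_{\calB,\calB^\star}.
\]
Regrouping the two pairings by bilinearity yields $R(\mu)-R(\mu^\star)-\langle\eta^\star,\mu-\mu^\star\rangle_{\calB,\calB^\star}$, which is exactly $R_{\eta^\star}(\mu\mid\mu^\star)$.

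The only delicate point is handling $\mu\notin\dom(R)$. In that case $R(\mu)=+\infty$. On the left, $R^\star(\eta^\star)$ and the pairing are finite, so $L_R(\mu\parallel\eta^\star)=+\infty$. On the right, $R(\mu^\star)$ and $\langle\eta^\star,\mu-\mu^\star\rangle$ are finite, so the right-hand side is $+\infty$ as well. Both sides therefore agree in $\mathbb{R}_+\cup\{+\infty\}$, and no $\infty-\infty$ ambiguity arises because all subtracted terms are finite.

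I do not expect a real obstacle. The one thing to state explicitly is that the subdifferential is taken with subgradients in $\calB$, so that the pairing defining $R^\star$ on $\calB$ is the one used in the equality case; this is guaranteed by Definition~\ref{def:source_cnd}. The element $h$ and the specific form $\eta^\star=-\Phi^\star h/\lambda$ play no role beyond ensuring $\eta^\star\in\partial R(\mu^\star)$.
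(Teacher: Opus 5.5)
Your proposal is correct and matches the paper's proof: the zero-loss equivalence of Lemma~\ref{lem:fy_properties} gives $R^\star(\eta^\star)=\langle\eta^\star,\mu^\star\rangle_{\calB,\calB^\star}-R(\mu^\star)$, which you substitute into $L_R(\mu\parallel\eta^\star)$ and regroup into the Bregman form. Your separate check of the case $R(\mu)=+\infty$, where both sides equal $+\infty$, is a small addition that the paper leaves implicit.
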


\begin{proof}
    By the source condition, $\eta^\star\in\partial R(\mu^\star)$, so $L_R(\mu^\star\parallel\eta^\star)=0$ (see Lemma~\ref{lem:fy_properties}), i.e.\ $R^\star(\eta^\star)=\langle\eta^\star,\mu^\star\rangle_{\calB,\calB^\star}-R(\mu^\star)$. Substituting into the Fenchel--Young loss~\eqref{eq:LR_def},
    \[
        L_R(\mu\parallel\eta^\star)=R(\mu)+R^\star(\eta^\star)-\langle\eta^\star,\mu\rangle_{\calB,\calB^\star}
        =R(\mu)-R(\mu^\star)-\langle\eta^\star,\mu-\mu^\star\rangle_{\calB,\calB^\star}=R_{\eta^\star\!}(\mu\mid\mu^\star),
    \]
    the generalized Bregman divergence, which is~\eqref{eq:FYL_reg_source_Bregman}.
\end{proof}

\subsection{Error bounds}
\label{subsec:error_bounds}

We now turn the duality-gap identity into a priori error bounds. Fix $\mu^\star\in\dom(R)$ and consider \emph{any} $h^\star\in\dom(F^\star)$ satisfying the source condition~\eqref{def:dual_certificate_definition} at $\mu^\star$, that is 
\begin{align*}
\text{Certificate:}\qquad & \eta^\star\coloneqq-\frac{\Phi^\star h^\star}{\lambda}\in\partial R(\mu^\star)\,. \\
\intertext{The message of this subsection is that a single scalar, given by}
\text{Error upper bound:}\qquad & L_F(\Phi\mu^\star\parallel h^\star)\,, \\
\intertext{controls the recovery errors of any near minimizer $\mu$ (making an improvement over $\mu^\star$) in the sense that}
\text{Set of near minimizers:}\qquad & \objmeas(\mu)\leq\objmeas(\mu^\star)\,.
\end{align*}
Theorem~\ref{thm:FY_discrepancy_error_bounds} states an exact identity, the resulting bound, and its split into an estimation part and a prediction part.

\begin{subequations}
\begin{theo}[Fenchel--Young error bounds]
\label{thm:FY_discrepancy_error_bounds}
    Let $(\calH,\calB,\Phi)\in\calG_0$, let $F\,:\,\calH\to\mathbb{R}\cup\{+\infty\}$ be proper, lower semi-continuous and convex, let $R\,:\,\calB^\star\to\mathbb{R}\cup\{+\infty\}$ be proper convex and weak-$*$ l.s.c., and $\lambda>0$. Let $\mu^\star\in\dom(R)$ and $h^\star\in\dom(F^\star)$ satisfy the source condition~\eqref{def:dual_certificate_definition} at $\mu^\star$, with $\objmeas(\mu^\star)<+\infty$, and set $\eta^\star\coloneqq-\Phi^\star h^\star/\lambda$. Then, for all $\mu\in\dom(R)$ with $\Phi\mu\in\dom(F)$, the following holds.

\noindent $\bullet$ One has 
    \begin{equation}\label{eq:obj_decomposition}
        \objmeas(\mu^\star)-\objmeas(\mu)=L_F(\Phi \mu^\star \parallel  h^\star) - L_F(\Phi \mu\parallel h^\star)-\lambda\, L_R(\mu \parallel \eta^\star)
        \,,
    \end{equation}
where $L_R(\mu \parallel \eta^\star)$ equals the Bregman divergence $R_{\eta^\star\!}(\mu \mid \mu^\star)$.

\noindent $\bullet$ If moreover $\objmeas(\mu)\leq\objmeas(\mu^\star)$, then the \emph{Fenchel--Young error bounds} hold:
\begin{equation}
\label{eq:obj_decomposition_bound}
0\leq
L_F(\Phi \mu \parallel h^\star) +\lambda\,
L_R(\mu \parallel \eta^\star)
\leq L_F(\Phi \mu^\star \parallel h^\star)
\,.
\end{equation}
\noindent $\bullet$ 
In particular for $\objmeas(\mu)\leq\objmeas(\mu^\star)$, as $\max\{\lambda L_R(\mu\parallel\eta^\star),\,L_F(\Phi\mu\parallel h^\star)\}\le L_F(\Phi\mu^\star\parallel h^\star)$, the two bounds below hold and are controlled simultaneously: the estimation bound $($on $\mu)$
\begin{equation}
\label{eq:obj_decomposition_estimation_bound}
0\leq\lambda L_R( \mu \parallel\eta^\star)\leq
L_F(\Phi \mu^\star \parallel h^\star)
\end{equation}
and the prediction bound $($on $\Phi\mu)$
\begin{equation}
\label{eq:obj_decomposition_prediction_bound}
0\leq L_F(\Phi \mu \parallel h^\star) \leq L_F(\Phi \mu^\star \parallel h^\star).
\end{equation}

\noindent $\bullet$
Finally, for any $\tau>0$,
\begin{equation}\label{eq:obj_decomposition_bound_Carlier}
    \frac{1}{2 \tau} \dist\nolimits_{\calH\times \calH}((\Phi\mu^\star,\tau h^\star),\graph(\tau\partial F))^2
    \le
    \frac{1}{\tau} \|\Phi \mu^\star-(\Id +\tau \partial F)^{-1}(\Phi \mu^\star +\tau h^\star)\|_\calH^2 
    \le L_F(\Phi \mu^\star \parallel h^\star)\,,
\end{equation}
where $\graph(\tau\partial F)$ denotes the graph in $\calH\times \calH$ of $\tau\partial F$, and $(\Id+\tau\partial F)^{-1}$ is the single-valued resolvent of the maximal monotone $\partial F$; for a smooth $F$, $\partial F=\{\nabla F\}$.
\end{theo}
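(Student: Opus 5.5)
The plan is to apply the gap identity of Theorem~\ref{thm:dual_gap} twice, with the same dual point $h^\star$, and subtract; the dual objective $\objdual(h^\star)$ then cancels. First, $\eta^\star\in\partial R(\mu^\star)$ and $\objmeas(\mu^\star)<+\infty$ give $L_R(\mu^\star\parallel\eta^\star)=0$ by Lemma~\ref{lem:fy_properties}. In particular $\eta^\star\in\dom R^\star$, so $\objdual(h^\star)$ is finite. Theorem~\ref{thm:dual_gap} at $(\mu^\star,h^\star)$ then reads $\objmeas(\mu^\star)-\objdual(h^\star)=L_F(\Phi\mu^\star\parallel h^\star)$. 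At $(\mu,h^\star)$ it reads $\objmeas(\mu)-\objdual(h^\star)=L_F(\Phi\mu\parallel h^\star)+\lambda L_R(\mu\parallel\eta^\star)$. Subtracting gives \eqref{eq:obj_decomposition}. If $\objmeas(\mu)=+\infty$, the identity holds in the extended sense, both sides being $-\infty$. The identification $L_R(\mu\parallel\eta^\star)=R_{\eta^\star}(\mu\mid\mu^\star)$ is exactly Proposition~\ref{prop:FYL_source_Bregman}.

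The bounds \eqref{eq:obj_decomposition_bound}--\eqref{eq:obj_decomposition_prediction_bound} are then immediate. If $\objmeas(\mu)\le\objmeas(\mu^\star)$, the left side of \eqref{eq:obj_decomposition} is nonnegative, which rearranges to \eqref{eq:obj_decomposition_bound}. All terms are finite here because $\objmeas(\mu)<\infty$. Since both Fenchel--Young losses are nonnegative, each of them is separately at most the sum, which gives the estimation and prediction bounds.

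The last bullet, \eqref{eq:obj_decomposition_bound_Carlier}, is the only step that is not algebra. I would follow Carlier's sharpened Fenchel--Young inequality. Set $x=\Phi\mu^\star$ and $p=(\Id+\tau\partial F)^{-1}(x+\tau h^\star)$, which is well defined because $\partial F$ is maximal monotone. Then $q\coloneqq(x-p)/\tau+h^\star\in\partial F(p)$, so $(p,\tau q)\in\graph(\tau\partial F)$. The squared distance from $(x,\tau h^\star)$ to this point is $\|x-p\|^2+\|\tau h^\star-\tau q\|^2=2\|x-p\|^2$, because $\tau(h^\star-q)=p-x$. This yields the first inequality, with the factor $\frac{1}{2\tau}\cdot 2=\frac1\tau$.

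For the second inequality I would exploit $F(p)+F^\star(q)=\langle p,q\rangle$ together with two subgradient inequalities. From $q\in\partial F(p)$ we get $F(x)\ge F(p)+\langle q,x-p\rangle$. From $p\in\partial F^\star(q)$ we get $F^\star(h^\star)\ge F^\star(q)+\langle p,h^\star-q\rangle$. Adding these gives
\[
L_F(x\parallel h^\star)\ge \langle q,x\rangle+\langle p,h^\star\rangle-\langle p,q\rangle-\langle h^\star,x\rangle=-\langle x-p,\,h^\star-q\rangle .
\]
Substituting $h^\star-q=(p-x)/\tau$ shows that the right-hand side equals $\|x-p\|^2/\tau$, which is the claim. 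This is the step I expect to need the most care: I have to check the sign conventions and confirm that $F$ proper l.s.c.\ convex suffices for maximal monotonicity and for $F^\star$'s subgradient inversion, which holds since $F=F^{\star\star}$.
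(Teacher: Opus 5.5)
Your proposal is correct and matches the paper's proof essentially step for step: you subtract the gap identity at $(\mu^\star,h^\star)$ and $(\mu,h^\star)$ so that $\objdual(h^\star)$ cancels, read off the bounds from the nonnegativity of the Fenchel--Young losses, and use the graph point $(p,\tau q)$ built from the resolvent to get the factor $2$ in the distance estimate. The only difference is that the paper cites Carlier's Lemma~1.1 for $L_F(\Phi\mu^\star\parallel h^\star)\ge\tau^{-1}\|\Phi\mu^\star-p\|_\calH^2$, whereas you derive it by adding the subgradient inequalities for $q\in\partial F(p)$ and $p\in\partial F^\star(q)$; this derivation is valid, and it does not even need $F=F^{\star\star}$, since $p\in\partial F^\star(q)$ already follows from the Fenchel equality at $(p,q)$ for any proper convex $F$.
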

\end{subequations}
\noindent
Note that \eqref{eq:obj_decomposition_bound_Carlier} is the sharpened Fenchel--Young inequality of \citet[Lemma~1.1]{carlier2023fenchel} which bounds the scalar $L_F(\Phi \mu^\star \parallel h^\star)$ from below by a squared distance to the graph of the subdifferential~$\partial F$ (the gradient $\nabla F$ when $F$ is smooth).

\begin{proof}
By the source condition, $L_R(\mu^\star \parallel \eta^\star)=0$ (Lemma~\ref{lem:fy_properties}), so $R^\star(\eta^\star)$ is finite and $h^\star\in\dom(\objdual)$. The gap identity of Theorem~\ref{thm:dual_gap}, applied at $(\mu^\star,h^\star)$ and at $(\mu,h^\star)$ gives, the finite dual value $\objdual(h^\star)$ cancelling,
\[
    \objmeas(\mu^\star)-\objmeas(\mu)=\Delta(\mu^\star,h^\star)-\Delta(\mu,h^\star)
    =L_F(\Phi\mu^\star\parallel h^\star)-L_F(\Phi\mu\parallel h^\star)-\lambda\, L_R(\mu\parallel\eta^\star)\,,
\]
which is~\eqref{eq:obj_decomposition}; the identification with the Bregman divergence stems from Proposition~\ref{prop:FYL_source_Bregman}. If $\objmeas(\mu)\leq\objmeas(\mu^\star)$, the left-hand side of~\eqref{eq:obj_decomposition} is non-negative and the upper bound in~\eqref{eq:obj_decomposition_bound} follows; the lower bound holds because both Fenchel--Young losses are non-negative (Lemma~\ref{lem:fy_properties}).

For~\eqref{eq:obj_decomposition_bound_Carlier}, the subdifferential $\partial F$ is maximal monotone ($F$ proper, l.s.c.\ and convex, by Rockafellar's theorem), so the resolvent $(\Id+\tau\partial F)^{-1}$ is single-valued and defined on all of $\calH$ (Minty); for a smooth $F$ it is $(\Id+\tau\nabla F)^{-1}$. \citet[Lemma~1.1]{carlier2023fenchel} applied to $F$ at $(\Phi\mu^\star,h^\star)$ gives $L_F(\Phi\mu^\star\parallel h^\star)\ge\tau^{-1}\|\Phi\mu^\star-\bar x\|_\calH^2$ with $\bar x\coloneqq(\Id+\tau\partial F)^{-1}(\Phi\mu^\star+\tau h^\star)$. 

Set $\bar p\coloneqq h^\star+\tau^{-1}(\Phi\mu^\star-\bar x)\in\partial F(\bar x)$ (the resolvent identity). Then, one has $\tau\|h^\star-\bar p\|_\calH=\|\Phi\mu^\star-\bar x\|_\calH$ and $(\bar x,\tau\bar p)\in\graph(\tau\partial F)$, so the lower bound equals $\tfrac{1}{2\tau}\bigl(\|\Phi\mu^\star-\bar x\|_\calH^2+\tau^2\|h^\star-\bar p\|_\calH^2\bigr)\ge\tfrac{1}{2\tau}\dist\nolimits_{\calH\times\calH}((\Phi\mu^\star,\tau h^\star),\graph(\tau\partial F))^2$.

Dropping either of the two non-negative terms on the left of~\eqref{eq:obj_decomposition_bound} gives~\eqref{eq:obj_decomposition_estimation_bound} and~\eqref{eq:obj_decomposition_prediction_bound}; both being non-negative, their maximum is at most their sum, hence at most $L_F(\Phi\mu^\star\parallel h^\star)$.
\end{proof}

\paragraph{Interpretation as error bounds.}
Theorem~\ref{thm:FY_discrepancy_error_bounds} bounds the estimation error (measured by $\lambda L_R$) and the prediction error (measured by $L_F$) by the single scalar $L_F(\Phi \mu^\star \parallel h^\star)$. Under the source condition this scalar is the duality gap at the certified pair, $L_F(\Phi\mu^\star\parallel h^\star)=\Delta(\mu^\star,h^\star)$. It vanishes exactly when Mirror Alignment holds at the certificate, $h^\star\in\partial F(\Phi\mu^\star)$ (Lemma~\ref{lem:fy_properties}), the single-valued $h^\star=\nabla F(\Phi\mu^\star)$ under~$\calF_0$; in that case both recovery errors are zero. 

\begin{remark}[Best certificate and bounds without the source condition]
\label{rem:best_certificate_free_bounds}
$(i)$ The bounds above are tightest for the certificate
\[
    h^\star \in \argmin_{h\in\calH}\Big\{L_F(\Phi \mu^\star \parallel h)
\,;\,
-\frac{\Phi^\star h}{\lambda}\in\partial R(\mu^\star)
\Big\}\,.
\]
One can check that a minimizer exists provided $\Phi\mu^\star\in\operatorname{int}\dom F$ (in particular whenever $(F,R)\in\calF_0$, where $\dom F=\calH$) and it is 
unique whenever $F$ is differentiable (in particular under $\calF_0$) and one minimizer lies in $\operatorname{int}\dom F^\star$. 

$(ii)$ Without the source condition, Theorem~\ref{thm:dual_gap} still gives error bounds. Let $\mu,\mu^\star\in\dom(R)$ and $h\in\dom(\objdual)$, and set $\eta\coloneqq-\Phi^\star h/\lambda$. Subtracting the gap identity at $(\mu^\star,h)$ and $(\mu,h)$ gives the identity $\objmeas(\mu^\star)-\objmeas(\mu)=\Delta(\mu^\star,h)-\Delta(\mu,h)$, so $\objmeas(\mu)\leq\objmeas(\mu^\star)$ implies
\[
0\leq L_F(\Phi \mu \parallel h) +\lambda L_R( \mu \parallel\eta)\leq \Delta(\mu^\star,h)=L_F(\Phi\mu^\star\parallel h)+\lambda L_R(\mu^\star\parallel\eta)\,.
\]
The bound $\Delta(\mu^\star,h)$ now carries the extra regularizer term; it reduces to the single scalar of Theorem~\ref{thm:FY_discrepancy_error_bounds} exactly under the source condition.
\end{remark}

\section{Approximate source conditions and algorithmic stability}
\label{sec:approx_source_cond}

Section~\ref{sec:controlled_gaps} characterized optimality through the source condition $\eta^\star\in\partial R(\mu^\star)$ holding \emph{exactly}. Practical continuous optimization algorithms (conditional gradient methods, Forward--Backward splitting, or non-convex particle flows like Lion-K) are subject to discretization error, finite stopping times, and non-convex landscapes, so they typically do not satisfy this inclusion. Early stopping yields instead an $\epsilon$-approximate source condition measured directly by the regularizer Fenchel--Young loss, as $\lambda L_R(\mu\parallel\eta)\le\epsilon$, where $\epsilon$ depends (not exclusively) on the number of (conditional) gradient steps, the dimension and the initialization, as described by the available rates of convergence of the chosen algorithm. We then show that any such $\epsilon$-approximate primal--dual pair can be upgraded to a \emph{dual-feasible} proxy. The classical theorem by \cite{Brondsted1965}, stated on Banach spaces, guarantees the \emph{existence} of a nearby exact subgradient pair based on an order argument via Zorn’s lemma and Bishop--Phelps lemma. \citet{carlier2023fenchel} makes the argument \emph{constructive} on Hilbert spaces, realizing the proxy as a proximal step: it lies within Hilbert distance $\sqrt{\tau\epsilon}$ of the iterate for every free balancing parameter $\tau>0$ (Theorem~\ref{thm:algorithmic_stability}), and the Fenchel--Young tolerance becomes an early-stopping criterion. We state the result for an arbitrary proper convex weak-$*$ lower semi-continuous $(\sigma(\calB^\star,\calB)$-l.s.c.$)$ regularizer $R$ and specialize it to the GBL total-variation norm afterwards (Remark~\ref{rem:prox_u_concrete}). Section~\ref{subsec:mirror_refinement} then refines the construction in the geometry of the data-fidelity term: the Euclidean prox is replaced by a mirror proximal step with kernel $F^\star$, tilted by the prediction $\Phi\mu$, which recovers Carlier's step when $F$ is quadratic and ties the proxy to the Mirror Alignment of Section~\ref{sec:controlled_gaps}.

\subsection{Hilbert pull-back of the regularizer}

Carlier's \emph{constructive} Br\o ndsted--Rockafellar theorem~\citep[Theorem~4.1]{carlier2023fenchel} is stated for convex functions on a Hilbert space, whereas $R$ acts on the Banach space $\calB^\star$. We hence pull $R$ back to $\calH$ through $\Phi^\star$ so that Carlier's result applies (Theorem~\ref{thm:algorithmic_stability}).

\begin{assumption}[Regularizer admissible for Br\o ndsted--Rockafellar]\label{ass:BR_regularizer} 
Assume that 
$(\calH,\calB,\Phi)\in\calG_0$, 
the regularizer $R:\calB^\star\to\R\cup\{+\infty\}$ is proper, convex and weak-$*$ lower semi-continuous, 
and $\lambda>0$. 
Define the \emph{pulled-back regularizer}
\begin{equation}
\label{eq:BR_pullback}
    u:\calH\to\R\cup\{+\infty\},\qquad
    u(h)\;\coloneqq\;\lambda\,R^\star\Big(-\frac{\Phi^\star h}{\lambda}\Big),
    \qquad
    \dom u=\Big\{h\in\calH:-\frac{\Phi^\star h}{\lambda}\in\dom R^\star\Big\}.
\end{equation}
We assume $\dom u\neq\varnothing$.
\end{assumption}

\noindent The pulled-back regularizer $u$ in~\eqref{eq:BR_pullback} is convex, l.s.c.\ and proper on $\calH$;
see Lemma~\ref{lem:pullback_regularity} for the proof. The feasibility requirement $\dom u\neq\varnothing$ holds
whenever $0\in\dom R^\star$, in particular for every norm regularizer, where $R^\star=\iota_{B_\calB}$ is the
indicator of the unit ball $B_\calB$ of the pre-dual and $0$ lies inside it, so $0\in\dom u$.

\begin{prop}[Hilbert pull-back and gap inequality]\label{prop:BR_gap_inequality}
Under \Cref{ass:BR_regularizer}, for every $\mu\in\dom R$ and every $h\in\dom u$, writing~$\eta\coloneqq-\Phi^\star h/\lambda$,
one has $u^\star(-\Phi\mu)\le\lambda R(\mu)$ and hence
\begin{equation}\label{eq:BR_gap_inequality}
    L_u(h\parallel-\Phi\mu)\;=\;u(h)+u^\star(-\Phi\mu)-\langle h,-\Phi\mu\rangle_\calH
    \;\le\;\lambda\,L_R(\mu\parallel\eta).
\end{equation}
Equality holds iff the supremum over the subspace already reaches the full value, i.e.\ iff
\[
\sup_{f\in\ran\Phi^\star}\big[\langle f,\mu\rangle_{\calB,\calB^\star}-R^\star(f)\big]=R(\mu)\,.
\]
\end{prop}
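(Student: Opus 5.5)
The plan is to compute the conjugate $u^\star$ explicitly at the point $-\Phi\mu$ and compare it with $\lambda R(\mu)$ through the biconjugate representation $R=R^{\star\star}$ given by weak-$*$ lower semicontinuity (Definition~\ref{def_assumption:fidelity}). By definition,
\[
u^\star(-\Phi\mu)=\sup_{h\in\calH}\Big\{\langle h,-\Phi\mu\rangle_\calH-\lambda R^\star\Big(-\frac{\Phi^\star h}{\lambda}\Big)\Big\}.
\]
Using Lemma~\ref{lem:dual_Phi_banach}, I write $\langle h,-\Phi\mu\rangle_\calH=\langle -\Phi^\star h,\mu\rangle_{\calB,\calB^\star}$. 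I then substitute $f\coloneqq-\Phi^\star h/\lambda$, which ranges over $\ran\Phi^\star$ as $h$ ranges over $\calH$, since $\ran\Phi^\star$ is a linear subspace and $\lambda>0$. The supremum becomes
\[
u^\star(-\Phi\mu)=\lambda\sup_{f\in\ran\Phi^\star}\big[\langle f,\mu\rangle_{\calB,\calB^\star}-R^\star(f)\big].
\]
This step needs only care with $+\infty$ values; points $h\notin\dom u$ contribute $-\infty$ and are harmless.

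The inequality then follows because the supremum over the subspace $\ran\Phi^\star\subset\calB$ is at most the supremum over all of $\calB$. The latter equals $R^{\star\star}(\mu)=R(\mu)$ by the Fenchel--Moreau theorem for the pairing $(\calB,\calB^\star)$, which is exactly the weak-$*$ lower semicontinuity hypothesis on $R$. Hence $u^\star(-\Phi\mu)\le\lambda R(\mu)$, and this is finite since $\mu\in\dom R$.

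For \eqref{eq:BR_gap_inequality}, I expand both losses. First, $u(h)=\lambda R^\star(\eta)$ by definition, and it is finite since $h\in\dom u$. Second, $-\langle h,-\Phi\mu\rangle_\calH=\langle h,\Phi\mu\rangle_\calH=\langle\Phi^\star h,\mu\rangle=-\lambda\langle\eta,\mu\rangle_{\calB,\calB^\star}$. Therefore
\[
L_u(h\parallel-\Phi\mu)=\lambda R^\star(\eta)+u^\star(-\Phi\mu)-\lambda\langle\eta,\mu\rangle
\le\lambda\big(R^\star(\eta)+R(\mu)-\langle\eta,\mu\rangle\big)=\lambda L_R(\mu\parallel\eta).
\]
The first two terms of $L_u$ and $\lambda L_R$ coincide, and all terms are finite. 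So equality holds iff $u^\star(-\Phi\mu)=\lambda R(\mu)$, which by the displayed formula is exactly the stated condition $\sup_{f\in\ran\Phi^\star}[\cdots]=R(\mu)$.

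The main obstacle is only bookkeeping. The sign conventions for the pre-adjoint pairing must be consistent with Lemma~\ref{lem:dual_Phi_banach}. The infinite values must be handled so that the equivalence for the equality case is not spoiled by $\infty-\infty$ expressions. This is why I keep the hypotheses $\mu\in\dom R$ and $h\in\dom u$ throughout.
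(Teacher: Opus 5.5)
Your proposal is correct and follows the paper's proof essentially line by line. The substitution $f=-\Phi^\star h/\lambda$ turns $u^\star(-\Phi\mu)$ into $\lambda\sup_{f\in\ran\Phi^\star}[\langle f,\mu\rangle_{\calB,\calB^\star}-R^\star(f)]$, Fenchel--Moreau then bounds this by $\lambda R(\mu)$, and a term-by-term comparison of $L_u$ with $\lambda L_R$ gives both the inequality and the equality case. One small wording fix: the matching terms are the $R^\star(\eta)$ term and the pairing term, not ``the first two terms''.
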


\noindent
Note that a sufficient condition for equality in~\eqref{eq:BR_gap_inequality} is that $\mu$ admits a dual certificate in $\ran\Phi^\star$, that is some $f\in\ran\Phi^\star$ with $f\in\partial R(\mu)$ (Lemma~\ref{lem:certificate_range_closure}(i)). For a norm regularizer $R=\|\cdot\|_{\calB^\star}$, in particular $R=\|\cdot\|_{\TV}$, a certificate in the closure $\overline{\ran\Phi^\star}$ is already enough, by rescaling (Lemma~\ref{lem:certificate_range_closure}(ii)). The converse of the range condition is then false: if $\ran\Phi^\star$ is dense in $\calB$ (a non-closed $\ran\Phi^\star$ is the generic situation for a compact $\Phi$), equality holds for every $\mu\in\dom R$ while there may be no certificate (Lemma~\ref{lem:certificate_range_closure}(iii)).

\begin{proof}
By definition,
\[
u^\star(-\Phi\mu)=\sup_{h'\in\calH}\langle-\Phi\mu,h'\rangle_\calH-\lambda R^\star(-\Phi^\star h'/\lambda).
\]
Set $f=-\Phi^\star h'/\lambda$; then
\[
\langle-\Phi\mu,h'\rangle_\calH=-\langle\mu,\Phi^\star h'\rangle_{\calB^\star,\calB}
=\lambda\langle f,\mu\rangle_{\calB,\calB^\star},
\]
and as $h'$ ranges over $\calH$, $f$ ranges over the subspace $\Phi^\star(\calH)$. Therefore
\[
    u^\star(-\Phi\mu)
    =\lambda\sup_{f\in\Phi^\star(\calH)}\big[\langle f,\mu\rangle_{\calB,\calB^\star}-R^\star(f)\big]
    \;\le\;\lambda\sup_{f\in\calB}\big[\langle f,\mu\rangle_{\calB,\calB^\star}-R^\star(f)\big]
    =\lambda R^{\star\star}(\mu)\;\underset{(\ast)}{=}\;\lambda R(\mu),
\]
with equality iff the supremum over the subspace $\Phi^\star(\calH)$ already equals the unconstrained one; here $(\ast)$ is the Fenchel--Moreau identity $R^{\star\star}=R$ for the dual pair $(\calB,\calB^\star)$, valid since $R$ is proper, convex and weak-$*$ l.s.c.\ (\Cref{ass:BR_regularizer}), with $R^\star$ taken on the pre-dual $\calB$ so that no reflexivity of $\calB$ is used (cf.\ Lemma~\ref{lem:fy_properties}). Finally,
using $u(h)=\lambda R^\star(\eta)$ and~$\langle h,-\Phi\mu\rangle_\calH=-\langle\Phi^\star h,\mu\rangle_{\calB,\calB^\star}=\lambda\langle\eta,\mu\rangle_{\calB,\calB^\star}$,
\[
    L_u(h\parallel-\Phi\mu)
    =\lambda R^\star(\eta)+u^\star(-\Phi\mu)-\lambda\langle\eta,\mu\rangle_{\calB,\calB^\star}
    \le\lambda\big(R(\mu)+R^\star(\eta)-\langle\eta,\mu\rangle_{\calB,\calB^\star}\big)
    =\lambda L_R(\mu\parallel\eta).\qedhere
\]
\end{proof}

\begin{remark}[Why the gap is evaluated at $(h,-\Phi\mu)$]\label{rem:gap_eval_point}
The slack between $L_u(h\parallel-\Phi\mu)$ and $\lambda L_R(\mu\parallel\eta)$ measures the failure of topological
certifiability, i.e.\ the gap between $u^\star(-\Phi\mu)$ and $\lambda R(\mu)$; it is generically non-zero
when $\Phi^\star(\calH)$ is not sup-norm dense in $\calB$.
The inequality direction is exactly what we need: an upper bound on $\lambda L_R$ propagates to an upper bound
on~$L_u$. 
\end{remark}

\subsection{Algorithmic stability}

In this section we show that any $\epsilon$-approximate primal--dual pair $(\mu,h)$ can be upgraded to a nearby \emph{dual-feasible} proxy $(\bar\mu,\bar h)$ satisfying the exact subgradient inclusion $\bar\eta\in\partial R(\bar\mu)$, with $\bar\eta=-\Phi^\star \bar h/\lambda$. The construction proceeds in two steps: a Br\o ndsted--Rockafellar proximal step on the pulled-back regularizer $u$ of~\eqref{eq:BR_pullback}, within Hilbert distance set by the Fenchel--Young tolerance $\epsilon$ (Theorem~\ref{thm:algorithmic_stability}); and its lift to a primal certificate $\bar\mu$ (Proposition~\ref{prop:BR_primal_lift}). Nothing in that step is specific to the quadratic geometry, and Section~\ref{subsec:mirror_refinement} builds the refinement adapted to $F^\star$. 
For all $\tau>0$ the proximal operator of $\tau u$ is defined~by
\begin{equation}
\notag
\begin{aligned}
    \operatorname{prox}_{\tau u}(h)
    \;\coloneqq\;\argmin_{h'\in\calH}\Big\{u(h')+\frac{1}{2\tau}\|h'-h\|_\calH^2\Big\}
    \;=\;\argmin_{h'\in\calH}\Big\{\frac{1}{2\tau}\|h'-h\|_\calH^2+\lambda R^\star\Big(-\frac{\Phi^\star h'}{\lambda}\Big)\Big\}\;.
\end{aligned}
\end{equation}

\begin{subequations}\label{eq:BR_proxy_subequations}
\begin{theo}[Algorithmic stability via Br\o ndsted--Rockafellar, general regularization]
\label{thm:algorithmic_stability}
Let \Cref{ass:BR_regularizer} hold and suppose an algorithm produces an approximate primal--dual pair~$(\mu,h)\in\dom R\times\dom u$
whose regularizer Fenchel--Young loss obeys
\begin{equation}\label{eq:BR_eps}
    \lambda\,L_R(\mu\parallel\eta)\;\le\;\epsilon,\qquad\text{where } \eta=-\frac{\Phi^\star h}{\lambda}.
\end{equation}
Then for every $\tau>0$ there exist $\bar h\in\dom u$ and $\bar v\in\partial u(\bar h)\subseteq\calH$ given by
\begin{equation}
\label{eq:BR_proximal_formula}
    \bar h\;\coloneqq\;\operatorname{prox}_{\tau u}\!\left(h - \tau\Phi\mu\right),\qquad
    \bar v\;\coloneqq\;-\Phi\mu + \tau^{-1}(h - \bar h),
\end{equation}
and satisfying
\begin{equation}\label{eq:BR_proxy}
    \|\bar h-h\|_\calH\;\le\;\sqrt{\tau\,\epsilon},\qquad
    \|\bar v-(-\Phi\mu)\|_\calH\;\le\;\sqrt{\epsilon/\tau}.
\end{equation}
\end{theo}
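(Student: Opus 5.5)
The plan is to reduce the statement to Carlier's constructive Br\o ndsted--Rockafellar theorem applied to the pulled-back regularizer $u$ on the Hilbert space $\calH$, with the pair $(h,-\Phi\mu)$. There are three steps.

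First, by Lemma~\ref{lem:pullback_regularity}, $u$ is proper, convex and l.s.c.\ on $\calH$. Its subdifferential is therefore maximal monotone, and the prox of $\tau u$ is well defined and single valued for every $\tau>0$. Proposition~\ref{prop:BR_gap_inequality} together with~\eqref{eq:BR_eps} then gives
\[
L_u(h\parallel -\Phi\mu)\le \lambda L_R(\mu\parallel\eta)\le\epsilon .
\]
This transfers the Banach-side tolerance to a Hilbert-side Fenchel--Young gap for $u$. In particular $u^\star(-\Phi\mu)<\infty$, so all quantities are finite.

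Second, I would either invoke \citet[Theorem~4.1]{carlier2023fenchel} directly, or reprove it in a few lines as follows. Set $z\coloneqq h-\tau\Phi\mu=h+\tau(-\Phi\mu)$ and $\bar h=\operatorname{prox}_{\tau u}(z)$. The optimality condition of the prox gives $\tau^{-1}(z-\bar h)\in\partial u(\bar h)$. Since $\tau^{-1}(z-\bar h)=-\Phi\mu+\tau^{-1}(h-\bar h)=\bar v$, this yields $\bar v\in\partial u(\bar h)$ and $\bar h\in\dom u$.

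Third, the quantitative bound. Write $x\coloneqq h$, $y\coloneqq-\Phi\mu$, so that $z=x+\tau y$. By construction $\bar h-x=-\tau(\bar v-y)$, hence it suffices to prove $\tau\|\bar v-y\|^2\le L_u(x\parallel y)$, which gives both inequalities in~\eqref{eq:BR_proxy}. To show this:
\begin{itemize}
\item Apply Fenchel--Young equality at $(\bar h,\bar v)$, namely $u(\bar h)+u^\star(\bar v)=\langle \bar h,\bar v\rangle$.
\item Use the two subgradient inequalities $u(x)\ge u(\bar h)+\langle\bar v,x-\bar h\rangle$ and $u^\star(y)\ge u^\star(\bar v)+\langle \bar h, y-\bar v\rangle$, the latter because $\bar h\in\partial u^\star(\bar v)$.
\item Sum them to get $L_u(x\parallel y)\ge \langle \bar v - y, x-\bar h\rangle$.
\item Substitute $x-\bar h=\tau(\bar v-y)$ to obtain $L_u(x\parallel y)\ge\tau\|\bar v-y\|^2$.
\end{itemize}
Hence $\|\bar v+\Phi\mu\|\le\sqrt{\epsilon/\tau}$ and $\|\bar h-h\|=\tau\|\bar v-y\|\le\sqrt{\tau\epsilon}$.

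The main obstacle is the first step: passing from $\lambda L_R$ on the Banach pair $(\calB,\calB^\star)$ to $L_u$ on $\calH$, where only an inequality is available. Its direction is exactly what is needed, and Proposition~\ref{prop:BR_gap_inequality} already settles it. The remaining points are routine checks: that $u^\star(-\Phi\mu)>-\infty$, which holds because $\dom u\ne\varnothing$, and that the subgradient inequality for $u^\star$ holds in $\calH$, which follows from $u=u^{\star\star}$ on the Hilbert space.
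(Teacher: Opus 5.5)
Your proposal is correct and follows the paper's proof. Both first transfer the tolerance via Proposition~\ref{prop:BR_gap_inequality}, giving $L_u(h\parallel-\Phi\mu)\le\lambda L_R(\mu\parallel\eta)\le\epsilon$, and then apply Carlier's constructive Br\o ndsted--Rockafellar theorem to the pulled-back regularizer $u$ at the pair $(h,-\Phi\mu)$.

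Your inline reproof of Carlier's bound, $L_u(h\parallel-\Phi\mu)\ge\tau\|\bar v+\Phi\mu\|_\calH^2=\tau^{-1}\|h-\bar h\|_\calH^2$, is valid and makes the argument self-contained. One small simplification: the inclusion $\bar h\in\partial u^\star(\bar v)$ follows directly from the Fenchel--Young equality at $(\bar h,\bar v)$, so you do not need $u=u^{\star\star}$.
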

\end{subequations}
\medskip\noindent
Note that the proxy is \emph{dual-feasible}, $\bar\eta\coloneqq-\Phi^\star\bar h/\lambda\in\dom R^\star$, and $\bar v\in\partial u(\bar h)$ is an \emph{exact} subgradient inclusion for the pulled-back regularizer~$u$ of~\eqref{eq:BR_pullback}. Note also that the tolerance $\epsilon$ need not be small: one may take $\epsilon=\lambda L_R(\mu\parallel\eta)$, the running regularizer loss itself, so that the bounds~\eqref{eq:BR_proxy} hold at any iterate with no smallness assumption. 
\begin{proof}
By \Cref{prop:BR_gap_inequality} and~\eqref{eq:BR_eps}, $L_u(h\parallel-\Phi\mu)\le\lambda L_R(\mu\parallel\eta)\le\epsilon$.
We apply Carlier's constructive Br\o ndsted--Rockafellar theorem~\citep[Theorem~4.1]{carlier2023fenchel} to the convex l.s.c.\ proper function $u$ of~\eqref{eq:BR_pullback} at~$(x,p)=(h,-\Phi\mu)$:
for every $\tau>0$ the proximal point $\bar h=\operatorname{prox}_{\tau u}(h-\tau\Phi\mu)$ and $\bar v=-\Phi\mu+\tau^{-1}(h-\bar h)$ form a graph point $(\bar h,\bar v)\in\graph(\partial u)$
satisfying~\eqref{eq:BR_proxy}. Feasibility $\bar h\in\dom u$ gives $\bar\eta=-\Phi^\star\bar h/\lambda\in\dom R^\star$,
and $\bar v\in\partial u(\bar h)$ is the (exact) subgradient inclusion for the pulled-back regularizer $u$.
\end{proof}

\paragraph{Lift to a primal certificate.}
We now show that the proxy~$(\bar h,\bar v)$ of Theorem~\ref{thm:algorithmic_stability} lifts to a primal certificate $(\bar\mu,\bar\eta)$, with $\bar\eta=-\Phi^\star\bar h/\lambda$ and $\bar v=-\Phi\bar\mu$, under a mild composition constraint qualification. For a norm regularizer $R$ the qualification holds for every bounded~$\Phi$, compact or not, and the lift is unconditional.

\begin{prop}[Pulling the subgradient back to a primal certificate]\label{prop:BR_primal_lift}
Let $(\bar h,\bar v)$ be the Br\o ndsted--Rockafellar proxy of Theorem~\ref{thm:algorithmic_stability}, with $\bar\eta=-\Phi^\star\bar h/\lambda$. The inclusion $\bar v\in\partial u(\bar h)$ holds unconditionally. Suppose in addition the Robinson--Rockafellar constraint qualification
\[
  0\in\operatorname{int}\bigl(\dom R^\star-\ran\Phi^\star\bigr)
\]
holds or, more generally, the Attouch--Brezis condition that $\bigcup_{t\ge0}t(\dom R^\star-\ran\Phi^\star)$ is a closed subspace of $\calB$. %
Then the subdifferential chain rule holds with equality at $\bar h$,
\[
  \partial u(\bar h)=-\Phi\,\partial R^\star(\bar\eta)
  \qquad\Bigl(\text{GBL case of Remark~\ref{rem:prox_u_concrete}: }N_{K_\lambda}(\bar h)=-\Phi\,N_{\{\|\cdot\|_\infty\le1\}}(\bar\eta)\Bigr),
\]
and $\bar v$ lifts to a primal certificate: there exists $\bar\mu\in\calB^\star$ with $\bar v=-\Phi\bar\mu$ and $\bar\eta\in\partial R(\bar\mu)$, a source condition for $R$ at $\bar\mu$. 
\end{prop}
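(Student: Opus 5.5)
We view $u$ as a composition $u=g\circ A$, with $g\coloneqq\lambda R^\star(\cdot/\lambda)$ on $\calB$ and $A\coloneqq-\Phi^\star:\calH\to\calB$. The first claim, $\bar v\in\partial u(\bar h)$, needs nothing beyond Theorem~\ref{thm:algorithmic_stability}. There $\bar h=\operatorname{prox}_{\tau u}(h-\tau\Phi\mu)$, and the optimality condition of the prox reads $0\in\partial u(\bar h)+\tau^{-1}(\bar h-h+\tau\Phi\mu)$. This gives $\bar v=-\Phi\mu+\tau^{-1}(h-\bar h)\in\partial u(\bar h)$ directly, since the prox objective is a sum of $u$ and a continuous quadratic, so the sum rule is exact.

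For the chain rule, the inclusion $-\Phi\,\partial R^\star(\bar\eta)\subseteq\partial u(\bar h)$ is elementary. Take $\bar\mu\in\partial R^\star(\bar\eta)$, a subgradient in $\calB^\star$ for the pairing $(\calB,\calB^\star)$. For any $h'\in\calH$,
\[
u(h')-u(\bar h)=\lambda\bigl(R^\star(-\Phi^\star h'/\lambda)-R^\star(\bar\eta)\bigr)\ge\langle-\Phi^\star(h'-\bar h),\bar\mu\rangle=\langle-\Phi\bar\mu,h'-\bar h\rangle_\calH .
\]
The reverse inclusion is the content of the Attouch--Brezis (or Robinson--Rockafellar) qualification. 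I would apply the Fenchel duality form of the chain rule for $g\circ A$ with $A$ continuous linear, namely $\partial(g\circ A)(\bar h)=A^\star\partial g(A\bar h)$ under $\bigcup_{t\ge0}t(\dom g-\ran A)$ a closed subspace \citep[Theorem~2.8.3]{zalinescu2002convex}. Two things need checking here. First, the adjoint $A^\star$ maps $\calB^\star\to\calH$ and coincides with $-\Phi$; this is Lemma~\ref{lem:dual_Phi_banach}, i.e.\ $\langle\Phi^\star h,\mu\rangle=\langle h,\Phi\mu\rangle$. Second, $\partial g(\bar\eta\cdot)$ must be taken in $\calB^\star$ rather than $\calB^{\star\star\star}$. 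I expect this second point to be the main obstacle: the general theorem produces subgradients in the topological dual of $\calB$, which is exactly $\calB^\star$, so it is fine, but we must work with $\calB$ as the primal space and not with its bidual. We also note $\dom g=\lambda\dom R^\star$ and that the cone generated by $\dom g-\ran A$ equals the cone generated by $\dom R^\star-\ran\Phi^\star$, since $\ran\Phi^\star$ is a subspace. Rescaling by $\lambda$ gives $\partial g(x)=\partial R^\star(x/\lambda)$, which yields $\partial u(\bar h)=-\Phi\,\partial R^\star(\bar\eta)$. In the GBL case, $R^\star$ is the indicator of the unit sup-ball, so $\partial R^\star=N_{\{\|\cdot\|_\infty\le1\}}$ and $u=\iota_{K_\lambda}$, which gives the stated normal-cone identity.

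Finally, from $\bar v\in\partial u(\bar h)=-\Phi\,\partial R^\star(\bar\eta)$ we pick $\bar\mu\in\partial R^\star(\bar\eta)$ with $\bar v=-\Phi\bar\mu$. By Fenchel--Young equality, $L_R(\bar\mu\parallel\bar\eta)=0$, using $R=R^{\star\star}$ (weak-$*$ l.s.c.) so that $R^{\star\star}(\bar\mu)$ coincides with $R(\bar\mu)$. Lemma~\ref{lem:fy_properties} then converts this to $\bar\eta\in\partial R(\bar\mu)$, which is the source condition at $\bar\mu$.

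For the unconditional claim for norms: $\dom R^\star$ is the unit ball $B_\calB$, which already contains a neighbourhood of $0$. Hence $0\in\operatorname{int}(B_\calB-\ran\Phi^\star)$ and the Robinson--Rockafellar condition holds for every bounded $\Phi$.
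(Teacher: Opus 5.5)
Your proposal is correct and follows the paper's proof. The steps match: prox optimality gives the unconditional inclusion, the inclusion $-\Phi\,\partial R^\star(\bar\eta)\subseteq\partial u(\bar h)$ is automatic, the reverse inclusion comes from the composition rule of \citet[Section~2.8]{zalinescu2002convex} under the Robinson--Rockafellar/Attouch--Brezis qualification with adjoint $-\Phi$, and the lift uses the Fenchel--Young equivalence $\bar\mu\in\partial R^\star(\bar\eta)\iff\bar\eta\in\partial R(\bar\mu)$ via $R=R^{\star\star}$; putting $\lambda$ in $g=\lambda R^\star(\cdot/\lambda)$ instead of in $A=-\Phi^\star/\lambda$ is cosmetic, and your checks (subgradients in $\calB^\star$, scaling-invariance of the generated cone) make explicit what the paper leaves implicit.
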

\begin{proof}[Proof of Proposition~\ref{prop:BR_primal_lift}]
The inclusion $\bar v\in\partial u(\bar h)$ is the proximal optimality established in the proof of Theorem~\ref{thm:algorithmic_stability}. Re-expressing it as a source condition needs the reverse inclusion $\partial u(\bar h)\subseteq-\Phi\,\partial R^\star(\bar\eta)$, i.e.\ the subdifferential composition rule $\partial(g\circ A)=A^\star\partial g(A\,\cdot)$ for $g=\lambda R^\star$ and $A=-\Phi^\star/\lambda$ (the inclusion $\supseteq$ is automatic). Under this constraint qualification the composition rule holds with equality and the primal minimizer is attained \citep[Section~2.8]{zalinescu2002convex}, so $\partial u(\bar h)=-\Phi\,\partial R^\star(\bar\eta)$; since $\bar v\in\partial u(\bar h)$, there is $\bar\mu\in\partial R^\star(\bar\eta)$ with $\bar v=-\Phi\bar\mu$, and $\bar\mu\in\partial R^\star(\bar\eta)$ is equivalent to $\bar\eta\in\partial R(\bar\mu)$ by Fenchel--Young duality.
\end{proof}
\noindent

For a norm regularizer the Robinson--Rockafellar condition holds for free, with no assumption on~$\Phi$: $\dom R^\star=B_\calB$ is the closed unit ball of the pre-dual, so $0\in\operatorname{int}\dom R^\star$ and $R^\star=\iota_{B_\calB}$ is finite and continuous there; since $0=A0\in\ran\Phi^\star$, the interior condition $0\in\operatorname{int}(\dom R^\star-\ran\Phi^\star)$ holds at once, and a fortiori the Attouch--Brezis condition, since $\bigcup_{t\ge0}t(\dom R^\star-\ran\Phi^\star)\supseteq\bigcup_{t\ge0}tB_\calB=\calB$ is a closed subspace \citep{attouch1986duality}. For a compact $\Phi$ with infinite-dimensional range (bandlimited or RKHS feature maps), $\ran\Phi^\star$ is not closed in $\calB$, yet the interior route above still holds for a norm regularizer, so the equality is unchanged. Closedness of $\ran\Phi^\star$ is not on its own a constraint qualification for the chain rule: the qualification bears on the pair $(\dom R^\star,\ran\Phi^\star)$, requiring the cone $\bigcup_{t\ge0}t(\dom R^\star-\ran\Phi^\star)$ to be a linear \emph{subspace} and not merely closed. For $\Phi=0$ the range $\{0\}$ is closed, yet this cone is the one generated by $\dom R^\star$, in general not a subspace, and the equality can fail: e.g.\ $R^\star(f)=-\sqrt{f}$ on $[0,\infty)$, where $\partial R^\star(0)=\varnothing$ while $\partial u(\bar h)=\{0\}$.

Consequently the proxy $\bar v$ therefore always arises from an actual primal measure $\bar\mu$ with $\bar\eta\in\partial R(\bar\mu)$, for every bounded $\Phi$. What a compact $\Phi$ can obstruct is a separate property: certificate existence in $\ran\Phi^\star$ for a prescribed measure $\mu$ (Remark~\ref{rem:gap_eval_point}, Lemma~\ref{lem:certificate_range_closure}(iii)), not the chain rule at $\bar h$.

\begin{remark}[Concrete proximal operators]
\label{rem:prox_u_concrete}
{\ }
\begin{enumerate}[label=$(\roman*)$]
    \item \textbf{Total variation (GBL).} For $R=\|\cdot\|_{\TV}$ the conjugate is $R^\star=\iota_{\{\|\cdot\|_\infty\le1\}}$, so $u=\iota_{K_\lambda}$ is the indicator of the dual-feasible set 
    \[
        K_\lambda\coloneqq\{h'\in\calH:\|\Phi^\star h'\|_\infty\le\lambda\}
    \] 
    feasible since $0\in K_\lambda$, with $\partial u=N_{K_\lambda}$ the outward normal cone and $\operatorname{prox}_{\tau u}=\mathcal{P}_{K_\lambda}$, the projection onto $K_\lambda$, for all $\tau>0$. 
    
    Writing $\bar\eta=-\Phi^\star\bar h/\lambda$, so that $\bar h\in K_\lambda$ iff $\|\bar\eta\|_\infty\le1$, the cone $N_{K_\lambda}(\bar h)$ is carried by the \emph{saturation set} $\{x\in\calX:|\bar\eta(x)|=1\}$: any measure $\bar\mu\in\calM(\calX)$ supported there with $\operatorname{sign}\bar\mu=\operatorname{sign}\bar\eta$ (equivalently $\bar\eta\in\partial R(\bar\mu)$, a dual certificate for $\bar\mu$) gives $-\Phi\bar\mu\in N_{K_\lambda}(\bar h)$, and 
    \[
        N_{K_\lambda}(\bar h)=-\Phi\,N_{\{\|\cdot\|_\infty\le1\}}(\bar\eta)\,,
    \]
    with equality; the inclusion $\supseteq$ is automatic, and for a norm regularizer the reverse inclusion also holds, for every bounded $\Phi$, since $\dom R^\star$ is the unit ball of the pre-dual and $0\in\operatorname{int}\dom R^\star\cap\ran\Phi^\star$ supplies the qualification of Proposition~\ref{prop:BR_primal_lift}, whether or not $\Phi$ is compact.
    
    Consequently, for any algorithmic pair $(\mu,h)$ with $\lambda L_R(\mu\parallel\eta)\le\epsilon$, the proxy of Theorem~\ref{thm:algorithmic_stability} is an \emph{exactly dual-feasible} point $\bar h\in K_\lambda$ with $\bar v\in N_{K_\lambda}(\bar h)$, $\|\bar h-h\|_\calH\le\sqrt{\tau\epsilon}$ and $\|\bar v-(-\Phi\mu)\|_\calH\le\sqrt{\epsilon/\tau}$; its certificate $\bar\eta=-\Phi^\star\bar h/\lambda$ satisfies $\|\bar\eta\|_\infty\le1$, an admissible TV certificate. The Fenchel--Young tolerance $\epsilon$ thus certifies a fully feasible TV certificate within Hilbert distance $\sqrt{\tau\epsilon}$ of the iterate.
    \item \textbf{Squared norm.} For $\calB$ a Hilbert space and $R(\mu)=\tfrac{1}{2\lambda}\|\mu\|_{\calB^\star}^2$, we have $u(h)=\tfrac{1}{2}\|\Phi^\star h\|_{\calB}^2$ and $\operatorname{prox}_{\tau u}(h)=(\Id+\tau\,\Phi\Phi^\star)^{-1}h$, the Tikhonov resolvent in $\calH$.
    \item \textbf{Nuclear norm.} For $R(W)=\|W\|_*$, $u$ is the indicator of the set $\{h:\|\Phi^\star h\|_{\mathrm{op}}\le\lambda\}$ and $\operatorname{prox}_{\tau u}$ is the orthogonal projection onto this set. For $\Phi=\Id$ this projection clips the singular values at level~$\lambda$, $\sigma_i\mapsto\min(\sigma_i,\lambda)$; it is \emph{not} the singular-value soft-thresholding, which is the proximal operator of the nuclear norm itself.
\end{enumerate}
\end{remark}

\begin{remark}[Cheap implementable proxy: dual rescaling]\label{rem:dual_rescaling}
In the TV case the orthogonal projection~$\mathcal P_{K_\lambda}$ is a continuous-domain quadratic program with infinitely many linear constraints. For numerical applications, it can be replaced by the scalar contraction
\[
    \tilde h\coloneqq c\,h,\qquad c\coloneqq\min\!\Big(1,\tfrac{\lambda}{\|\Phi^\star h\|_\infty}\Big)\in[0,1],
\]
a one-step retraction onto $K_\lambda$. Even if it is not obtained by projection, the certificate $\tilde\eta=-\Phi^\star\tilde h/\lambda=c\,\eta$
stays in $\ran\Phi^\star$, and the duality-gap identity (Theorem~\ref{thm:dual_gap}) at~$(\mu,\tilde h)$
yields the computable decomposition
\[
\Delta(\mu,\tilde h)=L_F(\Phi\mu\parallel\tilde h)+\lambda L_R(\mu\parallel\tilde\eta)\ge\objmeas(\mu)-\inf\objmeas\,,
\]
both pieces finite and oracle-free (see Section~\ref{subsec:exp_certificate} for further details).
\end{remark}

\subsection{Mirror proximal refinement adapted to the fidelity}
\label{subsec:mirror_refinement}

A limitation of \Cref{thm:algorithmic_stability} is that it is based on a $\operatorname{prox}$ step, hence gives a particular role to the squared Hilbert distance on $\calH$ whereas in Section~\ref{sec:controlled_gaps} we only dealt with Bregman divergences, comparing images $\Phi\mu$ through the lens of $F$. Also \Cref{thm:algorithmic_stability} is meant to obtain a point satisfying the source condition, which leaves aside the notion of mirror alignment that we introduced and discussed in \Cref{thm:robust_align_first_order}. We now explore a Bregman variant of the previous approach, recovering the previous computations in the particular case of quadratic $F$.

\medskip

Throughout, $F$ is differentiable and strictly convex on $\calH$ and $F^\star$ is differentiable on $\operatorname{int}(\dom(F^\star))$, as in Section~\ref{sec:controlled_gaps}. For $\tau>0$ we define the \emph{tilted Bregman proximal step}
\begin{subequations}
\label{eq:mirror_prox_sub}
\begin{equation}\label{eq:mirror_prox_step}
    \bar h\;\coloneqq\;\argmin_{h'\in\calH}\Big\{u(h')+\langle\Phi\mu,h'\rangle_\calH+\frac{1}{\tau}\,F^\star(h'\mid h)\Big\},
\end{equation}
and we assume that the minimum is attained at some $\bar h$ with $h,\bar h\in\operatorname{int}(\dom(F^\star))$. The first-order optimality condition of~\eqref{eq:mirror_prox_step} reads
\begin{equation}\label{eq:mirror_prox_subgrad}
    \bar v\;\coloneqq\;-\Phi\mu+\frac{1}{\tau}\big(\nabla F^\star(h)-\nabla F^\star(\bar h)\big)\;\in\;\partial u(\bar h).
\end{equation}
\end{subequations}
In the quadratic case $F=\frac12\|\cdot-y\|_\calH^2$ one has $F^\star(h'\mid h)=\frac12\|h'-h\|_\calH^2$, since the data $y$ only enters the linear part of $F^\star$, so~\eqref{eq:mirror_prox_step} is $\operatorname{prox}_{\tau u}(h-\tau\Phi\mu)$ and~\eqref{eq:mirror_prox_subgrad} is~\eqref{eq:BR_proximal_formula}: the step reduces to the construction of Theorem~\ref{thm:algorithmic_stability}. The tilt $\langle\Phi\mu,\cdot\rangle_\calH$ uses the prediction $\Phi\mu$ as a surrogate of the dual gradient $\nabla F^\star(h)$, and by~\eqref{eq:FY_equals_Bregman} the surrogate error is the fidelity loss itself, $F(\Phi\mu\mid\nabla F^\star(h))=L_F(\Phi\mu\parallel h)$. The generalization of Carlier's proximal construction is a mirror proximal step with kernel $F^\star$, tilted by the prediction.

\medskip

The first result extends the Br\o ndsted--Rockafellar bounds of Theorem~\ref{thm:algorithmic_stability}. It does not use any lift to a primal certificate.

\begin{prop}[Bregman Br\o ndsted--Rockafellar]
    \label{prop:mirror_BR_bound}
Let \Cref{ass:BR_regularizer} hold and let $(\mu,h)\in\dom R\times\dom u$ satisfy $\lambda\,L_R(\mu\parallel\eta)\le\epsilon$ with $\eta=-\Phi^\star h/\lambda$. For $\tau>0$, let $\bar h$ and $\bar v\in\partial u(\bar h)$ be given by~\eqref{eq:mirror_prox_sub}. Then $\bar h\in\dom u$, so the proxy is dual-feasible, $\bar\eta\coloneqq-\Phi^\star\bar h/\lambda\in\dom R^\star$, and
\begin{equation}\label{eq:mirror_BR_pairing}
    \frac{1}{\tau}\Big[F^\star(\bar h\mid h)+F^\star(h\mid\bar h)\Big]
    \;=\;\big\langle\bar v-(-\Phi\mu),\,h-\bar h\big\rangle_\calH\;\le\;\epsilon.
\end{equation}
If moreover $\nabla F$ is $M$-Lipschitz then 
\[
\|\bar h-h\|_\calH\le\sqrt{M\tau\epsilon}\,,
\]
and if $F$ is $\gamma$-strongly convex then 
\[
\|\bar v-(-\Phi\mu)\|_\calH\le\sqrt{\epsilon/(\gamma\tau)}\,.
\]
The quadratic case, where $M=\gamma=1$, recovers~\eqref{eq:BR_proxy_subequations}.
\end{prop}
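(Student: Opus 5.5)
The plan is to reduce everything to the Fenchel--Young inequality for the pulled-back regularizer $u$, as in Carlier's constructive argument, and then to convert the resulting pairing bound into norm bounds using strong convexity or smoothness of $F^\star$. Throughout, write $p\coloneqq-\Phi\mu$.

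\emph{Dual feasibility.} The objective in~\eqref{eq:mirror_prox_step} is finite at $h'=h$, because $h\in\dom u$ and $F^\star(h\mid h)=0$. The minimizer $\bar h$ is assumed to be attained, so its value is finite. Hence $u(\bar h)<+\infty$, that is $\bar h\in\dom u$, and therefore $\bar\eta\in\dom R^\star$.

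\emph{The equality in~\eqref{eq:mirror_BR_pairing}.} This is a direct computation. From~\eqref{eq:mirror_prox_subgrad},
\[
\bar v-p=\tau^{-1}\bigl(\nabla F^\star(h)-\nabla F^\star(\bar h)\bigr).
\]
Adding the two Bregman divergences $F^\star(\bar h\mid h)$ and $F^\star(h\mid\bar h)$, the function values cancel. What remains is exactly the symmetrized pairing $\langle\nabla F^\star(h)-\nabla F^\star(\bar h),h-\bar h\rangle_\calH$.

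\emph{The inequality $\le\epsilon$.} First, Proposition~\ref{prop:BR_gap_inequality} gives $L_u(h\parallel p)\le\lambda L_R(\mu\parallel\eta)\le\epsilon$. Second, since $\bar v\in\partial u(\bar h)$, Fenchel--Young equality holds: $u(\bar h)+u^\star(\bar v)=\langle\bar h,\bar v\rangle_\calH$. Third, the subgradient inequality gives two lower bounds:
\begin{itemize}
\item $u(h)\ge u(\bar h)+\langle\bar v,h-\bar h\rangle_\calH$;
\item $u^\star(p)\ge u^\star(\bar v)+\langle\bar h,p-\bar v\rangle_\calH$, using $\bar h\in\partial u^\star(\bar v)$.
\end{itemize}
Inserting both into $L_u(h\parallel p)=u(h)+u^\star(p)-\langle h,p\rangle_\calH$ and using the Fenchel--Young equality, all function values cancel. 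This leaves
\[
L_u(h\parallel p)\ \ge\ \langle\bar v-p,\,h-\bar h\rangle_\calH .
\]
That is the claimed bound. It is precisely the step in Carlier's proof of \citep[Theorem~4.1]{carlier2023fenchel}, with the Euclidean resolvent replaced by the mirror condition~\eqref{eq:mirror_prox_subgrad}.

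\emph{Norm bounds.} If $\nabla F$ is $M$-Lipschitz, then $F^\star$ is $1/M$-strongly convex. Hence each Bregman divergence of $F^\star$ is at least $\tfrac{1}{2M}\|h-\bar h\|_\calH^2$, so their sum is at least $\tfrac1M\|h-\bar h\|^2_\calH$. This gives $\|h-\bar h\|_\calH^2\le M\tau\epsilon$.

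If $F$ is $\gamma$-strongly convex, then $F^\star$ is finite on $\calH$ with a $1/\gamma$-Lipschitz gradient. By Baillon--Haddad co-coercivity,
\[
\langle\nabla F^\star(h)-\nabla F^\star(\bar h),h-\bar h\rangle_\calH\ \ge\ \gamma\|\nabla F^\star(h)-\nabla F^\star(\bar h)\|_\calH^2=\gamma\tau^2\|\bar v-p\|_\calH^2 .
\]
Combining this with the pairing bound gives $\gamma\tau^2\|\bar v-p\|_\calH^2\le\tau\epsilon$, which is the second estimate. In the quadratic case $M=\gamma=1$, and both bounds reduce to~\eqref{eq:BR_proxy}.

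I expect the only delicate point to be domain bookkeeping in the co-coercivity step. Co-coercivity needs $F^\star$ to be differentiable with Lipschitz gradient on all of $\calH$, not merely on $\operatorname{int}\dom F^\star$. Strong convexity of $F$ on $\calH$ ensures this, and I would check it first via the standard duality between strong convexity and smoothness in Hilbert spaces \citep{zalinescu2002convex}. In the Lipschitz case, the points $h$ and $\bar h$ already lie in $\operatorname{int}\dom F^\star$ by assumption. Strong convexity of $F^\star$ along the segment between them is needed there, and it follows from the same duality.
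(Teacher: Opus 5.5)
Your proposal is correct and follows essentially the paper's proof. The pairing bound comes from $L_u(h\parallel-\Phi\mu)\le\epsilon$ (Proposition~\ref{prop:BR_gap_inequality}), combined with the subgradient inequality at $(\bar h,\bar v)$ and the Fenchel--Young inequality $u^\star(-\Phi\mu)\ge\langle-\Phi\mu,\bar h\rangle_\calH-u(\bar h)$. You obtain that last inequality by pairing the Fenchel--Young equality with $\bar h\in\partial u^\star(\bar v)$, which is only a repackaging of the same step. The norm bounds then follow from the $(1/M)$-strong convexity of $F^\star$ and from co-coercivity, exactly as in the paper; your explicit check that $\bar h\in\dom u$, via finiteness of the objective at $h'=h$, is a small detail the paper leaves implicit.
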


\begin{proof}
By Proposition~\ref{prop:BR_gap_inequality}, $L_u(h\parallel-\Phi\mu)\le\lambda L_R(\mu\parallel\eta)\le\epsilon$. The Fenchel--Young inequality at $\bar h$ gives $u^\star(-\Phi\mu)\ge\langle-\Phi\mu,\bar h\rangle_\calH-u(\bar h)$, hence
\[
\epsilon\;\ge\;u(h)+u^\star(-\Phi\mu)+\langle\Phi\mu,h\rangle_\calH\;\ge\;u(h)-u(\bar h)+\langle\Phi\mu,h-\bar h\rangle_\calH\,.
\]
The subgradient inequality $u(h)\ge u(\bar h)+\langle\bar v,h-\bar h\rangle_\calH$ then yields $\epsilon\ge\langle\bar v+\Phi\mu,h-\bar h\rangle_\calH$, and by~\eqref{eq:mirror_prox_subgrad} together with the definition of the Bregman divergence,
\[
\langle\bar v+\Phi\mu,h-\bar h\rangle_\calH
=\frac{1}{\tau}\big\langle\nabla F^\star(h)-\nabla F^\star(\bar h),\,h-\bar h\big\rangle_\calH
=\frac{1}{\tau}\Big[F^\star(\bar h\mid h)+F^\star(h\mid\bar h)\Big].
\]
If $\nabla F$ is $M$-Lipschitz then $F^\star$ is $(1/M)$-strongly convex, so the bracket is at least $\frac{1}{M}\|h-\bar h\|_\calH^2$. If $F$ is $\gamma$-strongly convex then $\nabla F^\star$ is $(1/\gamma)$-Lipschitz, so by co-coercivity $\langle\nabla F^\star(h)-\nabla F^\star(\bar h),h-\bar h\rangle_\calH\ge\gamma\|\nabla F^\star(h)-\nabla F^\star(\bar h)\|_\calH^2=\gamma\tau^2\|\bar v+\Phi\mu\|_\calH^2$.
\end{proof}

The second result presents an exact identity for the duality gap.

\begin{prop}[Exact gap identity for the tilted step, $\tau=1$]\label{prop:mirror_gap_identity}
In the setting of Proposition~\ref{prop:mirror_BR_bound} with $\tau=1$, suppose in addition that $\bar v$ lifts to a primal certificate: 
\begin{subequations}
\begin{equation}\label{eq:mirror_residual_requirement}
\text{there is $\bar\mu\in\calB^\star$ with $\bar v=-\Phi\bar\mu$ and $\bar\eta=-\frac{\Phi^\star\bar h}{\lambda}\in\partial R(\bar\mu)$.}
\end{equation}
Write $g\coloneqq\nabla F^\star(h)$, $\bar g\coloneqq\nabla F^\star(\bar h)$ and the \emph{mirror residual} $r\coloneqq\Phi\mu-g$. Then the step preserves the residual,
\begin{equation}\label{eq:mirror_residual_preserved}
    \Phi\bar\mu-\bar g\;=\;\Phi\mu-g\;=\;r\,,
\end{equation}
and the duality gap satisfies
\begin{equation}\label{eq:mirror_gap_identity}
    \Delta(\mu,h)-\Delta(\bar\mu,\bar h)\;=\;\lambda\,L_R(\mu\parallel\eta)\;+\;\big[F(g+r\mid g)-F(\bar g+r\mid\bar g)\big]\,.
\end{equation}
\end{subequations}
\end{prop}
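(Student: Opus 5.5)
The plan is to compute both duality gaps separately with the decomposition of Theorem~\ref{thm:dual_gap} and subtract them. The residual identity~\eqref{eq:mirror_residual_preserved} is what lets the two data-fidelity losses be written in the same form, so I will prove it first.

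\textbf{Step 1: residual preservation.} With $\tau=1$, the optimality condition~\eqref{eq:mirror_prox_subgrad} reads $\bar v=-\Phi\mu+g-\bar g$. The lift hypothesis~\eqref{eq:mirror_residual_requirement} gives $\bar v=-\Phi\bar\mu$. Equating the two expressions yields $\Phi\bar\mu=\Phi\mu-g+\bar g$, that is $\Phi\bar\mu-\bar g=\Phi\mu-g=r$. This is~\eqref{eq:mirror_residual_preserved}, and it is purely algebraic.

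\textbf{Step 2: gap at the iterate.} Since $\mu\in\dom R$ and $h\in\dom u$, Theorem~\ref{thm:dual_gap} gives the finite identity $\Delta(\mu,h)=L_F(\Phi\mu\parallel h)+\lambda L_R(\mu\parallel\eta)$. Because $h\in\operatorname{int}\dom F^\star$, the Bregman identity~\eqref{eq:FY_equals_Bregman} (Lemma~\ref{lem:bregman_identity}) applies with $u=\Phi\mu$ and gives
\[
L_F(\Phi\mu\parallel h)=F(\Phi\mu\mid\nabla F^\star(h))=F(g+r\mid g),
\]
where the last equality uses $\Phi\mu=g+r$.

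\textbf{Step 3: gap at the proxy.} First I check that Theorem~\ref{thm:dual_gap} applies at $(\bar\mu,\bar h)$. The inclusion $\bar\eta\in\partial R(\bar\mu)$ forces $R(\bar\mu)<\infty$, so $\bar\mu\in\dom R$. Proposition~\ref{prop:mirror_BR_bound} gives $\bar h\in\dom u$, so $\bar\eta\in\dom R^\star$. Finally, $\Phi\bar\mu\in\dom F$ because $F$ is finite on $\calH$. The source condition then gives $L_R(\bar\mu\parallel\bar\eta)=0$ by Lemma~\ref{lem:fy_properties}. Using $\bar h\in\operatorname{int}\dom F^\star$ and~\eqref{eq:FY_equals_Bregman} again, together with $\Phi\bar\mu=\bar g+r$ from Step~1,
\[
\Delta(\bar\mu,\bar h)=L_F(\Phi\bar\mu\parallel\bar h)=F(\bar g+r\mid\bar g).
\]

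Subtracting Step~3 from Step~2 gives~\eqref{eq:mirror_gap_identity}. As a sanity check, in the quadratic case $F(x+r\mid x)=\tfrac12\|r\|_\calH^2$ for every $x$, so the bracket vanishes and the gap decreases by exactly $\lambda L_R(\mu\parallel\eta)$, as announced in the introduction.

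\textbf{Main obstacle.} The only delicate point is bookkeeping of domains, not any real estimate. Each appeal to~\eqref{eq:FY_equals_Bregman} needs its dual point, $h$ or $\bar h$, in $\operatorname{int}\dom F^\star$, so that $\nabla F^\star$ is the unique subgradient and $\nabla F(\nabla F^\star(\cdot))$ recovers that point; this is exactly the standing assumption made after~\eqref{eq:mirror_prox_step}. Both gaps must also be finite so that the subtraction is legitimate, which holds because $h,\bar h\in\dom u$.
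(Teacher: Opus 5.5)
Your proposal is correct and follows the paper's proof essentially step for step. You obtain residual preservation from the $\tau=1$ optimality condition together with the lift $\bar v=-\Phi\bar\mu$, get $L_R(\bar\mu\parallel\bar\eta)=0$ from the source condition, apply the Bregman identity at $h$ and at $\bar h$, and subtract the two gap decompositions; your explicit checks that both gaps are finite (e.g.\ that $\bar\mu$ lies in the domain of $R$ and $\bar\eta$ in that of $R^\star$) are details the paper leaves implicit.
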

\noindent
In the quadratic case $F(p+r\mid p)=\frac12\|r\|_\calH^2$ does not depend on the base point $p$, the bracket vanishes, and~\eqref{eq:mirror_gap_identity} gives $\Delta(\bar\mu,\bar h)=\Delta(\mu,h)-\lambda L_R(\mu\parallel\eta)$ for the quadratic case. Note also that by Proposition~\ref{prop:BR_primal_lift}, the requirement \eqref{eq:mirror_residual_requirement} holds under the stated constraint qualification, and unconditionally for a norm regularizer. 

\begin{proof}
From~\eqref{eq:mirror_prox_subgrad} with $\tau=1$ and $\bar v=-\Phi\bar\mu$ we get $\Phi\bar\mu=\Phi\mu-g+\bar g$, which is~\eqref{eq:mirror_residual_preserved}. Since $\bar\eta\in\partial R(\bar\mu)$ we have $L_R(\bar\mu\parallel\bar\eta)=0$. By the Bregman identity~\eqref{eq:FY_equals_Bregman}, $L_F(\Phi\mu\parallel h)=F(\Phi\mu\mid\nabla F^\star(h))=F(g+r\mid g)$ and, using~\eqref{eq:mirror_residual_preserved}, $L_F(\Phi\bar\mu\parallel\bar h)=F(\bar g+r\mid\bar g)$. Subtracting the two decompositions of the gap,
\[
\Delta(\mu,h)-\Delta(\bar\mu,\bar h)=L_F(\Phi\mu\parallel h)-L_F(\Phi\bar\mu\parallel\bar h)+\lambda\,L_R(\mu\parallel\eta)\,,
\]
gives~\eqref{eq:mirror_gap_identity}.
\end{proof}

The drift term in~\eqref{eq:mirror_gap_identity} is second order: it is controlled by the squared residual times the move of the mirror point, and the move is controlled by Proposition~\ref{prop:mirror_BR_bound}.

\begin{coro}[Gap bound with a second-order remainder]\label{coro:mirror_gap_remainder}
In the setting of Proposition~\ref{prop:mirror_gap_identity}, assume moreover that $F$ is $\gamma$-strongly convex and twice G\^ateaux differentiable with 
\[
\text{$\|\nabla^2F(p)-\nabla^2F(q)\|_{\operatorname{op}}\le L_2\|p-q\|_\calH$ for all $p,q\in\calH$.}
\]
Then
\begin{equation}\label{eq:mirror_gap_remainder}
    \Delta(\bar\mu,\bar h)\;\le\;\Delta(\mu,h)\;-\;\lambda\,L_R(\mu\parallel\eta)\;+\;\frac{L_2}{\gamma^{3/2}}\,L_F(\Phi\mu\parallel h)\,\sqrt{\lambda\,L_R(\mu\parallel\eta)}\,.
\end{equation}
\end{coro}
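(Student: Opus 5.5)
The plan is to start from the exact identity~\eqref{eq:mirror_gap_identity} of Proposition~\ref{prop:mirror_gap_identity}. It already gives
\[
\Delta(\bar\mu,\bar h)=\Delta(\mu,h)-\lambda L_R(\mu\parallel\eta)-\big[F(g+r\mid g)-F(\bar g+r\mid\bar g)\big].
\]
So~\eqref{eq:mirror_gap_remainder} will follow once the drift term is bounded in absolute value:
\[
\big|F(g+r\mid g)-F(\bar g+r\mid\bar g)\big|\le \frac{L_2}{\gamma^{3/2}}\,L_F(\Phi\mu\parallel h)\sqrt{\lambda L_R(\mu\parallel\eta)}.
\]
I will get this as the product of three estimates: a Lipschitz bound for the Bregman divergence in its base point, a bound on $\|r\|^2$ by $L_F$, and a bound on $\|g-\bar g\|$ by $\sqrt{\lambda L_R}$.

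First, write the Bregman divergence in Taylor integral form. Since $F$ is twice Gâteaux differentiable, for any base point $p$ one has
\[
F(p+r\mid p)=\int_0^1(1-t)\,\langle\nabla^2F(p+tr)\,r,r\rangle_\calH\,\mathrm dt .
\]
This follows by applying the one-dimensional Taylor formula with integral remainder to $t\mapsto F(p+tr)$. Subtracting the formulas at $p=g$ and $p=\bar g$ and using the Lipschitz bound on $\nabla^2F$, I obtain
\[
\big|F(g+r\mid g)-F(\bar g+r\mid\bar g)\big|\le L_2\|g-\bar g\|_\calH\|r\|_\calH^2\int_0^1(1-t)\,\mathrm dt=\tfrac{L_2}{2}\|g-\bar g\|_\calH\|r\|_\calH^2 .
\]

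Second, control the two factors. For the residual, $\gamma$-strong convexity gives $F(g+r\mid g)\ge\frac{\gamma}{2}\|r\|^2$. By~\eqref{eq:FY_equals_Bregman}, $F(g+r\mid g)=L_F(\Phi\mu\parallel h)$, hence $\|r\|^2\le\frac{2}{\gamma}L_F(\Phi\mu\parallel h)$. For the mirror move, the optimality condition~\eqref{eq:mirror_prox_subgrad} with $\tau=1$ reads $g-\bar g=\bar v+\Phi\mu$. Proposition~\ref{prop:mirror_BR_bound}, used with the strong-convexity branch and the choice $\epsilon=\lambda L_R(\mu\parallel\eta)$ (allowed, as noted after Theorem~\ref{thm:algorithmic_stability}), then yields $\|g-\bar g\|\le\sqrt{\lambda L_R(\mu\parallel\eta)/\gamma}$.

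Multiplying these gives
\[
\tfrac{L_2}{2}\cdot\sqrt{\tfrac{\lambda L_R}{\gamma}}\cdot\tfrac{2}{\gamma}L_F=\frac{L_2}{\gamma^{3/2}}\,L_F(\Phi\mu\parallel h)\sqrt{\lambda L_R(\mu\parallel\eta)},
\]
which is exactly the claimed remainder.

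The main, though mild, obstacle is justifying the integral Taylor formula in the Hilbert setting with only Gâteaux second derivatives. I would handle it by restricting to the segment: $\phi(t)=F(p+tr)$ has $\phi''(t)=\langle\nabla^2F(p+tr)r,r\rangle$, and this is continuous in $t$ by the Lipschitz assumption on $\nabla^2F$, so the scalar formula applies. One also has to check that $g,\bar g$ are well defined, which holds since $h,\bar h\in\operatorname{int}\dom F^\star$ as assumed in~\eqref{eq:mirror_prox_sub}.
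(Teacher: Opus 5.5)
Your proof is correct and follows the paper's argument. It uses the same three estimates, multiplied in the same way: a base-point Lipschitz bound $\tfrac{L_2}{2}\|r\|_\calH^2\|g-\bar g\|_\calH$ on the drift, $\|r\|_\calH^2\le\tfrac{2}{\gamma}L_F(\Phi\mu\parallel h)$ from strong convexity, and $\|g-\bar g\|_\calH\le\sqrt{\lambda L_R(\mu\parallel\eta)/\gamma}$ from Proposition~\ref{prop:mirror_BR_bound} with $\tau=1$ and $\epsilon=\lambda L_R(\mu\parallel\eta)$.

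The only difference is in how you get the first estimate. You compare the integral remainders $\int_0^1(1-t)\langle\nabla^2F(p+tr)r,r\rangle_\calH\,\mathrm dt$ at $p=g$ and $p=\bar g$. The paper instead differentiates $\psi(p)=F(p+r\mid p)$ in the base point and applies the mean value inequality. Your variant is slightly more elementary, since it needs neither differentiability of $\psi$ in $p$ nor self-adjointness of $\nabla^2F$.
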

\noindent
No tolerance parameter appears: the bound uses Proposition~\ref{prop:mirror_BR_bound} with the choice $\epsilon=\lambda L_R(\mu\parallel\eta)$, which is admissible since $\epsilon$ need not be small, and every term on the right-hand side is computable from the pair~$(\mu,h)$. The remainder vanishes when $F$ is quadratic, where $L_2=0$: the proxy then improves the gap by exactly $\lambda L_R(\mu\parallel\eta)$.

\begin{proof}
Set $\psi(p)\coloneqq F(p+r\mid p)=F(p+r)-F(p)-\langle\nabla F(p),r\rangle_\calH$. For every $p\in\calH$ and every direction $w\in\calH$, using $\nabla F(p+r)-\nabla F(p)=\int_0^1\nabla^2F(p+sr)\,r\,\mathrm{d}s$ and the self-adjointness of $\nabla^2F(p)$,
\[
\mathrm{d}\psi(p)[w]=\Big\langle\int_0^1\big[\nabla^2F(p+sr)-\nabla^2F(p)\big]\,r\,\mathrm{d}s,\,w\Big\rangle_\calH\,,
\]
so $\|\mathrm{d}\psi(p)\|\le\int_0^1 L_2\,s\,\|r\|_\calH^2\,\mathrm{d}s=\frac{L_2}{2}\|r\|_\calH^2$ and, by the mean value inequality, $|\psi(g)-\psi(\bar g)|\le\frac{L_2}{2}\|r\|_\calH^2\,\|g-\bar g\|_\calH$. By $\gamma$-strong convexity, $L_F(\Phi\mu\parallel h)=F(g+r\mid g)\ge\frac{\gamma}{2}\|r\|_\calH^2$, and Proposition~\ref{prop:mirror_BR_bound} with $\tau=1$ and $\epsilon=\lambda L_R(\mu\parallel\eta)$ gives $\|g-\bar g\|_\calH=\|\bar v+\Phi\mu\|_\calH\le\sqrt{\lambda L_R(\mu\parallel\eta)/\gamma}$. Hence the bracket in~\eqref{eq:mirror_gap_identity} is at least $-\frac{L_2}{2}\cdot\frac{2}{\gamma}L_F(\Phi\mu\parallel h)\cdot\sqrt{\lambda L_R(\mu\parallel\eta)/\gamma}$, which is~\eqref{eq:mirror_gap_remainder}.
\end{proof}

\begin{coro}[When the tilted step improves the gap]\label{coro:mirror_descent_criterion}
Write $\delta\coloneqq L_F(\Phi\bar\mu\parallel\bar h)-L_F(\Phi\mu\parallel h)$ for the change of fidelity loss and $\kappa\coloneqq\tfrac1\tau\big[F^\star(\bar h\mid h)+F^\star(h\mid\bar h)\big]\ge0$. Since $L_R(\bar\mu\parallel\bar\eta)=0$, the gap decomposition gives, for every $\tau>0$,
\begin{equation}\label{eq:mirror_exact_margin}
    \Delta(\mu,h)-\Delta(\bar\mu,\bar h)=\lambda\,L_R(\mu\parallel\eta)-\delta\,,
\end{equation}
so the tilted step improves the gap if and only if $\delta\le\lambda L_R(\mu\parallel\eta)$. Two sufficient regimes make this explicit.
\begin{enumerate}[label=\textnormal{(\alph*)}]
\item \emph{Step-size balance.} By Proposition~\ref{prop:mirror_BR_bound} with $\epsilon=\lambda L_R(\mu\parallel\eta)$ one has $\kappa\le\lambda L_R(\mu\parallel\eta)$ unconditionally, so $\delta\le\kappa$ suffices, with margin $\lambda L_R(\mu\parallel\eta)-\delta\ge\kappa-\delta\ge0$.
\item \emph{Near-quadratic, $\tau=1$.} Under the hypotheses of Corollary~\ref{coro:mirror_gap_remainder}, improvement holds whenever $\tfrac{L_2}{\gamma^{3/2}}\,L_F(\Phi\mu\parallel h)\le\sqrt{\lambda L_R(\mu\parallel\eta)}$, and always for quadratic $F$ $(L_2=0)$, where the improvement equals $\lambda L_R(\mu\parallel\eta)$.
\end{enumerate}
\end{coro}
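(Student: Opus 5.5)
The plan is to read Corollary~\ref{coro:mirror_descent_criterion} off the gap decomposition of Theorem~\ref{thm:dual_gap} at the two pairs $(\mu,h)$ and $(\bar\mu,\bar h)$, and then to bound $\delta$ using the estimates already obtained for the tilted step. Throughout we work in the setting of Proposition~\ref{prop:mirror_gap_identity}, with general $\tau>0$ for \eqref{eq:mirror_exact_margin}: $\bar v$ lifts to $\bar\mu$ with $\bar v=-\Phi\bar\mu$ and $\bar\eta\in\partial R(\bar\mu)$. This lift is unconditional for norm regularizers by Proposition~\ref{prop:BR_primal_lift}, and its argument does not depend on $\tau$.

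\textbf{Exact margin.} By Lemma~\ref{lem:fy_properties}, the source condition $\bar\eta\in\partial R(\bar\mu)$ gives $L_R(\bar\mu\parallel\bar\eta)=0$. Theorem~\ref{thm:dual_gap} therefore yields $\Delta(\bar\mu,\bar h)=L_F(\Phi\bar\mu\parallel\bar h)$ and $\Delta(\mu,h)=L_F(\Phi\mu\parallel h)+\lambda L_R(\mu\parallel\eta)$. Both are finite, since $h,\bar h\in\dom u$ and $F$ is finite. Subtracting the two gives \eqref{eq:mirror_exact_margin} with $\delta$ as defined. The statement that the step improves the gap if and only if $\delta\le\lambda L_R(\mu\parallel\eta)$ is then immediate, reading ``improves'' as non-increase of $\Delta$.

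\textbf{Regime (a).} Apply Proposition~\ref{prop:mirror_BR_bound} with the admissible choice $\epsilon=\lambda L_R(\mu\parallel\eta)$. As remarked after Theorem~\ref{thm:algorithmic_stability}, no smallness is needed, and the proof of Proposition~\ref{prop:mirror_BR_bound} works for any $\epsilon\ge\lambda L_R$. Inequality \eqref{eq:mirror_BR_pairing} then reads $\kappa\le\lambda L_R(\mu\parallel\eta)$, and $\kappa\ge0$ holds because Bregman divergences are nonnegative. If $\delta\le\kappa$, the margin in \eqref{eq:mirror_exact_margin} satisfies $\lambda L_R-\delta\ge\kappa-\delta\ge0$.

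\textbf{Regime (b).} With $\tau=1$, the margin is $\lambda L_R-\delta=\Delta(\mu,h)-\Delta(\bar\mu,\bar h)$. Corollary~\ref{coro:mirror_gap_remainder} gives
\[
\Delta(\mu,h)-\Delta(\bar\mu,\bar h)\ge\lambda L_R-\frac{L_2}{\gamma^{3/2}}L_F(\Phi\mu\parallel h)\sqrt{\lambda L_R}=\sqrt{\lambda L_R}\Bigl(\sqrt{\lambda L_R}-\frac{L_2}{\gamma^{3/2}}L_F(\Phi\mu\parallel h)\Bigr).
\]
Under the stated condition the right-hand side is nonnegative. The factorization also covers the degenerate case $L_R=0$. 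For quadratic $F$ we have $L_2=0$, and the bracket in \eqref{eq:mirror_gap_identity} vanishes, so the improvement equals $\lambda L_R$ exactly.

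\textbf{Main obstacle.} The only delicate point is bookkeeping, not analysis. One must check that the primal lift and the finiteness of both gaps hold for arbitrary $\tau$, since Proposition~\ref{prop:mirror_gap_identity} was stated only for $\tau=1$. One must also check that $\epsilon=\lambda L_R$ is a legitimate tolerance in Proposition~\ref{prop:mirror_BR_bound}. Both follow because those proofs never use the value of $\tau$ or any smallness of $\epsilon$.
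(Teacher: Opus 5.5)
Your proposal is correct and follows the paper's own argument: you subtract the two gap decompositions using $L_R(\bar\mu\parallel\bar\eta)=0$, apply Proposition~\ref{prop:mirror_BR_bound} with $\epsilon=\lambda L_R(\mu\parallel\eta)$ for (a), and rearrange the bound of Corollary~\ref{coro:mirror_gap_remainder} for (b). One bookkeeping fix: the finiteness of $L_F(\Phi\mu\parallel h)$ and $L_F(\Phi\bar\mu\parallel\bar h)$ comes from the standing assumption $h,\bar h\in\operatorname{int}(\dom(F^\star))$ of the tilted step, not from $h,\bar h\in\dom u$.
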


\begin{remark}[Scope of the tilted step]\label{rem:mirror_step_scope}
{\ }
\begin{enumerate}[label=(\roman*)]
\item \textbf{Free parameter $\tau$.} For $\tau\neq1$ the residual is not preserved: $\Phi\bar\mu-\bar g=r+(1-\tfrac{1}{\tau})(g-\bar g)$, and~\eqref{eq:mirror_gap_identity} picks up an extra first order term. Proposition~\ref{prop:mirror_BR_bound} keeps the free parameter $\tau$, as in Theorem~\ref{thm:algorithmic_stability}.
\item \textbf{Local constants.} For the losses of Section~\ref{sec:applications}, the constants $\gamma$ and $L_2$ are finite on sublevel sets only in general; in the Kullback--Leibler and entropic cases $L_2$ blows up at the boundary of $\dom F$, so~\eqref{eq:mirror_gap_remainder} should be applied on a sublevel set containing the iterates.
\end{enumerate}
\end{remark}

\section{Applications: specific fidelities and regularizers}\label{sec:applications}

In this section we apply the F+R framework to a catalogue of popular data-fidelity terms and regularizers, deriving for each loss its Fenchel--Young geometry (primal $F$, dual $F^\star$, and discrepancy $L_F$) and for each penalty the triple $(R,R^\star,L_R)$, so that the duality gap
$\Delta(\mu,h)=L_F(\Phi\mu\parallel h)+\lambda L_R(\mu\parallel\eta)$ of Theorem~\ref{thm:dual_gap} becomes explicit on each problem class. Three of our results hold for \emph{every} data-fidelity term below: the early-stopping certification of Theorem~\ref{thm:algorithmic_stability}, whose hypotheses (Assumption~\ref{ass:BR_regularizer}) constrain only the regularizer~$R$; this gap decomposition, in the general form of Theorem~\ref{thm:dual_gap} since several of these losses leave $\calF_0$; and the Fenchel--Young error bounds of Theorem~\ref{thm:FY_discrepancy_error_bounds}, which need only a proper, l.s.c.\ convex $F$, a source-condition certificate, and $\objmeas(\mu^\star)<+\infty$ (that is $\Phi\mu^\star\in\dom F$, automatic when $\dom F=\calH$). 

The strictly-convex $\calC^1$ class $\calF_0$ is needed only for the Mirror-Alignment characterization, the uniqueness of the aligned certificate, and, through the weak-coercivity bound~\eqref{eq:hyp_coercivite_faible}, the existence of a minimizer. Least squares, Poisson with positive counts $y>0$, and the data-coupled iOT primal~\eqref{eq:iUOT_primal_actual} with $\pi^{\mathrm{obs}}>0$ lie in $\calF_0$; the logistic, Huber, squared-hinge, and Kullback--Leibler do not, and we record each loss's $\calF_0$ status in the loss-by-loss computations of Appendix~\ref{sec:datafidelity_FYL_app}.  
The exact-recovery analysis built on these geometries is developed in the companion paper~\citep{aubin2026localization}.

\subsection{Specific data-fidelity terms}
\label{subsec:data-fidelity}
\subsubsection{Common losses and their Fenchel conjugates}

Each common data-fidelity term is separable, $F(h)=\sum_{i=1}^n f(h_i)$, and likewise $F^\star(w)=\sum_i f^\star(w_i)$ and $L_F(h\parallel w)=\sum_i \ell_F(h_i,w_i)$, with $y_i$ the paired observation. Table~\ref{tab:common_losses} collects the scalar per-coordinate summands for six continuously differentiable ($\calC^1$) losses (least squares, logistic, Poisson, Kullback--Leibler, Huber, squared-hinge); the full derivations and the scope condition of each loss are in Appendix~\ref{sec:datafidelity_FYL_app}.

\begin{table}[htbp]
\centering
\small
\setlength{\tabcolsep}{5pt}
\renewcommand{\arraystretch}{1.5}
\begin{tabular}{@{}lllll@{}}
\toprule
Loss & $f(h)$ & $f^\star(w)$ & $\ell_F(h,w)$ & $\calF_0$\\
\midrule
Least squares & $\tfrac12(h-y)^2$ & $\tfrac12 w^2+wy$ & $\tfrac12(h-w-y)^2$ & Yes\\
Logistic & $\log(1+e^{-yh})$ & \shortstack[c]{$(-yw)\log(-yw)$\\ $+\,(1+yw)\log(1+yw)$} & $f(h)+f^\star(w)-wh$ & No\\
Poisson & $e^{h}-yh$ & $(w+y)\log(w+y)-(w+y)$ & $\KL(w+y\parallel e^{h})$ & Yes $(y>0)$\\
Kullback--Leibler & $h\log\tfrac{h}{y}-h+y$ & $y(e^{w}-1)$ & $\KL(h\parallel y e^{w})$ & No\\
Huber & $\rho_\delta(h-y)$ & $\tfrac12 w^2+wy$ & $f(h)+f^\star(w)-wh$ & No\\
Squared hinge & $\tfrac12\max(0,1-yh)^2$ & $\tfrac12 w^2+yw$ & $f(h)+f^\star(w)-wh$ & No\\
\bottomrule
\end{tabular}
\caption{Common continuously differentiable ($\calC^1$) data-fidelity losses and their Fenchel conjugates, as scalar per-coordinate summands ($F(h)=\sum_i f(h_i)$, and likewise for $F^\star$ and $L_F$, with $y_i$ the paired observation). In every row $\ell_F(h,w)=f(h)+f^\star(w)-hw$; the closed forms shown for least squares, Poisson and Kullback--Leibler follow by simplification. The conjugate $f^\star$ is finite only on $-yw\in[0,1]$ (logistic), $w+y\ge0$ (Poisson), $|w|\le\delta$ (Huber) and $-yw\ge0$ (squared hinge), and is $+\infty$ otherwise; $\rho_\delta(r)=\tfrac12 r^2$ for $|r|\le\delta$ and $\delta(|r|-\tfrac12\delta)$ otherwise; Poisson requires $y_i\ge0$ and Kullback--Leibler requires $y_i>0$, and the Kullback--Leibler primal $f$ (hence its Fenchel--Young loss $\ell_F$) is defined only on $h\ge0$, with the convention $0\log0=0$; the logistic and squared-hinge rows assume binary labels $y_i\in\{-1,+1\}$, under which the tabulated conjugates hold. 
The last column records membership in the class $\calF_0$ of Definition~\ref{def_assumption:fidelity} (finite-valued on $\calH$, $\calC^1$, strictly convex, and weakly coercive~\eqref{eq:hyp_coercivite_faible}). 
}
\label{tab:common_losses}
\end{table}

\begin{subequations}
\label{eq:iUOT}
\subsubsection{Inverse optimal transport (iOT)}

In the iOT setting, following \citet[Section 2]{andrade2024sparsistency}, the goal is to learn a cost matrix $C \in \mathbb{R}^{n \times n}$ from an observed (unbalanced) transport plan $\pi^{\mathrm{obs}} \in \mathbb{R}_+^{n \times n}$.

\medskip

\paragraph{Inverse optimal transport.}
Given $\bbX$ and $\bbY$ two compact subsets of $\R^d$, a continuous cost $c:\bbX\times \bbY \to \R$, probability measures $\alpha\in \calP(\bbX)$, $\beta \in \calP(\bbY)$, and the set $\Pi(\alpha,\beta)\subset \calP(\bbX\times \bbY)$ of couplings with marginals $\alpha$ and $\beta$, the problem of entropic optimal transport is to find $\pi^* \in \argmin_{\pi \in \Pi(\alpha,\beta)} \bracket{c,\pi}+\varepsilon \KL(\pi | \alpha \otimes \beta)$. Denoting by $\Omega(\pi)\coloneqq \varepsilon \KL(\pi | \alpha \otimes \beta) + \iota_{\{\pi \in \Pi(\alpha,\beta)\}}(\pi)$, first-order conditions give that $-c \in \partial \Omega(\pi^*)$. This suggests to introduce the corresponding Fenchel-Young loss. However in iOT, instead of looking for $\pi^*$ given a $c$, we look for $c$ given a $\pi^{\mathrm{obs}}$. This leads to considering 
\begin{equation}\label{eq:FYL_iOT}
    F_{\mathrm{iOT}}(c)\coloneqq L_{\Omega}(-c \parallel \pi^{\mathrm{obs}})\;=\; \Omega^*(-c) \;+\;  \bracket{c,\,\pi^{\mathrm{obs}}} \;+\; \Omega(\pi^{\mathrm{obs}}).
\end{equation}
Known duality computations on $\Omega^*(-c)$ give that
\begin{equation}
\begin{aligned}
    \Omega^*(-c)&=\sup_{\pi \in \Pi(\alpha,\beta)} \bracket{-c,\pi}-\varepsilon \KL(\pi | \alpha \otimes \beta)\\
    &=-\sup_{f\in \calC(\bbX),g\in \calC(\bbY)} \Big\{\bracket{f,\alpha}+\bracket{g,\beta}- \varepsilon \int \Big(\exp{\Big( \frac{f+g-c}{\varepsilon}\Big)}-1\Big) \mathrm{d}\alpha(x) \mathrm{d}\beta(y)\Big\}
\end{aligned}
\end{equation}
Note that $\pi^* \in \argmin 
\{\bracket{c,\pi}+\varepsilon \KL(\pi | \alpha \otimes \beta)\,:\,\pi \in \Pi(\alpha,\beta)\}$ is invariant, due to the marginals constraints, when replacing $c$ by $\tilde c=c-f-g$ for any $f,g$. Hence we can arbitrarily consider a reduced problem defined on $\tilde c$, that is equivalent to \eqref{eq:FYL_iOT} when dropping the constant term~$\Omega(\pi^{\mathrm{obs}})$,
\begin{equation}
    \label{eq:iUOT_primal_actual_continuous}
    F_{\mathrm{red.iOT}}(\tilde c) \;\coloneqq \; \bracket{\tilde c,\,\pi^{\mathrm{obs}}}+ \varepsilon \int \exp{\Big( \frac{-\tilde c}{\varepsilon}\Big)} \mathrm{d}\alpha(x) \mathrm{d}\beta(y)
\end{equation}
Instead of $\alpha$ and $\beta$, one has in practice samples $\hat \alpha=\frac{1}{n}\sum_{i=1}^n \delta_{x_i}$ and $\hat\beta=\frac{1}{m}\sum_{j=1}^m \delta_{y_j}$ deduced from the marginals of the discrete $\pi^{\mathrm{obs}}$. Unbalanced settings would appear from handling the mismatch between $(\alpha,\beta)$ and $(\hat \alpha,\hat \beta)$. For simplicity we assume the latter coincide and can then write the problem in matrix form $C_{i,j}=\tilde c(x_i,y_j)$ and with functional
\begin{equation}
    \label{eq:iUOT_primal_actual}
    F(C) \;\coloneqq \; \frac{\varepsilon}{m n}\sum_{i,j}\exp\!\left(-\frac{C_{i,j}}{\varepsilon}\right) \;+\; \bigl\langle C,\,\pi^{\mathrm{obs}}\bigr\rangle.
\end{equation}
where the key term is given by
\begin{equation}
    \label{eq:iUOT_generator}
    F_{\mathrm{gen}}(C) \;=\; \varepsilon \sum_{i,j} \exp\!\left(-\frac{C_{i,j}}{\varepsilon}\right),
\end{equation}
The gradient $\nabla F(C) = -\pi_\varepsilon(C)+\pi^{\mathrm{obs}}$, for $[\pi_\varepsilon(C)]_{i,j}\coloneqq \frac{1}{mn}e^{-C_{i,j}/\varepsilon}\ge 0$, vanishes precisely at $C=C^{\mathrm{obs}}\coloneqq -\varepsilon\log\pi^{\mathrm{obs}}$ when $\pi^{\mathrm{obs}}>0$ entrywise. 
The dual computation, Fenchel--Young loss, and local sharpening below are written for $F_{\mathrm{gen}}$ for concision; the corresponding objects for $F$ differ by a known affine shift in the dual variable ($p\mapsto p-\pi^{\mathrm{obs}}$) which does not affect Hessians, Fenchel--Young loss values along the regularization path, or the certificate analysis. Similarly we incorporate the term $mn$ into the definition of $C$.

\paragraph{Dual data-fidelity and gradient.}
    The Fenchel conjugate $F_{\mathrm{gen}}^\star: \mathbb{R}^{n \times n} \to \mathbb{R} \cup \{+\infty\}$ is the negative generalized Boltzmann--Shannon entropy:
\begin{equation}
        F_{\mathrm{gen}}^\star(\pi) = 
    \begin{cases} 
    \varepsilon \sum_{i,j} (-\pi_{i,j}) \left( \log(-\pi_{i,j}) - 1 \right) & \text{if } \pi \le 0, \\
    +\infty & \text{otherwise.}
    \end{cases}
\end{equation}
The gradient of the dual maps a transport plan back to the cost that generated it:
\begin{equation}
        [\nabla F_{\mathrm{gen}}^\star(\pi)]_{i,j} = -\varepsilon \log(-\pi_{i,j}).
\end{equation}
This confirms the inverse relationship: if $\pi = -\exp(-C/\varepsilon)$, then $C = -\varepsilon \log(-\pi)$.

\paragraph{Fenchel--Young loss.}
The Fenchel--Young loss $L_{F}(C \parallel -\pi^{\mathrm{obs}})$ corresponds to the generalized Kullback--Leibler divergence:
\begin{equation}
    L_{F_{\mathrm{gen}}}(C \parallel -\pi^{\mathrm{obs}}) = \varepsilon \KL(\pi^{\mathrm{obs}} \parallel \pi_\varepsilon(C)) = \varepsilon \sum_{i,j} \left( \pi^{\mathrm{obs}}_{i,j} \log\left(\frac{\pi^{\mathrm{obs}}_{i,j}}{e^{-C_{i,j}/\varepsilon}}\right) - \pi^{\mathrm{obs}}_{i,j} + e^{-C_{i,j}/\varepsilon} \right).
\end{equation}
\end{subequations}

\subsection{Popular regularizers and their Fenchel--Young geometry}
\label{subsec:regularizers}

We now consider the regularizer side $R$ and record, for a catalogue of standard sparsity-promoting penalties, the regularizer $R$, its Fenchel conjugate $R^\star$, and the associated Fenchel--Young loss $L_R$. We order them so that the canonical instance, the total-variation norm of signed measures (the GBL regularizer), comes first, followed by its vector-valued extension to group/joint TV, then \emph{analysis sparsity} (the regularizer acting on a linear transform $\mu=Lu$, subsuming TV-of-gradient as $L=\nabla$) and the \emph{nuclear norm} on low-rank operators.

\begin{subequations}
\subsubsection{Regularization by the total variation norm (sparse spikes)}
\label{subsubsec:TV_regularizer}

We consider the classical problem of recovering a sparse signed measure $\mu \in \calM(\calX)$ (a sum of Dirac masses) from measurements. The regularizer is the total variation norm (or Radon norm).

\paragraph{Primal and dual functions.}
The regularizer $R: \calM(\calX) \to \mathbb{R}$ is the total variation norm:
\begin{equation}
    R(\mu) = \|\mu\|_{\TV} \coloneqq  \sup \left\{ \int_{\calX} f(x) \,\mathrm{d}\mu(x) \;\middle|\; f \in \calC_0(\calX), \|f\|_\infty \le 1 \right\}.
\end{equation}
The pre-dual space is $\calC_0(\calX)$ equipped with the supremum norm. The Fenchel conjugate $R^\star$ is the indicator function of the unit ball of continuous functions:
\begin{equation}
    R^\star(f) = \iota_{\{\|f\|_\infty \le 1\}}(f) = 
    \begin{cases} 
    0 & \text{if } \sup_{x \in \calX} |f(x)| \le 1, \\
        +\infty & \text{otherwise}
    \end{cases}
\end{equation}

\paragraph{Fenchel--Young loss.}
The associated Fenchel--Young loss penalizes the lack of correlation between the measure and the dual certificate (or can be seen as a slack in H\"older's inequality), while enforcing the dual constraint:
\begin{equation}
    L_R(\mu \parallel f) = 
    \begin{cases} 
    \|\mu\|_{\TV} - \langle f, \mu \rangle_{\calC, \calM} & \text{if } \|f\|_\infty \le 1, \\
        +\infty & \text{otherwise}
    \end{cases}
\end{equation}
This loss is zero if and only if $f$ is a subgradient of the TV norm at $\mu$, meaning $f$ saturates the norm constraints on the support of $\mu$ with the correct sign.

\end{subequations}

\begin{subequations}
\subsubsection{Regularization by group sparsity (vector-valued measures)}
\label{subsubsec:group_sparsity}

We consider the recovery of a vector-valued measure $\mu \in \calM(\calX; \mathbb{R}^d)$ where the channels are coupled. The goal is to recover spikes that share the same support across all $d$ dimensions (joint sparsity).

\paragraph{Primal and dual functions.}
The regularizer $R$ is the total variation of the vector measure, defined using the Euclidean norm $|\cdot|_2$ on $\mathbb{R}^d$ (acting as a group-Lasso penalty in the continuum):
\begin{equation}
    R(\mu) = \|\mu\|_{\calM, 2} \coloneqq  \int_{\calX} \left| \frac{\mathrm{d}\mu}{\mathrm{d}|\mu|}(x) \right|_2 \,\mathrm{d}|\mu|(x),
\end{equation}
where $\frac{\mathrm{d}\mu}{\mathrm{d}|\mu|}$ is the Radon--Nikod\'ym derivative (the local orientation vector) and $|\mu|$ denotes the total-variation (Euclidean) measure of the vector-valued $\mu$.
The pre-dual space is $\calC(\calX; \mathbb{R}^d)$, paired with $\calM(\calX; \mathbb{R}^d)$ via $\langle f, \mu \rangle_{\calC, \calM} = \int_\calX \langle f(x), \mathrm{d}\mu(x) \rangle_{\mathbb{R}^d}$. The Fenchel conjugate $R^\star$ is the indicator of the unit ball of the dual norm (the sup-norm of the pointwise Euclidean norms):
\begin{equation}
    R^\star(f) = \iota_{K_R}(f) \quad \text{with} \quad K_R \coloneqq  \left\{ f \in \calC(\calX; \mathbb{R}^d) \;\middle|\; \sup_{x \in \calX} |f(x)|_2 \le 1 \right\}.
\end{equation}

\paragraph{Fenchel--Young loss.}
The associated Fenchel--Young loss enforces the ``block-constraint'' on the dual certificate:
\begin{equation}
    L_R(\mu \parallel f) = 
    \begin{cases} 
    \|\mu\|_{\calM, 2} - \langle f, \mu \rangle_{\calC, \calM} & \text{if } f \in K_R, \\
        +\infty & \text{otherwise}
    \end{cases}
\end{equation}
This loss is zero if and only if $f \in \partial R(\mu)$, i.e.\ $|f(x)|_2 = 1$ and $f(x)$ is aligned with $\frac{\mathrm{d}\mu}{\mathrm{d}|\mu|}(x)$ on $\operatorname{supp}\mu$.

\end{subequations}

\begin{subequations}

\subsubsection{Sparse analysis prior}
\label{subsubsec:analysis_sparsity}

We consider the recovery of a signal $u\in\calU$ (a Banach signal space, e.g.\ $L^2(\R^d)$ or a Sobolev space) by imposing sparsity on its analysis transform $\mu = Lu$, where $L:\calU\to\calM(\calX)$ is a bounded linear operator (the analysis operator). Following the analysis-sparsity representer theorem of \citet{boyer2019representer}, this is equivalent to optimizing over the measure $\mu$ with a structural constraint. The special case $L=\nabla$ (TV regularization of a gradient) admits a simple Meyer-norm dual and is treated separately in Section~\ref{subsubsec:TV_gradient}.

\paragraph{Standing assumption.} Throughout this subsection we assume $\operatorname{Im}(L)$ is \emph{weak-$*$ closed} in $\calM(\calX)$ (equivalently $\operatorname{Im}(L)=\operatorname{Ker}(L^\star)^\perp$, the annihilator of $\operatorname{Ker}(L^\star)$ in $\calM(\calX)$), so that the regularizer $R$ below is weak-$*$ l.s.c.\ and $R=R^{\star\star}$.

\paragraph{Primal and dual functions.}
The regularizer $R: \calM(\calX) \to \mathbb{R} \cup \{+\infty\}$ combines the total variation norm with the hard constraint that the measure must belong to the (weak-$*$ closed) image of the analysis operator:
\begin{equation}
    R(\mu) = \|\mu\|_{\TV} + \iota_{\operatorname{Im}(L)}(\mu) =
    \begin{cases}
    \|\mu\|_{\TV} & \text{if } \exists u \in \calU, \mu = L u, \\
        +\infty & \text{otherwise.}
    \end{cases}
\end{equation}
The dual function $R^\star$ is the indicator of a quotient-norm ball: a dual vector $f \in \calC_0(\calX)$ is feasible when its sup-norm distance to $\operatorname{Ker}(L^\star)$, the kernel of the pre-adjoint operator $L^\star:\calC(\calX)\to\calU^\star$, is at most one,
\begin{equation}
    R^\star(f) = \iota_{\{\operatorname{dist}_\infty(\cdot,\operatorname{Ker}(L^\star)) \le 1\}}(f),
    \qquad
    \operatorname{dist}_\infty\big(f,\operatorname{Ker}(L^\star)\big)\coloneqq\inf_{g \in \operatorname{Ker}(L^\star)} \|f - g\|_\infty,
\end{equation}
i.e.\ $R^\star$ is the indicator of the unit ball of the quotient norm on $\calC_0(\calX)/\operatorname{Ker}(L^\star)$; the standing assumption ($\operatorname{Im}(L)$ weak-$*$ closed) identifies $\operatorname{Im}(L)$ with the dual of this quotient space, which gives the formula. The exact infimal convolution $\inf_{g\in\operatorname{Ker}(L^\star)}\iota_{\{\|\cdot\|_\infty\le1\}}(f-g)$ takes the same value when the distance is attained; in general the conjugate is its lower semi-continuous closure, because the distance to a closed subspace of $\calC(\calX)$ need not be attained.

\paragraph{Fenchel--Young loss.}
The Fenchel--Young loss $L_R$ penalizes the lack of an ``analysis sparsity'' certificate. For a measure $\mu \in \operatorname{Im}(L)$ and a dual certificate $f$:
\begin{equation}
    L_R(\mu \parallel f) =
    \begin{cases}
    \|\mu\|_{\TV} - \langle f, \mu \rangle_{\calC, \calM} & \text{if } \mu \in \operatorname{Im}(L) \text{ and } \operatorname{dist}_\infty(f,\operatorname{Ker}(L^\star)) \le 1, \\
        +\infty & \text{otherwise.}
    \end{cases}
\end{equation}
The loss is finite only when $\operatorname{dist}_\infty(f,\operatorname{Ker}(L^\star))\le 1$, i.e.\ when $f$ is close to a function bounded by one, up to a component $g \in \operatorname{Ker}(L^\star)$ invisible to the signal space.

\end{subequations}

\begin{subequations}
\label{eq:regularizers_TV_gradient}
\subsubsection{Regularization by the total variation norm of the gradient}
\label{subsubsec:TV_gradient}

Throughout this subsection, $\calX\subset\R^d$ is a \emph{bounded Lipschitz domain}. It treats the special case $L=\nabla$ of analysis sparsity (Section~\ref{subsubsec:analysis_sparsity}): the regularizer is the TV norm of a gradient field $\mu=\nabla u$ of a function of bounded variation. We keep it as a separate subsubsection because the Meyer-norm structure of the dual needs its own derivation, as it is specific to the divergence operator $L^\star=-\operatorname{div}$ (the sign is immaterial here, as the dual constraint set $K_G$ below is symmetric under $f\mapsto-f$).

\paragraph{Primal and dual functions.}
Let $\calB^\star = \calM(\calX; \mathbb{R}^d)$ be the space of vector-valued Radon measures. We consider the regularization of the gradient of a function $u$ of bounded variation, writing 
\[
BV_0(\calX)\coloneqq\big\{u\in BV(\R^d):u=0\text{ a.e.\ on }\R^d\setminus\calX\big\}
\]
for the $BV$ functions vanishing outside $\calX$. The regularizer $R: \calM(\calX; \mathbb{R}^d) \to \mathbb{R} \cup \{+\infty\}$ is defined as the total variation norm restricted to gradient fields:
\begin{equation}
    R(\mu) = 
    \begin{cases} 
    \|\mu\|_{\TV} & \text{if } \mu = \nabla u \text{ for some } u \in BV_0(\calX), \\
        +\infty & \text{otherwise}
    \end{cases}
\end{equation}
where the equality $\mu = \nabla u$ holds in the distributional sense. The constraint set $\{\nabla u:u\in BV_0(\calX)\}$ is \emph{weak-$*$ closed} in $\calM(\calX;\R^d)$: if $\nabla u_n\rightharpoonup^*\mu$ with $u_n\in BV_0(\calX)$, then, assuming $\calX$ is a bounded Lipschitz domain in $\R^d$, the Poincar\'e inequality bounds $(u_n)$ in $BV$, the compact embedding $BV\hookrightarrow L^1$ gives a limit $u\in BV_0(\calX)$ (identifying $u$ with its extension by zero to $\R^d$), and $\nabla u=\mu$. Hence $R$ is weak-$*$ l.s.c.\ and the standing assumption of Section~\ref{subsubsec:analysis_sparsity} holds here.
The pre-dual space is $\mathcal{B} = \mathcal{C}(\mathcal{X}; \mathbb{R}^d)$. Following the framework of \citet{de2023towards}, the Fenchel conjugate $R^\star$ is the indicator function of the convex set $K_G$ defined by a constraint on the divergence:
\begin{equation}
    R^\star(f) = \iota_{K_G}(f) \quad \text{with} \quad K_G \coloneqq  \left\{ f \in \calC(\calX; \mathbb{R}^d) \;\middle|\; \left| \int_E \operatorname{div}(f) \right| \le \operatorname{Per}(E),
    \quad \forall E \subset \calX \right\},
\end{equation}
where $E$ ranges over sets of finite perimeter in $\mathcal{X}$. This condition is equivalent to $\|\operatorname{div}(f)\|_G \le 1$, where $\|\cdot\|_G$ denotes the Meyer norm (the dual of the $BV$ semi-norm). The Meyer norm $\|\cdot\|_G$, introduced in \citet{meyer2001oscillating}, is defined on the space of distributions $G$ that can be represented as the divergence of a bounded vector field. Formally:
\begin{equation}
    \|v\|_G \coloneqq  \inf \left\{ \|g\|_{L^\infty(\calX; \mathbb{R}^d)} \;\middle|\; v = \operatorname{div}(g), \, g \in L^\infty(\calX; \mathbb{R}^d) \right\}.
\end{equation}
Equivalently, the identity $\|v\|_G=\sup\{\langle u,v\rangle : u\in BV_0(\calX),\ \|\nabla u\|_{\TV}\le 1\}$ exhibits $\|\cdot\|_G$ as the dual norm of the $BV$ semi-norm (consistent with the description above).

\paragraph{Fenchel--Young loss.}
The associated Fenchel--Young loss reads:
\begin{equation}
    L_R(\mu \parallel f) = 
    \begin{cases} 
    \|\mu\|_{\TV} - \langle f, \mu \rangle_{\calC, \calM} & \text{if } \mu = \nabla u \text{ for some } u \in BV_0(\calX) \text{ and } f \in K_G, \\
        +\infty & \text{otherwise}
    \end{cases}
\end{equation}

\paragraph{Stability under primal non-uniqueness.}
The regularizer $R$ is the TV norm restricted to gradient fields, hence positively $1$-homogeneous and not strictly convex, so primal uniqueness is generally not guaranteed in the Meyer-norm setting. The total duality gap nevertheless decomposes as
\begin{equation}
    \Delta(\mu, h) \;=\; \underbrace{L_{F}(\Phi\mu \parallel h)}_{\text{prediction-space control}} \;+\; \lambda\,\underbrace{L_{R}(\mu \parallel \eta)}_{\text{source-condition control}}.
\end{equation}
For any $\mu$ with $\objmeas(\mu)\le\objmeas(\mu^\star)$, the prediction bound $L_F(\Phi\mu\parallel h^\star)\le L_F(\Phi\mu^\star\parallel h^\star)$ (Theorem~\ref{thm:FY_discrepancy_error_bounds}) and Carlier's sharpened Fenchel--Young inequality \citep[Lemma~1.1]{carlier2023fenchel} applied at $(\Phi\mu,h^\star)$ imply that the squared Hilbert distance of $(\Phi\mu,\tau h^\star)$ to the graph of $\tau\partial F$ is at most $2\tau\,L_F(\Phi\mu^\star\parallel h^\star)$ (cf.\ Theorem~\ref{thm:FY_discrepancy_error_bounds}~\eqref{eq:obj_decomposition_bound_Carlier}); the prediction $\Phi\mu$ is thus tethered to the optimal one even when the primal measure is non-unique. Beyond this static control, the algorithmic stability of Section~\ref{sec:approx_source_cond} (Theorem~\ref{thm:algorithmic_stability}) supplies an exactly feasible dual proxy for any iterate with $\lambda L_R(\mu\parallel\eta)\le\epsilon$, so the Meyer-norm setting also enjoys the early-stopping guarantee.
\end{subequations}

\begin{subequations}
\subsubsection{Regularization by nuclear norm}\label{subsubsec:nuclear_norm}

We consider low-rank matrix recovery, where the optimization variable is a matrix $\mu \in \mathbb{R}^{n \times m}$ (identified here as the measure). The regularizer is the nuclear norm (or Schatten 1-norm), which promotes low-rank solutions.

\paragraph{Primal and dual functions.}
Let $R: \mathbb{R}^{n \times m} \to \mathbb{R}$ be the nuclear norm defined by the sum of the singular values:
\begin{equation}
    R(\mu) \coloneqq  \|\mu\|_* = \sum_{i=1}^{\min(n,m)} \sigma_i(\mu).
\end{equation}
The pre-dual space is $\mathbb{R}^{n \times m}$ itself, self-dual under the trace inner product $\langle f, \mu \rangle = \operatorname{tr}(f^\top \mu)$. The dual norm is the operator norm (spectral norm), denoted $\|\cdot\|_{\mathrm{op}}$ (the largest singular value). The Fenchel conjugate $R^\star$ is the indicator function of the spectral unit ball:
\begin{equation}
    R^\star(f) = \iota_{\{\|\cdot\|_{\mathrm{op}} \le 1\}}(f) = 
    \begin{cases} 
        0 & \text{if } \|f\|_{\mathrm{op}} \le 1, \\
            +\infty & \text{otherwise.}
    \end{cases}
\end{equation}

\paragraph{Fenchel--Young loss.}
The associated Fenchel--Young loss $L_R$ measures the gap between the nuclear norm and the linear pairing with the dual variable. It enforces the spectral constraint $\|f\|_{\mathrm{op}} \le 1$ as a hard (indicator) constraint on the dual certificate:
\begin{equation}
    L_R(\mu \parallel f) = 
    \begin{cases} 
        \|\mu\|_* - \langle f, \mu \rangle & \text{if } \|f\|_{\mathrm{op}} \le 1, \\
            +\infty & \text{otherwise.}
    \end{cases}
\end{equation}
This loss is zero if and only if $f$ is a subgradient of the nuclear norm at $\mu$: for a reduced SVD $\mu = U \operatorname{diag}(\sigma) V^\top$, this holds if and only if $f = U V^\top + W$ with $\|W\|_{\mathrm{op}} \le 1$, $U^\top W = 0$ and $W V = 0$.

\end{subequations}

\section{Numerical experiments}
\label{sec:experiments}
We focus on the Fenchel--Young gap $\Delta = L_F + \lambda L_R$ of Theorem~\ref{thm:dual_gap}. Throughout this section, $J$ denotes the primal objective $\objmeas$ of~\eqref{eq:convex_program_J} (and its finite-dimensional instances). 
Section~\ref{subsec:exp_certificate} runs on a synthetic 1D Gaussian super-resolution BLASSO problem and tracks $\Delta_k$ along the Conic Particle Gradient Descent (CPGD) of \citet{chizat2022sparse,decastro2023fastpart}.

This multiplicative amplitude update is a mirror-descent step for the total-variation regularizer of Section~\ref{sec:controlled_gaps}. The dual certificate $\eta_k$ converges to the source condition while the inset gap $\Delta_k$ decreases with no oracle, and the decomposition of $\Delta_k$ tracks the true optimality gap (and, as the theory guarantees, dominates it), which is the operational early-stopping signal. Section~\ref{subsec:exp_lion_muon} applies the same scalar diagnostic to the deep-learning optimizers Lion-K and Muon, certifying that in proximal form they solve the $F+\lambda R$ program, the gap $\Delta_t$ decreasing to $0$ along the trajectory. Throughout, the single scalar $\Delta$ bounds the distance to optimality without knowing the optimum.
An open-source notebook reproduces the figures below in full.\footnote{Code and companion notebook available at \url{https://github.com/ydecastro/fenchel-young-gaps}.}

We call Mirror-Alignment dual certificate (at iterate $\mu_k$) the continuous function $\eta_k$ defined by
\[
\eta_k = -\frac{\Phi^\star \nabla F(\Phi \mu_k)} \lambda.
\]
We call the \emph{rescaled} Mirror-Alignment dual the continuous function $\tilde\eta_k$, which satisfies dual feasibility $\tilde h_k\in K_\lambda$, defined by
\begin{subequations}
\begin{equation}
    \label{eq:dual_rescaling}
    \begin{aligned}
    \tilde h_k &\;\coloneqq \; c_k\,h_k\,,
    \qquad
    h_k \;=\; \nabla F(\Phi\mu_k)\,,
    \qquad
    \tilde\eta_k \;=\; -\Phi^\star\tilde h_k/\lambda \;=\; c_k\,\eta_k\,,
    \\
    c_k &\;\coloneqq \; \min\Bigl(1,\,\frac{\lambda}{\|\Phi^\star h_k\|_\infty}\Bigr) \;=\; \min\Bigl(1,\,\frac1{\|\eta_k\|_\infty}\Bigr)\in[0,1]\,;\\
    K_\lambda&\;\coloneqq\Big\{h'\in\calH:\|\Phi^\star h'\|_\infty\le\lambda\Big\}\,,
    \qquad \tilde h_k\in K_\lambda\,;
    \end{aligned}
\end{equation}
and the duality gap of the BLASSO program 
\begin{equation}
    \label{eq:exp_dual_gap}
    \Delta(\mu, h)
    \;=\;
    L_F(\Phi\mu \parallel h)
    \;+\;
    \lambda\, L_R(\mu \parallel \eta)\,,
    \qquad \eta = -\Phi^\star h / \lambda\,,
\end{equation}
\end{subequations}
which decomposes into a data-fidelity Fenchel--Young loss $L_F$ and a regularizer Fenchel--Young loss $L_R$. 

\begin{figure}[!t]
    \centering
    \includegraphics[width=\linewidth]{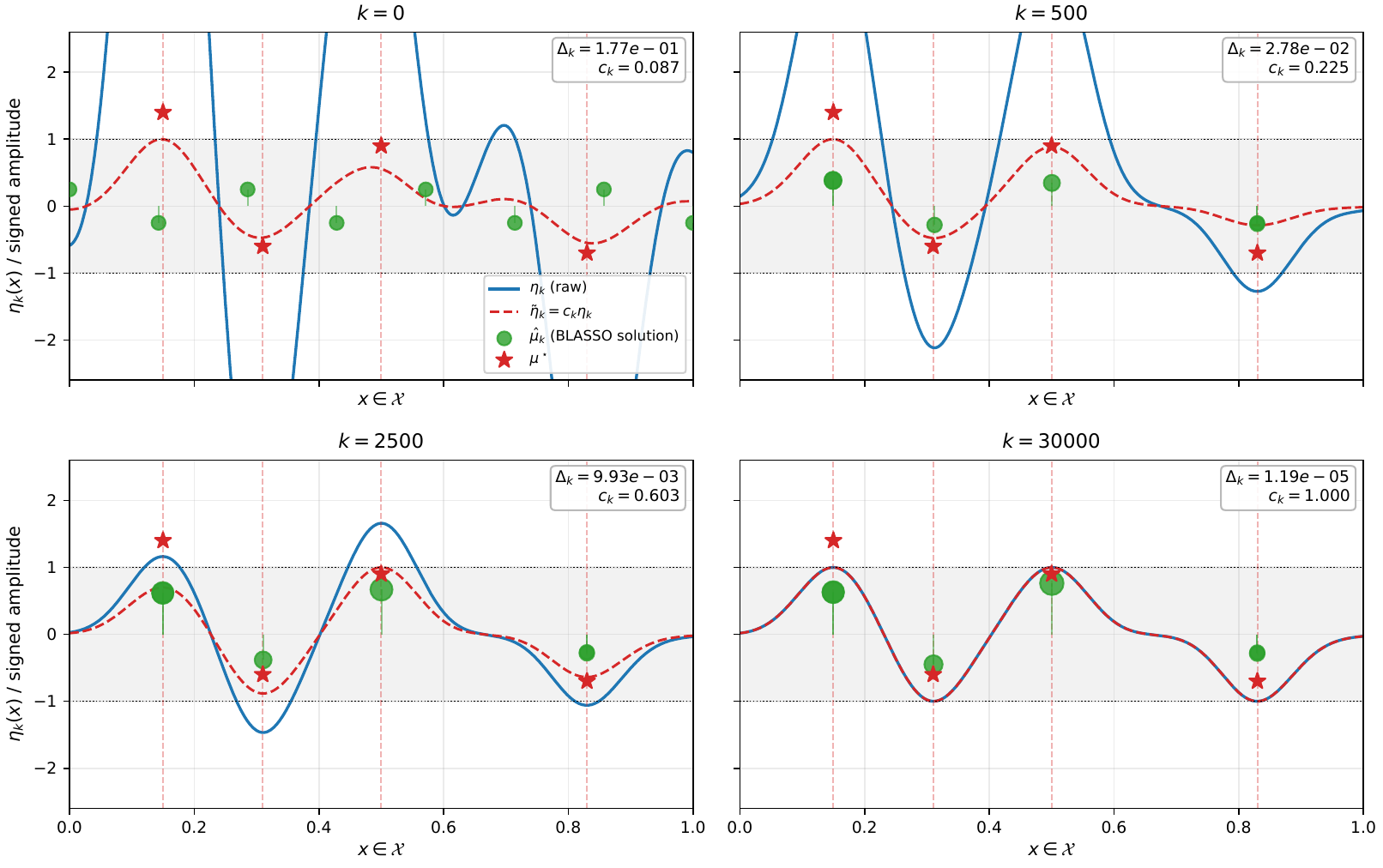}
    \caption{Dual certificate evolution along CPGD (synthetic 1D Gaussian). Each panel overlays, at iteration $k$: the Mirror-Alignment certificate $\eta_k=-\Phi^\star h_k/\lambda$ (blue); its rescaled contraction $\tilde\eta_k=c_k\eta_k$, $c_k=\min(1,1/\|\eta_k\|_\infty)$ (red dashed); the recovered BLASSO solution $\hat\mu_k$ as individual particles (green; one stem per active CPGD particle, marker size scaled by amplitude); and the truth $\mu^\star$ (red~stars).}
    \label{fig:certificate_evolution}
\end{figure}

\subsection{Dual certificate evolution along CPGD}\label{subsec:exp_certificate} 
Figure~\ref{fig:certificate_evolution} tracks the dual certificate $\eta_k$ along the CPGD trajectory of a synthetic 1D super-resolution problem: four spikes at $(0.15,0.31,0.50,0.83)$ with signed amplitudes $(+1.4,-0.6,+0.9,-0.7)$, Gaussian feature map of bandwidth $\sigma_\varphi=0.04$, $N=200$ samples, observation noise $\sigma_w=10^{-2}$, $\lambda=10^{-2}$, and $T=30\,000$ iterations, the particles being initialized on a uniform grid of $[0,1]$ with alternating signs.
The convergence pattern visible in Figure~\ref{fig:certificate_evolution} is the operational form of the early-stopping theory: the rescaled dual $\tilde h_k\in K_\lambda$ is dual-feasible at every iteration (Remark~\ref{rem:dual_rescaling}), and the Fenchel--Young duality gap $\Delta_k = L_F(\Phi\mu_k\,\|\,\tilde h_k)+\lambda L_R(\mu_k\,\|\,\tilde\eta_k)$ is an oracle-free optimality certificate (Theorem~\ref{thm:dual_gap} and~\eqref{eq:opt_gap_bound}) computed without any access to the unknown~$\mu^\star$; along the trajectory it is observed to decrease to zero. 

In Figure~\ref{fig:certificate_evolution}, the inset reports the oracle-free gap $\Delta_k$. Across the four snapshots $\Delta_k$ falls by more than four orders of magnitude ($1.8\!\times\!10^{-1}\to1.2\!\times\!10^{-5}$), $c_k\to1$, and $\|\eta_k\|_\infty\to1$ with on-support saturation $\eta_k(\hat x_i)\approx\operatorname{sign}(\hat a_i)$ at the active atoms $\hat a_i\delta_{\hat x_i}$ of $\mu_k$, the operational form of the source condition. Each spike is recovered by one or two atoms whose heights match the amplitude of the ground truth up to the standard $\sim\!10$--$25\%$ $\calO(\lambda)$ BLASSO shrinkage (positions exact to $\sim\!10^{-3}$).

\begin{subequations}
\noindent
Theorem~\ref{thm:dual_gap} applied at $(\mu_k,\tilde h_k)$ then delivers the finite, computable Fenchel--Young duality-gap decomposition
\begin{equation}
    \label{eq:honest_FY_gap}
    \Delta_k \;=\; \underbrace{L_F(\Phi\mu_k\parallel\tilde h_k)}_{\geq 0,\ =\,0\,\iff\,c_k=1}\;+\;\underbrace{\lambda\bigl(\|\mu_k\|_{\TV}-\langle\tilde\eta_k,\mu_k\rangle\bigr)}_{\geq 0,\ \text{finite}}\;\geq\;J(\mu_k)-\inf J\,.
\end{equation}
The data-fidelity piece $L_F$ in~\eqref{eq:honest_FY_gap} is generically non-zero off Mirror Alignment: trading exact Mirror Alignment ($L_F = 0$) for dual feasibility ($\tilde h_k\in K_\lambda$) is what produces a finite bound, and is unavoidable. In the least-squares case, the trade-off is $L_F(\Phi\mu_k\,\|\,\tilde h_k) = (1-c_k)^2\,F(\Phi\mu_k)$, which vanishes as $c_k\to 1$, i.e.\ once the iterate becomes dual-feasible, so $\Delta_k\to 0$ requires both pieces to vanish jointly.
\end{subequations}

\begin{figure}[!t]
    \centering
    \includegraphics[width=0.66\linewidth]{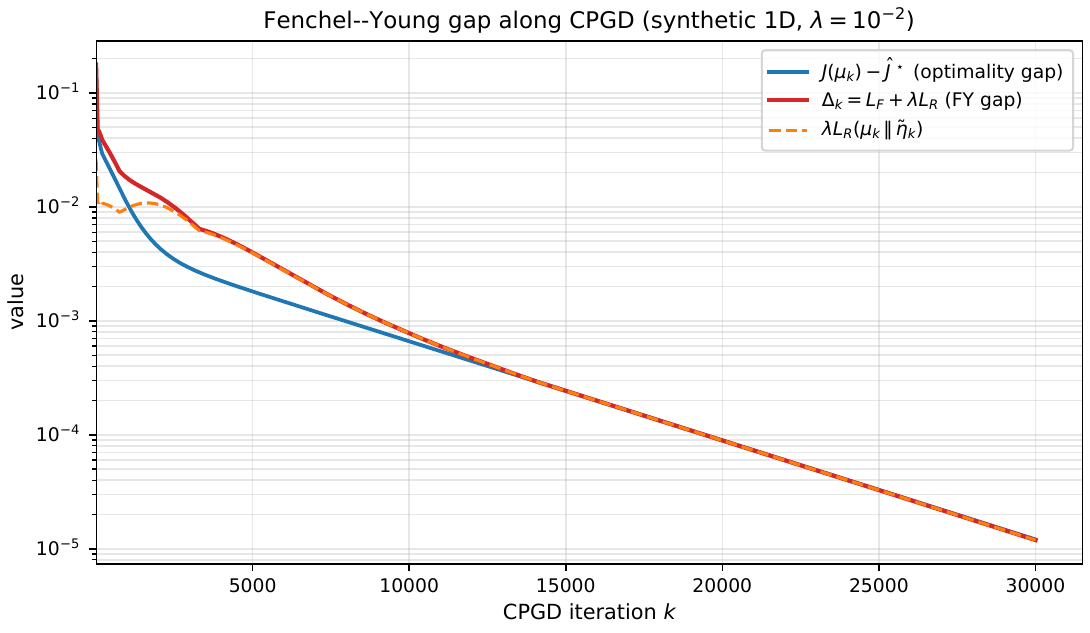}
    \caption{Fenchel--Young duality-gap decomposition $\Delta_k=L_F+\lambda L_R$ (Theorem~\ref{thm:dual_gap}) at the rescaled Mirror-Alignment dual $\tilde h_k=c_k\nabla F(\Phi\mu_k)$, along the trajectory of Figure~\ref{fig:certificate_evolution} ($T=30\,000$ iterations; semilog axes). Red: total Fenchel--Young gap $\Delta_k$; orange: the regularizer piece $\lambda L_R(\mu_k\parallel\tilde\eta_k)$, which tracks $\Delta_k$ once the data-fidelity piece $L_F=(1-c_k)^2F(\Phi\mu_k)$ has vanished ($c_k\to1$); blue: primal optimality gap $J(\mu_k)-\hat J^\star$, with $\hat J^\star$ the minimal objective over an extended reference run (twice the displayed horizon, minimum taken over the tail) as a proxy for $\inf J$.
    }
    \label{fig:fy_decomposition_synthetic}
\end{figure}

Figure~\ref{fig:fy_decomposition_synthetic} illustrates the oracle-free guarantee. The red curve $\Delta_k$ is computed from the iterate alone, yet it lies above the blue optimality gap $J(\mu_k)-\hat J^\star$ at every iteration (this is the inequality $\Delta_k\ge J(\mu_k)-\inf J$ of~\eqref{eq:opt_gap_bound}), and the two curves merge as $c_k\to1$, so the certificate becomes asymptotically tight.
Once the data-fidelity piece has vanished ($c_k=1$, $L_F=0$), the gap reduces to the single regularizer term~$\lambda L_R$, exactly the quantity whose sub-tolerance level Theorem~\ref{thm:algorithmic_stability} converts into a feasible Br\o ndsted--Rockafellar proxy. A practitioner targeting accuracy $\epsilon$ may therefore stop as soon as $\Delta_k\le\epsilon$ and certify $J(\mu_k)-\inf J\le\epsilon$ with no oracle. This is the early-stopping rule that the bound is meant to provide, certified by Theorem~\ref{thm:dual_gap} and~\eqref{eq:opt_gap_bound}. The reference value $\hat J^\star$ used to draw the blue curve is itself an oracle quantity (the minimal objective over an extended run); it is shown only to exhibit the bound, and plays no role in the certificate $\Delta_k$, which the optimizer computes online.

\subsection{Certifiability of Lion-K and Muon via the duality-gap decomposition}\label{subsec:exp_lion_muon}

The Fenchel--Young duality-gap decomposition $\Delta(\mu, h) = L_F(\Phi\mu \parallel h) + \lambda L_R(\mu \parallel \eta)$ of Theorem~\ref{thm:dual_gap} is optimizer-agnostic: it holds for any iterate $\mu_t$ produced by any optimizer solving the regularized program $\min_\mu F(\Phi\mu) + \lambda R(\mu)$, be it CPGD, ISTA, SVT, Lion-K, or Muon. Evaluated at the \emph{rescaled} Mirror-Alignment dual $\tilde h_t = c_t\,\nabla F(\Phi\mu_t)$ with $c_t = \min(1,\,\lambda/\|\Phi^\star\nabla F(\Phi\mu_t)\|_{R^\star})$, where $\|\cdot\|_{R^\star}$ is the norm whose unit ball is $\dom R^\star$ (the $\ell_\infty$ norm for $\ell_1$ regularization, the operator norm for the nuclear norm), so that $\tilde h_t$ is dual-feasible, the resulting Fenchel--Young gap $\Delta_t = L_F(\Phi\mu_t\parallel\tilde h_t) + \lambda L_R(\mu_t\parallel\tilde\eta_t)\ge J(\mu_t)-\inf J$~\eqref{eq:opt_gap_bound} is finite, non-negative, and computable from the iterate alone.

We apply this diagnostic to two non-convex optimizers used in the deep-learning community. \citet{chen2023lionk} introduced the Lion-K family, whose update applies the subgradient of a convex map $K$ (the $\operatorname{sign}$ nonlinearity for Lion); \citet{jordan2024muon} introduced Muon, whose update applies the matrix-sign $\operatorname{matsign}(G)=UV^\top$. Both nonlinearities are subgradients of a regularizer: $\operatorname{sign}(x)\in\partial\|x\|_1$ and $UV^\top\in\partial\|U\Sigma V^\top\|_*$. When this sign / matrix-sign step is used as the regularizer's \emph{proximal} operator (soft-thresholding $\operatorname{soft}_\tau(z)=\operatorname{sign}(z)\max(|z|-\tau,0)$ for $\ell_1$, and singular-value thresholding $U\operatorname{soft}_\tau(\Sigma)V^\top$ for the nuclear norm, with the polar factor $UV^\top$ supplied by Muon's matrix-sign / Polar-Express step), the resulting iteration is proximal gradient on $\min_\mu F(\Phi\mu)+\lambda R(\mu)$, namely ISTA and SVT, respectively. In this proximal form Lion-K and Muon solve exactly the $F+\lambda R$ program, and the Fenchel--Young gap certifies them. Figure~\ref{fig:lion_muon_delta} reports this on two ill-conditioned instances (a coherent, $\mathrm{AR}(1)$-correlated, $\ell_1$ compressed-sensing problem and a nuclear-norm matrix-sensing problem), chosen so that convergence is gradual rather than instantaneous: for both optimizers the oracle-free $\Delta_t$ decays steadily to zero while staying above the (oracle) optimality gap $J(\mu_t)-\hat J^\star$ along the trajectory, so the certification stays visible over the $10\,000$ iterations shown.

In summary, Lion-K and Muon share the same algorithmic template, a sign / matrix-sign step, which the source condition of Section~\ref{sec:controlled_gaps} (condition~\eqref{def:dual_certificate_definition}) identifies as the regularizer subgradient for $\ell_1$ and for the nuclear norm. Used as the proximal operator of that regularizer, the step yields proximal gradient on $\min_\mu F(\Phi\mu)+\lambda R(\mu)$, and the duality-gap decomposition of Figure~\ref{fig:lion_muon_delta} certifies, oracle-free, that the gap $\Delta_t\to 0$, hence that the $F+\lambda R$ program is solved. The companion notebook contains the full Lion-K / Muon / Newton--Schulz numerical implementation; we refer to \citet{amsel2026polar} for the polar-expansion analysis underlying the quintic Newton--Schulz iteration that approximates $\operatorname{matsign}$ in our Muon implementation.

\begin{remark}[Convex certificate for a non-convex deployment]
In their original deep-learning use, Lion-K and Muon are run on non-convex training objectives, whereas the certificate of this section targets the convex proximal-gradient surrogate $\min_\mu F(\Phi\mu)+\lambda R(\mu)$ that their ISTA/SVT proximal form solves: $\Delta_t$ certifies optimality \emph{for that convex program}, not for the non-convex training loss. This convex program is the informative object rather than a limitation, since by the mean-field/over-parameterization theory of \citet{chizat2018global,chizat2022sparse} over-parameterized non-convex gradient dynamics can realize the global minimizer of a convex problem lifted to the space of measures, the same style of lifting that underlies the CPGD trajectories of Section~\ref{subsec:exp_certificate}, see~\citep{decastro2023fastpart}. %
\end{remark}

The experiments make the same point in each setting. The oracle-free Fenchel--Young gap $\Delta=L_F+\lambda L_R$ of Theorem~\ref{thm:dual_gap} upper-bounds the primal optimality gap~\eqref{eq:opt_gap_bound} using only the running iterate (Figures~\ref{fig:certificate_evolution}--\ref{fig:fy_decomposition_synthetic}): it tracks the true gap, becomes asymptotically tight as the rescaling contraction $c_k\to1$, and supplies the early-stopping rule ``stop once $\Delta_k\le\epsilon$''. Because the construction is optimizer-agnostic, 
the same scalar certifies the deep-learning optimizers Lion-K and Muon: in proximal form their sign / matrix-sign step is the regularizer proximal operator, they solve $F+\lambda R$, and the gap $\Delta_t\to0$ along the trajectory (Figure~\ref{fig:lion_muon_delta}). For continuous BLASSO and for finite-dimensional sparse / low-rank recovery, certifying optimality reduces to computing a single scalar.

\section*{Conclusion and perspectives}
The Banach-space setting admits several relaxations. The measure space $\calM(\calX)$ may be replaced by any dual Banach space $\calB^\star$. Relaxing the Hilbert space $\calH$ to a reflexive Banach space would additionally require a counterpart of Carlier's inequality, which is where the Hilbert structure enters. A finer quantitative tracking of the Br\o ndsted--Rockafellar proxy under stochastic optimization noise is left for future work. Building on the error bounds established here, the companion paper~\citep{aubin2026localization} develops support localization and exact support recovery for the GBL under a non-degenerate source condition (verified through a Fisher-reweighted Local Positive Curvature pivot, with a Kernel Switch supplying sign-free localization), and attains the parametric statistical rate, up to a logarithmic factor, on the well-specified population logistic GBL.

\subsection*{Disclosure of AI use}
In line with the Leiden Declaration on Artificial Intelligence and Mathematics \citep{leiden2026declaration}, we disclose the use of AI agents (Claude Code, Anthropic) in the preparation of this paper. These agents were used to audit the statements and the proofs, through repeated adversarial verification passes recorded in a change-log, and to help with editorial tasks: consistency of notation, bibliography formatting, and typesetting. The authors produced and checked every result and every proof themselves and take sole responsibility for the correctness of the paper and for its citations.

\bibliographystyle{plainnat}
\bibliography{FYL_Gap}

\newpage

\appendix

\section{Technical lemmas}
\label{sec:lemmas_app}

We here regroup technical lemmas underlying some of our proofs. They sometimes recover well-known results in convex analysis, however, since our setting is naturally non-reflexive Banach spaces, it is easier to reprove the formulations we need rather than quote existing statements, often restricted to the Hilbert case in textbooks.

\subsection{On the forward operator}
\begin{lem}
    \label{lem:dual_Phi_banach}
    Let $\mathcal{B}$ be a Banach space and denote $\mathcal{B}^\star$ its dual space. Let $\mathcal{H}$ be a Hilbert space. Let $\Phi\,:\,\mathcal{B}^\star\to\mathcal{H}$ be a linear weak-$*$-to-weak continuous operator. Then its pre-adjoint $\Phi^\star\,:\,\mathcal{H}\to\mathcal{B}$ reads
    \[
    \Phi^\star\,:\,h\in\mathcal{H}\mapsto \big(b^\star\in\mathcal{B}^\star\mapsto\langle\varphi_{b^\star},h\rangle_{\mathcal H}\big)\in\mathcal{B}\,,
    \]
    where we identified the pre-dual space $\mathcal{B}$ with a subspace of the bidual $\mathcal{B}^{\star\star}$, and where $\varphi_{b^\star}\coloneqq \Phi\, b^\star$ for $b^\star\in\mathcal{B}^\star$.
\end{lem}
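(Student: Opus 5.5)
We have to show that for each $h\in\calH$ the map $b^\star\mapsto\langle\varphi_{b^\star},h\rangle_\calH=\langle\Phi b^\star,h\rangle_\calH$ is an element of $\calB$, viewed inside $\calB^{\star\star}$, and that it is the pre-adjoint of $\Phi$. The plan is to use the classical characterization of the weak-$*$ dual, $(\calB^\star,\sigma(\calB^\star,\calB))^\ast=\calB$: a linear functional on $\calB^\star$ is weak-$*$ continuous if and only if it is the evaluation at some element of $\calB$.

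The first step fixes $h\in\calH$ and considers $\ell_h:\calB^\star\to\R$, $\ell_h(b^\star)\coloneqq\langle\Phi b^\star,h\rangle_\calH$. This functional is linear because $\Phi$ is linear. It is also weak-$*$ continuous, since it is the composition of $\Phi$, which is weak-$*$-to-weak continuous by assumption, with $k\mapsto\langle k,h\rangle_\calH$, which is continuous for the weak topology of $\calH$ by the definition of that topology.

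The second step invokes the characterization above, which is a standard consequence of the fact that a linear functional continuous for the topology $\sigma(X,Y)$ of a dual pair is given by an element of $Y$; it is proved with the finite-intersection-of-kernels lemma. It yields a unique $f_h\in\calB$ with $\ell_h(b^\star)=\langle f_h,b^\star\rangle_{\calB,\calB^\star}$ for every $b^\star\in\calB^\star$. Uniqueness holds because $\calB^\star$ separates the points of $\calB$ by Hahn--Banach. We then define $\Phi^\star h\coloneqq f_h$. Under the canonical embedding $\calB\hookrightarrow\calB^{\star\star}$, $f_h$ is exactly the map $b^\star\mapsto\langle\varphi_{b^\star},h\rangle_\calH$, which is the displayed formula.

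The third step checks that $h\mapsto f_h$ is linear, which follows from uniqueness and the linearity of $h\mapsto\ell_h$. It also records the adjoint identity $\langle\Phi^\star h,b^\star\rangle_{\calB,\calB^\star}=\langle\Phi b^\star,h\rangle_\calH$, which is the defining property of the pre-adjoint used in Theorem~\ref{thm:dual_gap}. Boundedness of $\Phi^\star$ is optional. If we want it, we can get it from the closed graph theorem, or from the norm-boundedness of $\Phi$: a weak-$*$-to-weak continuous map is norm-to-norm bounded by uniform boundedness, so $\|f_h\|_\calB=\sup_{\|b^\star\|\le1}|\langle\Phi b^\star,h\rangle|\le\|\Phi\|\,\|h\|$.

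The main obstacle is the second step, the identification of weak-$*$ continuous functionals with elements of $\calB$ rather than of $\calB^{\star\star}$. Here I would cite the standard result (e.g.\ Rudin, Functional Analysis, Thm.~3.10) instead of reproving it. Everything else is bookkeeping.
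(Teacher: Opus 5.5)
Your proposal is correct and follows essentially the same route as the paper. Both arguments fix $h$, observe that $\ell_h(b^\star)=\langle\Phi b^\star,h\rangle_\calH$ is a weak-$*$ continuous linear functional, identify it with a unique element of $\calB$ through $(\calB^\star,\sigma(\calB^\star,\calB))^\ast=\calB$, and then read off the adjoint identity and the bound $\|\Phi^\star h\|_\calB\le\|\Phi\|\,\|h\|_\calH$; your uniform-boundedness remark also justifies $\|\Phi\|<\infty$, which the paper takes for granted.
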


\begin{proof}
    Fix $h\in\calH$ and consider the linear functional $\ell_h: \calB^\star \to \mathbb R$ defined by
    \[
        \ell_h(b^\star)\;\coloneqq \; \bracket{h,\, \Phi b^\star}_\calH,\qquad b^\star\in\calB^\star.
    \]
    It is linear. Since $\Phi:(\calB^\star,\sigma(\calB^\star,\calB))\to\calH$ is weak-$*$-to-weak continuous by assumption, and $y\mapsto \bracket{h,y}_\calH$ is a weakly continuous linear functional on $\calH$, the composition $\ell_h$ is a \emph{weak-$*$ continuous} linear functional on $\calB^\star$. A standard result in functional analysis states that the dual of $(\calB^\star,\sigma(\calB^\star,\calB))$ is exactly the canonical identification $\calB\hookrightarrow\calB^{\star\star}$, hence every weak-$*$ continuous linear functional on $\calB^\star$ is of the form
    \[
        b^\star\;\longmapsto\; \bracket{b,\, b^\star}_{\calB,\calB^\star}\quad\text{for a unique } b\in\calB.
    \]
    Therefore there exists a unique $\Phi^\star h\in\calB$ such that, for all $b^\star\in\calB^\star$, $\bracket{h,\, \Phi b^\star}_\calH=\bracket{\Phi^\star h,\, b^\star}_{\calB,\calB^\star}$. This defines a linear operator $\Phi^\star:\calH\to\calB$. Moreover, we obtain the norm estimate
    \[
        \|\Phi^\star h\|_\calB\;=\;\sup_{\|b^\star\|\le 1}\big|\bracket{\Phi^\star h,\, b^\star}\big|\;=\;\sup_{\|b^\star\|\le 1}\big|\bracket{h,\, \Phi b^\star}_\calH\big|\;\le\; \|h\|_\calH\,\|\Phi\|,
    \]
    which shows that $\Phi^\star$ is bounded. Finally, set $\varphi_{b^\star}\coloneqq \Phi b^\star$ for $b^\star\in\calB^\star$. The previous identity can be written
    \[
        \bracket{\Phi^\star h,\, b^\star}_{\calB,\calB^\star}\;=\;\bracket{h,\, \varphi_{b^\star}}_\calH,\qquad \forall\, b^\star\in\calB^\star,
    \]
    which proves the claimed expression of $\Phi^\star$ via the canonical identification $\calB\hookrightarrow\calB^{\star\star}$.
\end{proof}

\begin{remark}[$\Phi^\star$ in the measure case]\label{rem:phi_star_via_diracs}
$\Phi^\star h$ is an element of the pre-dual $\calB$ of Lemma~\ref{lem:dual_Phi_banach}. When $\calB^\star=\calM(\calX)$, $\calM(\calX)$ is generated (in the weak-$*$ topology) by Dirac measures, so $\Phi^\star h$ is fully specified by its values on Dirac inputs: $(\Phi^\star h)(t)=\langle\varphi_t,h\rangle_\calH$ where $\varphi_t\coloneqq\Phi\delta_t$.
\end{remark}

\begin{lem}
    \label{lem:dual_Phi}
    Let $\Phi\,:\,\mathcal{M}(\mathcal{X})\to\mathcal{H}$ be a linear weak-$*$-to-weak continuous operator, with $\calH$ a separable Hilbert space. Then its pre-adjoint $\Phi^\star\,:\,\mathcal{H}\to\mathcal{C}_0(\mathcal{X})$ reads
    \[
    \Phi^\star\,:\,h\in\mathcal{H}\mapsto \big(t\in\mathcal{X}\mapsto\langle\varphi_t,h\rangle_{\mathcal H}\big)\in\mathcal{C}_0(\mathcal{X})\,,
    \]
    where we identified the pre-dual space $\mathcal{C}_0(\mathcal{X})$ with a subspace of the dual $\mathcal{M}(\mathcal{X})^\star$, and where $\varphi_t\coloneqq \Phi\, \delta_t$ with~$\delta_t$ the Dirac measure at $t\in\mathcal{X}$. Moreover, for every $\mu\in\calM(\calX)$, one has the Bochner integral representation in $\calH$:
    \[
        \Phi\mu \,=\, \int_{\calX} \varphi_t\,\mathrm d\mu(t).
    \]
\end{lem}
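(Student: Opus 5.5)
The plan is to specialize Lemma~\ref{lem:dual_Phi_banach} to $\calB=\calC_0(\calX)$, $\calB^\star=\calM(\calX)$, and then establish the Bochner representation separately. By Lemma~\ref{lem:dual_Phi_banach}, $\Phi^\star h\in\calC_0(\calX)$ is the unique element with $\langle \Phi^\star h,\mu\rangle_{\calC,\calM}=\langle h,\Phi\mu\rangle_\calH$ for all $\mu\in\calM(\calX)$. Testing against $\mu=\delta_t$ gives $(\Phi^\star h)(t)=\int \Phi^\star h\,\mathrm d\delta_t=\langle h,\Phi\delta_t\rangle_\calH=\langle\varphi_t,h\rangle_\calH$. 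This yields the pointwise formula, and continuity together with vanishing at infinity are inherited from $\Phi^\star h\in\calC_0(\calX)$.

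For the integral representation, I first check that $t\mapsto\varphi_t$ is Bochner integrable with respect to $|\mu|$. Weak measurability follows because $t\mapsto\langle\varphi_t,h\rangle=(\Phi^\star h)(t)$ is continuous for every $h$. Since $\calH$ is separable, Pettis' measurability theorem then gives strong measurability. The norm is bounded: $\|\varphi_t\|_\calH\le\|\Phi\|\,\|\delta_t\|_{\TV}=\|\Phi\|$, where $\Phi$ is norm-bounded because weak-$*$-to-weak continuity implies norm-to-norm boundedness via Lemma~\ref{lem:dual_Phi_banach}'s estimate ($\|\Phi\|=\|\Phi^\star\|$). Hence $\int\|\varphi_t\|\,\mathrm d|\mu|\le\|\Phi\|\,\|\mu\|_{\TV}<\infty$, so $I_\mu\coloneqq\int\varphi_t\,\mathrm d\mu(t)$ is well defined in $\calH$, using the Jordan decomposition or the polar density $\mathrm d\mu=\sigma\,\mathrm d|\mu|$.

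Finally, I identify $I_\mu$ with $\Phi\mu$ by pairing against an arbitrary $h\in\calH$. Bochner integrals commute with bounded linear functionals, so $\langle h,I_\mu\rangle=\int\langle h,\varphi_t\rangle\,\mathrm d\mu(t)=\int(\Phi^\star h)\,\mathrm d\mu=\langle\Phi^\star h,\mu\rangle_{\calC,\calM}=\langle h,\Phi\mu\rangle_\calH$. Since $h$ is arbitrary, $I_\mu=\Phi\mu$.

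The main obstacle is the measurability and integrability step, namely upgrading weak to strong measurability. This is exactly where the separability of $\calH$ is used, which explains its appearance in the hypotheses. The rest consists of routine applications of the adjoint identity.
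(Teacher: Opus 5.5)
Your proposal is correct and follows the paper's proof essentially step for step: you specialize Lemma~\ref{lem:dual_Phi_banach}, test against $\delta_t$, get weak continuity of $t\mapsto\varphi_t$ from $\Phi^\star h\in\calC_0(\calX)$, upgrade it to strong measurability by Pettis using separability, bound $\|\varphi_t\|_\calH\le\|\Phi\|$, and identify the Bochner integral with $\Phi\mu$ by pairing against $h$. One small caveat is that invoking the estimate of Lemma~\ref{lem:dual_Phi_banach} to justify $\|\Phi\|<\infty$ is circular, since that estimate already presupposes it; the fix is the uniform boundedness principle, because $\{\Phi\mu:\|\mu\|_{\TV}\le1\}$ is weakly bounded, with $\sup_{\|\mu\|_{\TV}\le1}|\langle h,\Phi\mu\rangle_\calH|=\|\Phi^\star h\|_\infty<\infty$ for each $h$ (the paper itself takes this boundedness for granted).
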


\begin{proof}
    Applying Lemma~\ref{lem:dual_Phi_banach} with $\calB=\calC_0(\calX)$, $\calB^\star=\calM(\calX)$ (Riesz--Markov, \citealp[Theorem~6.19]{rudin1987real}) gives a unique $\Phi^\star h\in\calC_0(\calX)$ such that $\bracket{h,\Phi\mu}_\calH = \bracket{\Phi^\star h,\mu}_{\calC_0(\calX),\calM(\calX)}$ for every $\mu\in\calM(\calX)$, with $\|\Phi^\star h\|_\infty\le\|h\|_\calH\,\|\Phi\|$. Setting $\varphi_t\coloneqq\Phi\delta_t\in\calH$ and evaluating the identity at $\mu=\delta_t$ yields $(\Phi^\star h)(t)=\bracket{\varphi_t,h}_\calH$, so $\Phi^\star h$ is precisely the continuous function $t\mapsto\bracket{\varphi_t,h}_\calH$.

    We now prove the integral representation. First, note that 
    \[
    \|\varphi_t\|_\calH=\|\Phi\delta_t\|_\calH\le \|\Phi\|\,\|\delta_t\|_{\TV}=\|\Phi\|
    \]
    for all $t\in\calX$, so $t\mapsto\varphi_t\in\calH$ is bounded. Next, for each fixed $h\in\calH$, we already identified $\Phi^\star h\in\calC_0(\calX)$ and established $(\Phi^\star h)(t)=\bracket{\varphi_t,h}_\calH$. Since $\Phi^\star h\in\calC_0(\calX)$, the scalar map $t\mapsto(\Phi^\star h)(t)=\bracket{\varphi_t,h}_\calH$ is continuous on~$\calX$. Hence $t\mapsto\varphi_t$ is weakly continuous (i.e., all scalar evaluations $\bracket{\varphi_t,h}_\calH$ are continuous in $t$). Because $\calH$ is separable, weak measurability/continuity implies strong (Bochner) measurability by Pettis' theorem, see~\citep[Theorem~1.1.6]{hytonen2016analysis}. Consequently, since $\int_{\calX}\|\varphi_t\|_\calH\,\mathrm d|\mu|(t)\le\|\Phi\|\,\|\mu\|_{\TV}<\infty$, for any finite signed measure $\mu\in\calM(\calX)$ the Bochner integral $\int_{\calX}\varphi_t\,\mathrm d\mu(t)$ is well-defined in $\calH$ and satisfies, for all $h\in\calH$,
    \[
            \Big\langle h, \int_{\calX}\varphi_t\,\mathrm d\mu(t) \Big\rangle_\calH \,=\, \int_{\calX} \langle h,\varphi_t \rangle_\calH\,\mathrm d\mu(t).
    \]
    Using the identity $\langle h,\Phi\mu \rangle_\calH=\langle \Phi^\star h,\mu \rangle_{\calC_0(\calX),\calM(\calX)}=\int_{\calX} (\Phi^\star h)(t)\,\mathrm d\mu(t)=\int_{\calX}\langle h,\varphi_t \rangle_\calH\,\mathrm d\mu(t)$, we deduce that 
    \[
            \langle h,\Phi\mu \rangle_\calH=\big\langle h, \int_{\calX}\varphi_t\,\mathrm d\mu(t) \big\rangle_\calH
    \] 
    for all $h\in\calH$. By uniqueness of the Riesz representation in $\calH$, this implies
    \[
            \Phi\mu \,=\, \int_{\calX} \varphi_t\,\mathrm d\mu(t),
    \]
    which is the desired representation.
\end{proof}

\subsection{On the Fenchel--Young losses}
\begin{lem}[Properties of Fenchel--Young losses on Banach spaces; cf.\ \citealp{blondel2019learning}]
\label{lem:fy_properties}
    Let $\calB$ be a Banach space and $\calB^\star$ its dual space. Let $G: \calB^\star \to \mathbb{R} \cup \{+\infty\}$ be a proper convex function on the Banach space $\calB^\star$ that is \emph{weak-$*$ lower semi-continuous} $(\sigma(\calB^\star,\calB)$-l.s.c., equivalently $G=G^{\star\star}$ with $G^\star$ taken on the pre-dual $\calB)$. Let $G^\star$ be the Legendre--Fenchel conjugate of $G$ given only on the pre-dual space $\calB$, identified with a weak-$*$ dense subset of the bidual $\calB^{\star\star}$. For any $u \in \calB^\star$ and $v \in \calB$, the following properties hold:
\begin{enumerate}
    \item \textbf{Non-negativity:} $L_G(u \parallel v) \coloneqq  G(u) + G^\star(v) - \langle v, u \rangle_{\calB,\calB^\star} \ge 0$.
    \item \textbf{Zero-loss equivalence:} The following statements are unconditionally equivalent for any $(u, v) \in \calB^\star \times \calB$:
    \[
    \text{$(i)$ }L_G(u \parallel v) = 0
    \qquad
    \text{$(ii)$ }v \in \partial G(u)
    \qquad
    \text{$(iii)$ }u \in \partial G^\star(v)
    \]
    \item \textbf{Differentiable case:} If $G$ is Gâteaux differentiable at $u$ with $\nabla G(u)\in\calB$, then $L_G(u \parallel v) = 0 \iff v = \nabla G(u)$.
\end{enumerate}
\end{lem}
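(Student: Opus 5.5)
The plan is to argue directly from the definition of $G^\star$ on the pre-dual $\calB$ and from the Fenchel--Moreau identity $G=G^{\star\star}$, the latter being exactly what weak-$*$ lower semi-continuity provides for the dual pair $(\calB,\calB^\star)$. No reflexivity is needed, because every conjugate is taken with respect to the pairing $\langle\cdot,\cdot\rangle_{\calB,\calB^\star}$. I will treat the three items in order.

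\emph{Non-negativity.} By definition $G^\star(v)=\sup_{u'\in\calB^\star}\{\langle v,u'\rangle-G(u')\}\ge\langle v,u\rangle-G(u)$ for every $u\in\dom G$. Rearranging gives $L_G(u\parallel v)\ge0$. If $G(u)=+\infty$, I will note that $G^\star(v)>-\infty$ because $G$ is proper, so the loss equals $+\infty$; this also shows the sum is never of the form $\infty-\infty$.

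\emph{Zero-loss equivalence.} For $(i)\Leftrightarrow(ii)$ I only unfold the definitions. The equality $L_G(u\parallel v)=0$ holds iff $u$ attains the supremum defining $G^\star(v)$, that is, iff $\langle v,u'\rangle-G(u')\le\langle v,u\rangle-G(u)$ for all $u'$. This is the subgradient inequality $G(u')\ge G(u)+\langle v,u'-u\rangle$, i.e.\ $v\in\partial G(u)$ with subgradients restricted to $\calB$, as in Definition~\ref{def:source_cnd}. For $(i)\Leftrightarrow(iii)$ I run the same argument on $G^\star:\calB\to\R\cup\{+\infty\}$, whose conjugate over $\calB^\star$ is $G^{\star\star}$. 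The condition $u\in\partial G^\star(v)$ then means $G^{\star\star}(u)+G^\star(v)=\langle v,u\rangle$. The one substantive step, and the main point to justify, is replacing $G^{\star\star}(u)$ by $G(u)$. I will do this with the Fenchel--Moreau theorem in the locally convex space $(\calB^\star,\sigma(\calB^\star,\calB))$, whose topological dual is $\calB$ (the same fact used in Lemma~\ref{lem:dual_Phi_banach}). Once that substitution is made, the condition is literally $L_G(u\parallel v)=0$. If $G^\star$ is not proper, then $G$ itself is not proper, so this degenerate case does not arise.

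\emph{Differentiable case.} Suppose $G$ is Gâteaux differentiable at $u$ with $\nabla G(u)\in\calB$. For convex $G$ I will show $\partial G(u)\cap\calB=\{\nabla G(u)\}$. The inclusion $\supseteq$ follows from convexity: difference quotients are monotone along rays, which yields the subgradient inequality. For $\subseteq$, take $v\in\partial G(u)$ and any direction $d$. The subgradient inequality gives $\langle v,d\rangle\le\frac{G(u+td)-G(u)}{t}\to\langle\nabla G(u),d\rangle$. Applying the same bound to $-d$ gives equality, so $v$ and $\nabla G(u)$ agree on all of $\calB^\star$ and hence coincide in $\calB$. Combining this with $(i)\Leftrightarrow(ii)$ gives the claim.
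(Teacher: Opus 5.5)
Your proposal is correct and follows the paper's proof essentially step by step. Non-negativity comes from the definition of $G^\star$ as a supremum. The equivalence $(i)\Leftrightarrow(ii)$ follows by unfolding the Fenchel--Young equality into the subgradient inequality, and $(i)\Leftrightarrow(iii)$ by running the same argument on $G^\star$ and invoking Fenchel--Moreau $G^{\star\star}=G$ on the pairing $(\calB,\calB^\star)$. You also make explicit points the paper leaves implicit: the $+\infty$ cases, the properness of $G^\star$, and the directional-derivative argument showing $\partial G(u)\cap\calB=\{\nabla G(u)\}$ in the differentiable case.
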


\begin{proof}
The non-negativity (1) is a direct consequence of the Fenchel--Young inequality, which states that $G^\star(v) \ge \langle u, v \rangle_{\calB^\star,\calB} - G(u)$ by definition of the supremum in the conjugate.

\medskip\noindent
For the equivalence (2), we proceed as follows:
\begin{itemize}
    \item $(ii) \implies (i)$: If $v \in \partial G(u)$, then by definition of the subgradient, $G(w) \ge G(u) + \langle v, w - u \rangle_{\calB,\calB^\star}$ for all~$w$. Rearranging implies $\langle v, u \rangle_{\calB,\calB^\star} - G(u) \ge \langle v, w \rangle_{\calB,\calB^\star} - G(w)$ for all $w$. Taking the supremum over $w$ on the RHS gives $\langle v, u \rangle_{\calB,\calB^\star} - G(u) \ge G^\star(v)$, or equivalently $G(u) + G^\star(v) - \langle v, u \rangle_{\calB,\calB^\star} \le 0$. Since the loss is always non-negative, it must be 0.
    \item $(i) \implies (ii)$: If $G(u) + G^\star(v) - \langle v, u \rangle_{\calB,\calB^\star} = 0$, then $\langle v, u \rangle_{\calB,\calB^\star} -  G(u) = G^\star(v)$. By the Fenchel--Young inequality we have $G^\star(v) \ge \langle v, w \rangle_{\calB,\calB^\star} - G(w)$ for any $w$. Combining both leads to the inequality $G(w)\geq G(u) + \langle v, w-u \rangle_{\calB,\calB^\star}$ for any $w$ which proves that $v\in \partial G(u)$.
\end{itemize}

The equivalence $(i)\Leftrightarrow(iii)$ follows by symmetry from the $(i)\Leftrightarrow(ii)$ argument applied to $G^\star$, using the Fenchel--Moreau identity $G^{\star\star}=G$ on the dual pairing $(\calB,\calB^\star)$, which holds by the weak-$*$ lower semi-continuity of $G$ (no reflexivity of $\calB$ needed).
    
For (3), if $G$ is differentiable, the subdifferential contains a single element, $\partial G(u) = \{\nabla G(u)\}$. Thus condition (ii) simplifies to $v = \nabla G(u)$.
See \citet[Proposition~1]{blondel2019learning} for further details on these convex analysis identities in the context of machine learning losses.
\end{proof}

\begin{lem}[Bregman divergence identity]
\label{lem:bregman_identity}
    Let $F: \calH \to \mathbb{R}$ be a strictly convex, continuously differentiable function. Let $F^\star: \calH \to \mathbb{R} \cup \{+\infty\}$ be its Fenchel conjugate, which is differentiable on the interior of its domain. For any $u \in \calH$ and any $h \in \operatorname{int}(\operatorname{dom}(F^\star))$, the following identity holds:
\begin{equation}
\notag
        F(u \mid \nabla F^\star(h)) = F^\star(h \mid \nabla F(u)) = L_F(u \parallel h).
\end{equation}
Here $\nabla F^\star(\nabla F(u))$ is understood as $u$, the unique element of $\partial F^\star(\nabla F(u))$ $($a singleton by strict convexity of $F)$; with this reading the middle term is well defined even when $\nabla F(u)\notin\operatorname{int}(\dom F^\star)$.
\end{lem}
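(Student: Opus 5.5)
The plan is to prove both equalities by direct algebra, using the Fenchel--Young equality case. I will reduce each Bregman divergence to the Fenchel--Young loss $L_F(u\parallel h)=F(u)+F^\star(h)-\langle h,u\rangle_\calH$, which needs no differentiability.

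\textbf{First equality.} Fix $h\in\operatorname{int}(\dom F^\star)$ and set $g\coloneqq\nabla F^\star(h)$. I first justify that $g$ is well defined. $F^\star$ is proper, convex and l.s.c., and it is continuous on the interior of its domain, so $\partial F^\star(h)\neq\varnothing$. By Lemma~\ref{lem:fy_properties}(2), applied in the Hilbert setting, any $g\in\partial F^\star(h)$ satisfies $h\in\partial F(g)=\{\nabla F(g)\}$. Strict convexity of $F$ then makes $g$ unique, since two maximizers of $\langle h,\cdot\rangle-F$ cannot coexist. I will take this uniqueness as the meaning of $\nabla F^\star(h)$, consistently with the discussion preceding~\eqref{eq:FY_equals_Bregman}.

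From $h=\nabla F(g)$, the zero-loss equivalence gives $F(g)+F^\star(h)=\langle h,g\rangle_\calH$. Substituting this into
\[
F(u\mid g)=F(u)-F(g)-\langle h,u-g\rangle_\calH
\]
gives $F(u)+F^\star(h)-\langle h,u\rangle_\calH=L_F(u\parallel h)$.

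\textbf{Second equality.} Set $p\coloneqq\nabla F(u)$. Lemma~\ref{lem:fy_properties} gives $u\in\partial F^\star(p)$ and $F(u)+F^\star(p)=\langle p,u\rangle_\calH$. Strict convexity of $F$ makes $\partial F^\star(p)=\{u\}$, which is the stated reading of $\nabla F^\star(p)$ even when $p$ is not interior. Then
\[
F^\star(h\mid p)=F^\star(h)-F^\star(p)-\langle u,h-p\rangle_\calH=F^\star(h)+F(u)-\langle u,h\rangle_\calH=L_F(u\parallel h).
\]

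\textbf{Main obstacle.} The only delicate point is the infinite-dimensional justification that $\partial F^\star(h)$ is a nonempty singleton at interior points, and that the gradient notation is consistent with it. Nonemptiness follows from continuity of a convex l.s.c.\ function on the interior of its domain in a Hilbert (Banach) space. Uniqueness follows from strict convexity of $F$ via the inversion $\partial F^\star=(\partial F)^{-1}$, as argued above. Once this is settled, everything else is a rearrangement of the Fenchel--Young equality.
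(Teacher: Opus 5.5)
Your proposal is correct and follows essentially the same route as the paper. In both equalities you substitute the Fenchel equality at the aligned pair into the Bregman divergence and read off $L_F(u\parallel h)$: for the first one this is $F(g)+F^\star(h)=\langle h,g\rangle_\calH$ with $g=\nabla F^\star(h)$, and for the second $F^\star(p)+F(u)=\langle p,u\rangle_\calH$ with $p=\nabla F(u)$. The only difference is that you justify explicitly that $\partial F^\star(h)$ is a nonempty singleton with $\nabla F(\nabla F^\star(h))=h$. You do this via continuity of $F^\star$ on $\operatorname{int}(\dom F^\star)$, the zero-loss equivalence of Lemma~\ref{lem:fy_properties}, and strict convexity, whereas the paper simply invokes this inverse relation between the gradient maps.
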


\begin{proof}

\textbf{1. Proving $F(u \mid \nabla F^{\star}(h)) = L_F(u \parallel h)$:}
Let $y = \nabla F^{\star}(h)$. By the properties of Legendre--Fenchel conjugates for strictly convex functions, the gradient maps are inverses: $\nabla F(y) = \nabla F(\nabla F^{\star}(h)) = h$.
Substituting $y = \nabla F^{\star}(h)$ and $\nabla F(y) = h$ into the Bregman definition:
\begin{align*}
    F(u \mid \nabla F^{\star}(h)) &= F(u) - F(\nabla F^{\star}(h)) - \langle h, u - \nabla F^{\star}(h) \rangle \\
    &= F(u) - F(\nabla F^{\star}(h)) - \langle h, u \rangle + \langle h, \nabla F^{\star}(h) \rangle.
\end{align*}
Recall the Fenchel equality for the pair $(h, \nabla F^{\star}(h))$: since $\nabla F^{\star}(h)$ is the subgradient at $h$, we have $F^{\star}(h) + F(\nabla F^{\star}(h)) = \langle h, \nabla F^{\star}(h) \rangle$, which implies $F(\nabla F^{\star}(h)) = \langle h, \nabla F^{\star}(h) \rangle - F^{\star}(h)$.
Substituting:%
\begin{align*}
    F(u \mid \nabla F^{\star}(h)) &= F(u) - (\langle h, \nabla F^{\star}(h) \rangle - F^{\star}(h)) - \langle h, u \rangle + \langle h, \nabla F^{\star}(h) \rangle \\
    &= F(u) + F^{\star}(h) - \langle u, h \rangle = L_F(u \parallel h).
\end{align*}

\textbf{2. Proving $F^{\star}(h \mid \nabla F(u)) = L_F(u \parallel h)$.} With the stated convention, $\nabla F^\star(\nabla F(u))=u$: indeed $u\in\partial F^\star(\nabla F(u))$ by the Fenchel equality at the pair $(u,\nabla F(u))$, and strict convexity of $F$ makes this subgradient unique. By definition,
\[
    F^\star(h \mid \nabla F(u)) = F^\star(h) - F^\star(\nabla F(u)) - \langle u,\, h - \nabla F(u) \rangle.
\]
The Fenchel equality at $(u,\nabla F(u))$ gives $F^\star(\nabla F(u)) = \langle u, \nabla F(u) \rangle - F(u)$. Substituting yields
\[
    F^\star(h \mid \nabla F(u)) = F^\star(h) + F(u) - \langle u, h \rangle = L_F(u \parallel h).
\]
Combining the two equalities yields the claimed identity.
\end{proof}

\subsection{On the pulled-back regularizer}
\label{subsec:pullback_app}

\begin{lem}[Regularity of the pulled-back regularizer]\label{lem:pullback_regularity}
Let \Cref{ass:BR_regularizer} hold, and let $R^\star$ be the Legendre--Fenchel conjugate of $R$ taken
on the pre-dual pairing $(\calB,\calB^\star)$,
\[
    R^\star(f)=\sup_{w\in\calB^\star}\big\{\langle f,w\rangle_{\calB,\calB^\star}-R(w)\big\},\qquad f\in\calB\,.
\]
Let $u$ be the pulled-back regularizer of~\eqref{eq:BR_pullback}. Then the following hold.
\begin{enumerate}
    \item[$(a)$] $R^\star:\calB\to\R\cup\{+\infty\}$ is proper, convex and norm lower semi-continuous, and
    $R^{\star\star}=R$. In particular $R^\star>-\infty$ everywhere on $\calB$ and $u>-\infty$ everywhere on $\calH$.
    \item[$(b)$] $u$ is convex on $\calH$.
    \item[$(c)$] $u$ is lower semi-continuous on $\calH$.
    \item[$(d)$] The effective domain is
    \[
        \dom u=\Big\{h\in\calH:-\frac{\Phi^\star h}{\lambda}\in\dom R^\star\Big\}
        =(\Phi^\star)^{-1}\big(-\lambda\,\dom R^\star\big),
    \]
    and $u$ is proper if and only if $\dom u\neq\varnothing$; under \Cref{ass:BR_regularizer} the map
    $u$ is therefore proper.
\end{enumerate}
\end{lem}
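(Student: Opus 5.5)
The plan is to prove the four items in order, deriving everything about $u$ from the structure $u=\lambda R^\star\circ A$ with the bounded linear map $A\coloneqq-\Phi^\star/\lambda:\calH\to\calB$. Boundedness of $A$ follows from the norm estimate $\|\Phi^\star h\|_\calB\le\|\Phi\|\,\|h\|_\calH$ in the proof of Lemma~\ref{lem:dual_Phi_banach}.

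For $(a)$, the conjugate $R^\star(f)=\sup_{w\in\dom R}\{\langle f,w\rangle-R(w)\}$ is a pointwise supremum of affine functions $f\mapsto\langle f,w\rangle-R(w)$. Each of these is norm continuous on $\calB$, because $w\in\calB^\star$ acts as a bounded functional. Hence $R^\star$ is convex and norm l.s.c. Since $R$ is proper, picking any $w_0\in\dom R$ gives $R^\star(f)\ge\langle f,w_0\rangle-R(w_0)>-\infty$ everywhere.

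Properness of $R^\star$ (some finite value) and $R^{\star\star}=R$ both come from the Fenchel--Moreau theorem on the dual pair $(\calB,\calB^\star)$, which I will invoke as in Lemma~\ref{lem:fy_properties}. Concretely, $R$ is proper, convex and $\sigma(\calB^\star,\calB)$-l.s.c. Its epigraph is therefore weak-$*$ closed, and Hahn--Banach separation in the locally convex space $(\calB^\star,\sigma(\calB^\star,\calB))$, whose dual is $\calB$, yields a weak-$*$ continuous affine minorant. This shows $\dom R^\star\neq\varnothing$ and gives $R^{\star\star}=R$. Finally, $u>-\infty$ everywhere follows at once, since $u(h)=\lambda R^\star(Ah)$.

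For $(b)$ and $(c)$, I will use the composition structure. Because $A$ is linear and $R^\star$ is convex, $R^\star\circ A$ is convex, and multiplying by $\lambda>0$ preserves convexity. For lower semicontinuity, $A$ is norm-to-norm continuous and $R^\star$ is norm l.s.c. by $(a)$. So if $h_n\to h$ in $\calH$, then $Ah_n\to Ah$ in $\calB$, and $\liminf u(h_n)\ge\lambda R^\star(Ah)=u(h)$. Equivalently, each sublevel set $\{u\le t\}=A^{-1}(\{R^\star\le t/\lambda\})$ is the preimage of a closed set under a continuous map.

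For $(d)$, the domain formula is immediate: $u(h)<+\infty$ iff $Ah\in\dom R^\star$, and $\lambda>0$ lets us rewrite this as $\Phi^\star h\in-\lambda\dom R^\star$. Properness of $u$ means $u\not\equiv+\infty$ and $u>-\infty$ everywhere. The second condition was shown in $(a)$, so $u$ is proper iff $\dom u\neq\varnothing$, and that is assumed in \Cref{ass:BR_regularizer}.

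The only delicate point is the non-emptiness of $\dom R^\star$ and the biconjugate identity without reflexivity. This rests on separation in the weak-$*$ topology, where the dual is exactly $\calB$ (as in the proof of Lemma~\ref{lem:dual_Phi_banach}), rather than in the norm topology. I will cite \citet{zalinescu2002convex} for the locally convex Fenchel--Moreau theorem. Everything else is routine.
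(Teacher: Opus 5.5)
Your proposal is correct and follows the paper's proof essentially step for step. You use the same factorization $u=\lambda\,R^\star\circ A$ with $A=-\Phi^\star/\lambda$, the same pointwise-supremum and $w_0\in\dom R$ arguments in $(a)$, Fenchel--Moreau on $(\calB^\star,\sigma(\calB^\star,\calB))$ for $\dom R^\star\neq\varnothing$ and $R^{\star\star}=R$, and the same domain and properness bookkeeping in $(d)$. The only cosmetic difference is in $(c)$: you argue via a sequential $\liminf$, which is valid since $\calH$ is metric, whereas the paper writes $\operatorname{epi}u$ as the preimage of $\operatorname{epi}R^\star$ under $(h,t)\mapsto(Ah,t/\lambda)$; both of you also give the sublevel-set version.
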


If $0\in\dom R^\star$ then $0\in\dom u$, so the feasibility requirement
    $\dom u\neq\varnothing$ holds automatically. This is the case for every norm regularizer
    $R=\|\cdot\|_{\calB^\star}$, whose conjugate on the pre-dual is $R^\star=\iota_{B_\calB}$, the indicator
    of the closed unit ball $B_\calB=\{f\in\calB:\|f\|_\calB\le1\}$ of the pre-dual, so that $0\in\dom R^\star$. For $R=\|\cdot\|_{\TV}$ on $\calB^\star=\calM(\calX)$,
    where $\calB=\calC_0(\calX)$, this reads $R^\star=\iota_{\{f\in\calC_0(\calX):\|f\|_\infty\le1\}}$
    (\Cref{sec:applications}). Consequently $u$ is a proper, convex and lower semi-continuous function on the Hilbert space
    $\calH$, which is exactly the regularity required to apply Carlier's constructive
    Br\o ndsted--Rockafellar theorem~\citep[Theorem~4.1]{carlier2023fenchel} in
    \Cref{thm:algorithmic_stability}.

    We do not use, and do not have,
coercivity of $u$. For a norm regularizer $u=\lambda\,\iota_{B_\calB}\circ A$ is the indicator of the set
$K_\lambda=\{h\in\calH:\|\Phi^\star h\|_\calB\le\lambda\}$, which contains $\operatorname{Ker}\Phi^\star$ and is unbounded
whenever $\Phi^\star$ is not bounded below, so $u$ does not tend to $+\infty$ as $\|h\|_\calH\to\infty$.
What the downstream argument needs is only that $u$ is proper, convex and lower semi-continuous,
equivalently that $u^{\star\star}=u$ and $u^\star$ is proper, which is what the value $u^\star(-\Phi\mu)$ in
\Cref{prop:BR_gap_inequality} uses.

\begin{proof}
Throughout, the pairing $\langle f,w\rangle_{\calB,\calB^\star}$ is between $f\in\calB$ and
$w\in\calB^\star$, and each $w\in\calB^\star$ is a bounded linear functional on $\calB$ with
$|\langle f,w\rangle_{\calB,\calB^\star}|\le\|f\|_\calB\,\|w\|_{\calB^\star}$. Write $A:\calH\to\calB$ for
the linear map:
\begin{equation*}
    \text{$Ah\coloneqq-\Phi^\star h/\lambda$, so that $u=\lambda\,(R^\star\circ A)$.}
\end{equation*}

\medskip\noindent\emph{Proof of $(a)$.}
For each $w\in\dom R$ the map $f\mapsto\langle f,w\rangle_{\calB,\calB^\star}-R(w)$ is affine and norm
continuous on~$\calB$, since $f\mapsto\langle f,w\rangle_{\calB,\calB^\star}$ is a bounded linear functional
and $-R(w)$ is a finite constant. By definition $R^\star$ is the pointwise supremum of this family.
A pointwise supremum of continuous affine functionals is convex and norm lower
semi-continuous, so $R^\star$ is convex and norm lower semi-continuous on $\calB$, and its epigraph
$\operatorname{epi}R^\star=\bigcap_{w\in\dom R}\operatorname{epi}[\langle\cdot,w\rangle_{\calB,\calB^\star}-R(w)]$
is a closed convex subset of $\calB\times\R$.
Now, fix $w_0\in\dom R$. Then for every $f\in\calB$,
\[
    R^\star(f)\;\ge\;\langle f,w_0\rangle_{\calB,\calB^\star}-R(w_0)\;>\;-\infty,
\]
so $R^\star>-\infty$ everywhere. It remains to show
$R^\star\not\equiv+\infty$, and this is the only step that uses the weak-$*$ lower semi-continuity of $R$,
rather than only its convexity. The space $(\calB^\star,\sigma(\calB^\star,\calB))$ is a locally convex
Hausdorff space whose topological dual is exactly $\calB$, so the conjugation on the pairing
$(\calB,\calB^\star)$ and the biconjugation on $\calB^\star$ are the ones associated with this duality. As
$R$ is proper, convex and $\sigma(\calB^\star,\calB)$-lower semi-continuous, the Fenchel--Moreau theorem
\citep[Theorem~2.3.3]{zalinescu2002convex} gives $R^{\star\star}=R$.
Hence
$\dom R^\star\neq\varnothing$, so $R^\star$ admits a $\sigma(\calB^\star,\calB)$-continuous affine minorant
indexed by $\calB$ and is finite somewhere. Together with $R^\star>-\infty$, this makes $R^\star$ proper.
Since $R^\star>-\infty$ and $\lambda>0$, the composition $u=\lambda\,R^\star\circ A$ satisfies
$u>-\infty$ everywhere. This proves (a).

Furthermore, by Lemma~\ref{lem:dual_Phi_banach},  $\Phi^\star:\calH\to\calB$ is bounded linear, with
$\|\Phi^\star h\|_\calB\le\|\Phi\|\,\|h\|_\calH$. Since $\lambda>0$, the map $A=-\Phi^\star/\lambda$ is
linear and norm-to-norm continuous from $\calH$ to $\calB$, with
$\|Ah\|_\calB\le(\|\Phi\|/\lambda)\,\|h\|_\calH$. 

\medskip\noindent\emph{Proof of $(b)$.}
Convexity of $u$ is the standard fact that a convex function composed with an affine map is convex. 

\medskip\noindent\emph{Proof of $(c)$.}
A function is lower semi-continuous if and only if its epigraph is closed. Define the linear map
\[
    \Lambda:\calH\times\R\to\calB\times\R,\qquad \Lambda(h,t)\coloneqq\big(Ah,\;t/\lambda\big),
\]
which is continuous, being the product of the continuous map $A$ from above and the continuous scaling
$t\mapsto t/\lambda$. Since $\lambda>0$, for $(h,t)\in\calH\times\R$ we have
\[
    (h,t)\in\operatorname{epi}u
    \;\Longleftrightarrow\; \lambda\,R^\star(Ah)\le t
    \;\Longleftrightarrow\; R^\star(Ah)\le t/\lambda
    \;\Longleftrightarrow\; \big(Ah,\,t/\lambda\big)\in\operatorname{epi}R^\star,
\]
so $\operatorname{epi}u=\Lambda^{-1}(\operatorname{epi}R^\star)$. From $(a)$, the set
$\operatorname{epi}R^\star$ is closed in $\calB\times\R$, and $\Lambda$ is continuous, hence
$\operatorname{epi}u$ is closed in $\calH\times\R$ and $u$ is lower semi-continuous. Equivalently, each
sublevel set $\{u\le\alpha\}=A^{-1}(\{R^\star\le\alpha/\lambda\})$ is the preimage of a closed convex set
under the continuous $A$, hence closed. The only inputs specific to our setting are the norm continuity of
$A$ and the closedness of $\operatorname{epi}R^\star$ above.

\medskip\noindent\emph{Proof of $(d)$.}
Since $R^\star>-\infty$ and $\lambda>0$, the value $u(h)=\lambda\,R^\star(Ah)$ is finite if and only if
$R^\star(Ah)<+\infty$, that is if and only if $Ah=-\Phi^\star h/\lambda\in\dom R^\star$. Hence
\[
    \dom u=\big\{h\in\calH:-\tfrac{\Phi^\star h}{\lambda}\in\dom R^\star\big\}
    =A^{-1}(\dom R^\star)=(\Phi^\star)^{-1}\big(-\lambda\,\dom R^\star\big),
\]
matching~\eqref{eq:BR_pullback}. As $u$ never takes the value $-\infty$ (by $(a)$), $u$ is proper if and
only if it is finite somewhere, that is if and only if $\dom u\neq\varnothing$. Under
\Cref{ass:BR_regularizer} we assume $\dom u\neq\varnothing$, so $u$ is proper.

\end{proof}

\begin{lem}[Dual certificates in the range and its closure]\label{lem:certificate_range_closure}
Let \Cref{ass:BR_regularizer} hold, fix $\mu\in\dom R$, and~write
\[
    s(\mu)\coloneqq\sup_{f\in\ran\Phi^\star}\big[\langle f,\mu\rangle_{\calB,\calB^\star}-R^\star(f)\big],
\]
so that, by \Cref{prop:BR_gap_inequality}, equality holds in~\eqref{eq:BR_gap_inequality} if and only if
$s(\mu)=R(\mu)$. Moreover:
\begin{enumerate}
    \item[$(i)$] If $\mu$ admits a dual certificate in the range, i.e.\ some
    $f\in\ran\Phi^\star$ with $f\in\partial R(\mu)$, then $s(\mu)=R(\mu)$ and equality holds
    in~\eqref{eq:BR_gap_inequality}.
\end{enumerate}
Assume in addition that $R=\|\cdot\|_{\calB^\star}$ is a norm regularizer, so that $R^\star=\iota_{B_\calB}$ is
the indicator of the closed unit ball $B_\calB=\{f\in\calB:\|f\|_\calB\le1\}$ of the pre-dual
(Lemma~\ref{lem:pullback_regularity}(e)) and
$s(\mu)=\sup_{f\in\ran\Phi^\star,\,\|f\|_\calB\le1}\langle f,\mu\rangle_{\calB,\calB^\star}$; the
canonical instance being $R=\|\cdot\|_{\TV}$ on $\calB^\star=\calM(\calX)$, with
$B_\calB=\{f\in\calC_0(\calX):\|f\|_\infty\le1\}$. Then:
\begin{enumerate}
    \item[$(ii)$] If $\mu$ admits a certificate only in the closure, that is
    $f^\star\in\overline{\ran\Phi^\star}\cap\partial R(\mu)$ with the closure taken in
    $\|\cdot\|_\calB$, then $s(\mu)=R(\mu)$ by rescaling, so equality holds in~\eqref{eq:BR_gap_inequality}.
    For $R=\|\cdot\|_{\TV}$ this hypothesis is an $f^\star$ in the sup-norm closure of
    $\ran\Phi^\star$ with $\|f^\star\|_\infty\le1$ and
    $\langle f^\star,\mu\rangle_{\calB,\calB^\star}=\|\mu\|_{\TV}$.
    \item[$(iii)$] If $\ran\Phi^\star$ is dense in $\calB$,  
    then $s(\mu)=R(\mu)$ for
    every $\mu\in\dom R$, so equality holds in~\eqref{eq:BR_gap_inequality} throughout~$\dom R$.
\end{enumerate}
\end{lem}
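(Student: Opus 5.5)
The plan starts with the general bound $s(\mu)\le R(\mu)$, taken from the proof of Proposition~\ref{prop:BR_gap_inequality}: the supremum over the subspace $\ran\Phi^\star$ is at most the supremum over all of $\calB$, which equals $R^{\star\star}(\mu)=R(\mu)$. Each of $(i)$--$(iii)$ therefore only requires the reverse inequality $s(\mu)\ge R(\mu)$. We obtain it by exhibiting elements of $\ran\Phi^\star$ whose objective value reaches or approaches $R(\mu)$.

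For $(i)$, take $f\in\ran\Phi^\star\cap\partial R(\mu)$. The zero-loss equivalence of Lemma~\ref{lem:fy_properties} gives $R^\star(f)=\langle f,\mu\rangle-R(\mu)$, so this single admissible $f$ already gives $s(\mu)\ge\langle f,\mu\rangle-R^\star(f)=R(\mu)$.

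For $(ii)$, with $R$ a norm, $s(\mu)=\sup\{\langle f,\mu\rangle: f\in\ran\Phi^\star,\ \|f\|_\calB\le1\}$. Take $f^\star\in\overline{\ran\Phi^\star}\cap\partial R(\mu)$, so that $\|f^\star\|_\calB\le1$ and $\langle f^\star,\mu\rangle=\|\mu\|_{\calB^\star}$. Choose $f_n\in\ran\Phi^\star$ with $f_n\to f^\star$ in $\|\cdot\|_\calB$. These approximants may leave the unit ball, so rescale them: set $g_n\coloneqq f_n/\max(1,\|f_n\|_\calB)$. This $g_n$ is still in the subspace $\ran\Phi^\star$, satisfies $\|g_n\|_\calB\le1$, and $\max(1,\|f_n\|)\to\max(1,\|f^\star\|)=1$. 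Hence $g_n\to f^\star$ in norm. By the bound $|\langle g_n-f^\star,\mu\rangle|\le\|g_n-f^\star\|_\calB\|\mu\|_{\calB^\star}$ we get $\langle g_n,\mu\rangle\to\|\mu\|_{\calB^\star}$, so $s(\mu)\ge R(\mu)$.

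For $(iii)$, the certificate may not exist, so we do not route through $(ii)$. Instead, use $R(\mu)=\|\mu\|_{\calB^\star}=\sup_{\|f\|_\calB\le1}\langle f,\mu\rangle$, the definition of the dual norm. Given $\varepsilon>0$, pick $f$ in the unit ball with $\langle f,\mu\rangle\ge\|\mu\|-\varepsilon$. By density there are $f_n\in\ran\Phi^\star$ converging to $f$, and the same rescaling gives $g_n$ in the unit ball of $\ran\Phi^\star$ with $g_n\to f$. Passing to the limit and then letting $\varepsilon\to0$ yields $s(\mu)\ge R(\mu)$.

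The main point needing care is the rescaling step: plain norm approximation does not preserve the constraint $\|f\|_\calB\le1$ encoded in $R^\star=\iota_{B_\calB}$. Positive homogeneity of the norm regularizer together with linearity of $\ran\Phi^\star$ is what repairs this. This is also why $(ii)$ and $(iii)$ are stated only for norm regularizers; for a general $R$, $R^\star$ restricted to the range need not be continuous enough.
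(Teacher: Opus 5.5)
Your proof is correct and follows the paper's argument essentially step for step: the universal bound $s(\mu)\le R(\mu)$ from $R^{\star\star}=R$, the Fenchel--Young equality for $(i)$, and, for $(ii)$ and $(iii)$, norm approximation from the range followed by rescaling into the unit ball, which uses that the range is a linear subspace. The only difference is cosmetic: you normalize by $\max(1,\|f_n\|_{\mathcal{B}})$ and pass to the limit, whereas the paper divides by the fixed factor $1+\epsilon$ and obtains the explicit bound $s(\mu)\ge\frac{1-\epsilon}{1+\epsilon}R(\mu)$.
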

\begin{remark}A certificate in $\ran\Phi^\star$, or even in $\calB$, need not exist, so the converse of~$(i)$ is false. For a norm regularizer the subdifferential chain rule $\partial u(\bar h)=-\Phi\,\partial R^\star(\bar\eta)$ nonetheless holds with equality (Proposition~\ref{prop:BR_primal_lift}); the failure here is of certificate existence in $\ran\Phi^\star$, not of that equality.

To see that a certificate need not exist, take $\calX=[0,1]$, $\calB=\calC(\calX)$ and $R=\|\cdot\|_{\TV}$,
and choose two disjoint sequences $\{a_k\}_{k\ge1}$, $\{b_k\}_{k\ge1}$ in $[0,1]$, each dense in $[0,1]$. Now, the finite signed measure $\mu\coloneqq\sum_{k\ge1}2^{-k}(\delta_{a_k}-\delta_{b_k})$ has
$\|\mu\|_{\TV}=\sum_{k\ge1}2^{-k}\cdot2=2$. Any $f\in\partial R(\mu)$ has $\|f\|_\infty\le1$ and
$\langle f,\mu\rangle_{\calB,\calB^\star}=\sum_{k\ge1}2^{-k}\big(f(a_k)-f(b_k)\big)=2$; since each summand is
at most $2^{-k}\cdot2$, this forces $f(a_k)=1$ and $f(b_k)=-1$ for every $k$. Continuity of $f$ with $f=1$ on
the dense set $\{a_k\}$ gives $f\equiv1$, while $f=-1$ on the dense set $\{b_k\}$ gives $f\equiv-1$, a
contradiction. Hence $\partial R(\mu)\cap\calB=\varnothing$: no certificate exists in $\calB$, and a
fortiori none in $\ran\Phi^\star$, even though $s(\mu)=\|\mu\|_{\TV}$ as soon as $\ran\Phi^\star$ is dense in $\calC_0(\calX)$, by part~$(iii)$.
\end{remark}
\begin{proof}[Proof of Lemma~\ref{lem:certificate_range_closure}]
For any $f\in\calB$ the Fenchel--Young inequality in the pairing $(\calB,\calB^\star)$ reads
$R(\mu)+R^\star(f)\ge\langle f,\mu\rangle_{\calB,\calB^\star}$, with equality if and only if
$f\in\partial R(\mu)$; equivalently
\begin{equation}\label{eq:certificate_FY_equality}
    f\in\partial R(\mu)\iff \langle f,\mu\rangle_{\calB,\calB^\star}-R^\star(f)=R(\mu).
\end{equation}
The bound $s(\mu)\le R(\mu)$ always holds: it is the inequality chain in the proof of
\Cref{prop:BR_gap_inequality}, where the supremum over the subspace $\ran\Phi^\star$ is bounded
by the one over $\calB$, equal to $R^{\star\star}(\mu)=R(\mu)$ (Lemma~\ref{lem:pullback_regularity}(a)). So
in each part it is enough to prove $s(\mu)\ge R(\mu)$.

\medskip\noindent\emph{Part $(i)$.} If $f\in\ran\Phi^\star\cap\partial R(\mu)$, then by
\eqref{eq:certificate_FY_equality} the value $\langle f,\mu\rangle_{\calB,\calB^\star}-R^\star(f)=R(\mu)$ is
attained by an $f$ admissible in the supremum defining $s(\mu)$, so $s(\mu)\ge R(\mu)$. This part uses only
\Cref{ass:BR_regularizer}, and \Cref{prop:BR_gap_inequality} already contains it; it is recorded here for reference.

\medskip\noindent
For parts $(ii)$ and $(iii)$ let $R=\|\cdot\|_{\calB^\star}$, so $R^\star=\iota_{B_\calB}$
(Lemma~\ref{lem:pullback_regularity}(e)). Then \eqref{eq:certificate_FY_equality} reduces to
$\|f\|_\calB\le1$ and $\langle f,\mu\rangle_{\calB,\calB^\star}=R(\mu)$, and
$s(\mu)=\sup_{f\in\ran\Phi^\star,\,\|f\|_\calB\le1}\langle f,\mu\rangle_{\calB,\calB^\star}$,
with $\langle f,\mu\rangle_{\calB,\calB^\star}\le\|f\|_\calB\,\|\mu\|_{\calB^\star}\le R(\mu)$ on the feasible
set.

\medskip\noindent\emph{Part $(ii)$: the rescaling argument.} Let
$f^\star\in\overline{\ran\Phi^\star}\cap\partial R(\mu)$, so $\|f^\star\|_\calB\le1$ and
$\langle f^\star,\mu\rangle_{\calB,\calB^\star}=R(\mu)$. Fix $\epsilon\in(0,1)$. As $f^\star$ lies in the
$\|\cdot\|_\calB$-closure of $\ran\Phi^\star$, there is $g\in\ran\Phi^\star$ with
$\|g-f^\star\|_\calB\le\epsilon$, whence $\|g\|_\calB\le\|f^\star\|_\calB+\epsilon\le1+\epsilon$ and
\[
    \langle g,\mu\rangle_{\calB,\calB^\star}
    =\langle f^\star,\mu\rangle_{\calB,\calB^\star}+\langle g-f^\star,\mu\rangle_{\calB,\calB^\star}
    \ge R(\mu)-\|g-f^\star\|_\calB\,\|\mu\|_{\calB^\star}\ge(1-\epsilon)R(\mu).
\]
Because $\ran\Phi^\star$ is a linear subspace, $\tilde g\coloneqq g/(1+\epsilon)$ again lies in
$\ran\Phi^\star$, with $\|\tilde g\|_\calB\le1$, so $\tilde g$ is admissible for $s(\mu)$ and
\[
    \langle\tilde g,\mu\rangle_{\calB,\calB^\star}
    =\frac{\langle g,\mu\rangle_{\calB,\calB^\star}}{1+\epsilon}\ge\frac{1-\epsilon}{1+\epsilon}\,R(\mu).
\]
Thus $s(\mu)\ge\tfrac{1-\epsilon}{1+\epsilon}R(\mu)$ for every $\epsilon\in(0,1)$, and letting
$\epsilon\downarrow0$ gives $s(\mu)\ge R(\mu)$.

\medskip\noindent\emph{Part $(iii)$.} Assume $\ran\Phi^\star$ is dense in $\calB$ and fix
$\mu\in\dom R$. By the definition of the dual norm,
$R(\mu)=\|\mu\|_{\calB^\star}=\sup_{\|f\|_\calB\le1}\langle f,\mu\rangle_{\calB,\calB^\star}$, so for
$\epsilon\in(0,1)$ there is $f_\epsilon\in\calB$ with $\|f_\epsilon\|_\calB\le1$ and
$\langle f_\epsilon,\mu\rangle_{\calB,\calB^\star}\ge(1-\epsilon)R(\mu)$. Running the argument of Part~(ii)
with $f_\epsilon$ in place of $f^\star$ (pick $g\in\ran\Phi^\star$ with
$\|g-f_\epsilon\|_\calB\le\epsilon$ and set $\tilde g=g/(1+\epsilon)$) yields an admissible $\tilde g$ with
$\langle\tilde g,\mu\rangle_{\calB,\calB^\star}\ge\tfrac{1-2\epsilon}{1+\epsilon}R(\mu)$, so
$s(\mu)\ge\tfrac{1-2\epsilon}{1+\epsilon}R(\mu)$; letting $\epsilon\downarrow0$ gives $s(\mu)=R(\mu)$ for
every $\mu\in\dom R$.
\end{proof}

\section{Some data-fidelity terms and Fenchel--Young losses}
\label{sec:datafidelity_FYL_app}

\subsection{Least squares (Gaussian noise)}\label{sec:LS_gaussian}
Let $y \in \mathcal{H}$ be the observed data. We consider the standard quadratic loss corresponding to additive Gaussian noise. This loss is finite-valued, $\calC^1$, strictly convex and coercive, so it lies in $\calF_0$. 

\noindent
\textbf{Primal:} $F(h) = \frac{1}{2} \|h - y\|_{\mathcal{H}}^2$; \textbf{Gradient:} $\nabla F(h) = h - y$; 
\textbf{Dual:} $F^\star(w) = \frac{1}{2} \|w\|_{\mathcal{H}}^2 + \langle w, y \rangle_{\mathcal{H}}$; 
\textbf{Dual gradient:} $\nabla F^\star(w) = w + y$; 
\textbf{Fenchel--Young loss:}
    \begin{equation}
    \notag
        L_F(h \parallel w) = \frac{1}{2} \| h - \nabla F^\star(w) \|_{\mathcal{H}}^2 = \frac{1}{2} \| h - (w + y) \|_{\mathcal{H}}^2.
    \end{equation}
Minimizing the Fenchel--Young loss over the dual variable $w$ thus targets the residual $h - y$.

\subsection{Logistic loss (binary classification)}
\label{sec:datafidelity_FYL_app:logistic}
\begin{subequations}
Let $y \in \{-1, 1\}^n$ be binary labels and $h \in \mathbb{R}^n$ be the logits. 

\noindent
\textbf{Primal:} $F(h) = \sum_{i=1}^n \log(1 + \exp(-y_i h_i))$; 
\textbf{Gradient:} $\nabla F(h)_i = -y_i \sigma(-y_i h_i)$, where $\sigma(t) = (1+e^{-t})^{-1}$ is the sigmoid function; 
\textbf{Dual:} The conjugate corresponds to the \emph{negative} binary entropy (the convex object a conjugate must be; the binary entropy itself is concave). For $w \in \mathbb{R}^n$ such that $-y_i w_i \in [0, 1]$ for all $i$, with the convention $0\log 0=0$ (so the boundary values are finite, e.g.\ $F^\star(0)=0$):
    \begin{equation}
    \notag
        F^\star(w) = \sum_{i=1}^n \left[ (-y_i w_i) \log(-y_i w_i) + (1 + y_i w_i) \log(1 + y_i w_i) \right]\,,\qquad\text{otherwise, }F^\star(w) = +\infty\,;
    \end{equation}
\textbf{Dual gradient:} $\nabla F^\star(w)_i = -y_i \log\left(\frac{-y_i w_i}{1 + y_i w_i}\right)$ (well defined for $-y_i w_i \in (0,1)$); 
\textbf{Fenchel--Young loss:}
    \begin{equation}
    \notag
        L_F(h \parallel w) = \sum_{i=1}^n \left( \log(1 + e^{-y_i h_i}) + (-y_i w_i) \log(-y_i w_i) + (1 + y_i w_i) \log(1 + y_i w_i) - w_i h_i \right).
    \end{equation}

    This loss $F$ is finite-valued, $\calC^1$ and strictly convex but not coercive, since $F\to0$ along correctly classified directions, so it lies outside $\calF_0$; because $\dom F=\calH$, the Fenchel--Rockafellar qualification still gives strong duality and dual attainment. The Fenchel--Young error bounds of Theorem~\ref{thm:FY_discrepancy_error_bounds} hold as stated, since they need only a proper, l.s.c.\ convex $F$ and a source-condition certificate (here $\dom F=\calH$, so $\objmeas(\mu^\star)<+\infty$ is automatic); leaving $\calF_0$ costs only the weak-coercivity bound~\eqref{eq:hyp_coercivite_faible}. Since $F$ is $\calC^1$ and strictly convex, the Mirror-Alignment characterization and the uniqueness of the aligned certificate still hold.
\end{subequations}

\subsection{Poisson loss (exponential link)}
\begin{subequations}
Let $y \in \mathbb{R}_+^n$ be count data. 

\noindent
\textbf{Primal:} $F(h) = \sum_{i=1}^n (\exp(h_i) - y_i h_i)$; 
\textbf{Gradient:} $\nabla F(h)_i = \exp(h_i) - y_i$; 
\textbf{Dual:} Let $w \in \mathbb{R}^n$. The domain is restricted to $\{w \mid w + y \ge 0\}$ (element-wise),
    \begin{equation}
    \notag
        F^\star(w) = \sum_{i=1}^n \left[ (w_i + y_i) \log(w_i + y_i) - (w_i + y_i) \right];
    \end{equation}
\textbf{Dual gradient:} $\nabla F^\star(w)_i = \log(w_i + y_i)$; 
\textbf{Fenchel--Young loss:}
    \begin{equation}
    \notag
        L_F(h \parallel w) = \sum_{i=1}^n \left( e^{h_i} + (w_i+y_i)\log\left(\frac{w_i+y_i}{e^{h_i}}\right) - (w_i+y_i) \right).
    \end{equation}
Equivalently, $L_F(h\parallel w)=\sum_i \KL(w_i+y_i\,\|\,e^{h_i})$ with the dual-shifted observation $w+y$ as the first argument and the predicted intensity $e^{h}$ as the second; the order matters because $\KL$ is asymmetric.

This formulation is standard for Poisson regression where the intensity parameter is modeled as $\nu = \exp(h)$ to enforce positivity (we write $\nu$ for the intensity to avoid collision with the regularization parameter $\lambda$).  This loss $F$ is finite-valued, $\calC^1$ and strictly convex, and it lies in $\calF_0$ provided every count is strictly positive ($y>0$); if some $y_i=0$ the coercivity bound~\eqref{eq:hyp_coercivite_faible} fails as $h_i\to-\infty$, so $F$ leaves $\calF_0$; the loss remains $\calC^1$ and strictly convex, so the Mirror-Alignment characterization and the uniqueness of the aligned certificate are unaffected, while the Fenchel--Young error bounds of Theorem~\ref{thm:FY_discrepancy_error_bounds} still hold given a source-condition certificate ($\dom F=\calH$, so $\objmeas(\mu^\star)<+\infty$).
\end{subequations}

\subsection{Kullback--Leibler divergence (linear link)}
\begin{subequations}
Here we assume the operator $\Phi$ maps to a positive domain, $h \in \mathbb{R}_+^n$, representing density or intensity directly. 

\noindent
\textbf{Primal:} $F(h) = \sum_{i=1}^n \left( h_i \log\left(\frac{h_i}{y_i}\right) - h_i + y_i \right)$; 
\textbf{Gradient:} $\nabla F(h)_i = \log(h_i) - \log(y_i)$; 
\textbf{Dual:} $F^\star(w) = \sum_{i=1}^n y_i (e^{w_i} - 1)$; 
\textbf{Dual gradient:} $\nabla F^\star(w)_i = y_i e^{w_i}$; 
\textbf{Fenchel--Young loss:}
    \begin{equation}
    \notag
        L_F(h \parallel w) = \sum_{i=1}^n \left( h_i \log\left(\frac{h_i}{y_i e^{w_i}}\right) - h_i + y_i e^{w_i} \right) = \KL(h \parallel y e^w).
    \end{equation}

    This loss $F$ is the Csisz{\'a}r I-divergence. Because $\dom F=\mathbb{R}_+^n\subsetneq\calH$, this fidelity lies outside $\calF_0$. The Fenchel--Young error bounds of Theorem~\ref{thm:FY_discrepancy_error_bounds} hold given a source-condition certificate, provided $\Phi\mu^\star\in\dom F$, that is $\objmeas(\mu^\star)<+\infty$: on the positive cone $\{\Phi\mu>0\}$ the loss $F$ is finite, $\calC^1$ and strictly convex, so the Mirror-Alignment and uniqueness statements also apply there.
\end{subequations}

\subsection{Huber loss (robust regression)}
\label{sec:datafidelity_FYL_app:huber}
\begin{subequations}
The Huber loss is standard in robust statistics for data with outliers. Let $y \in \mathbb{R}^n$ be the observed data. It behaves like a squared error for small residuals and like an absolute error for large residuals. Let $\delta > 0$ be the robustness threshold.

\noindent
\textbf{Primal:} The function is defined component-wise as $F(h) = \sum_{i=1}^n \rho_\delta(h_i - y_i)$, where:
    \begin{equation}
    \notag
        \rho_\delta(r) = \begin{cases}
            \frac{1}{2}r^2 & \text{if } |r| \le \delta, \\
            \delta(|r| - \frac{1}{2}\delta) & \text{otherwise;}
        \end{cases}
    \end{equation}
\textbf{Gradient:} The gradient acts as a ``soft'' saturation function (censoring large residuals):
    \begin{equation}
    \notag
        [\nabla F(h)]_i = \begin{cases}
            h_i - y_i & \text{if } |h_i - y_i| \le \delta, \\
            \delta \cdot \operatorname{sign}(h_i - y_i) & \text{otherwise;}
        \end{cases}
    \end{equation}
\textbf{Dual:} The convex conjugate is the squared norm restricted to a box constraint, so robust regression is dual to constrained least squares:
    \begin{equation}
    \notag
        F^\star(w) = \begin{cases}
            \frac{1}{2}\|w\|_\calH^2 + \langle w, y \rangle_\calH & \text{if } \|w\|_\infty \le \delta, \\
            +\infty & \text{otherwise;}
        \end{cases}
    \end{equation}
\textbf{Dual subdifferential:} For interior $w$ (i.e., $\|w\|_\infty<\delta$), $\nabla F^\star(w)$ is single-valued with $[\nabla F^\star(w)]_i = w_i + y_i$. For boundary $w$ with some $|w_i|=\delta$, $\partial F^\star(w)$ is set-valued: $\partial F^\star(w) = y + w + N_{B_\infty^\delta(0)}(w)$, and any selection in this set serves as a Fenchel--Young dual gradient;
\textbf{Fenchel--Young loss:}
    The loss $L_F(h \parallel w)$ penalizes the discrepancy between the residual and the saturated dual variable. For any $w$ such that $\|w\|_\infty \le \delta$:
    \begin{equation}
    \notag
        L_F(h \parallel w) = \sum_{i=1}^n \left( \rho_\delta(h_i - y_i) + \frac{1}{2}w_i^2 + w_i y_i - w_i h_i \right).
    \end{equation}
    If we choose the optimal dual $w = \nabla F(h)$ (the saturated residual), the loss vanishes.

     This loss $F$ is $\calC^1$ and coercive but not strictly convex, since it is affine in the large-residual regime, so it lies outside $\calF_0$. The gap decomposition, strong duality and the Fenchel--Young error bounds of Theorem~\ref{thm:FY_discrepancy_error_bounds} still hold ($\dom F=\calH$); only the optimal prediction $\Phi\mu^\star$ and the Mirror-Alignment identity~\eqref{eq:mirror_alignment_dual_certificate} are pinned down up to the flat directions of $F$.
\end{subequations}

\subsection{Squared hinge loss (L2-SVM)}
\label{sec:datafidelity_FYL_app:hinge}
\begin{subequations}
This loss function is a differentiable alternative to the standard hinge loss, often used to enforce classification margins while maintaining a smooth objective for optimization. Let $y \in \{-1, 1\}^n$ be binary labels.

\noindent\textbf{Primal:} $F(h) = \frac{1}{2} \sum_{i=1}^n \max(0, 1 - y_i h_i)^2$; 
\textbf{Gradient:} The gradient is:
    \begin{equation}
    \notag
        [\nabla F(h)]_i = -y_i \max(0, 1 - y_i h_i);
    \end{equation}
\textbf{Dual:} The convex conjugate involves a quadratic term restricted to a specific sign constraint. For the dual variable $w \in \mathbb{R}^n$, the domain is restricted to $K_y\coloneqq \{w \mid -y_i w_i \ge 0 \text{ for all } i\}$ (i.e., $w$ and $y$ must have opposite signs),
    \begin{equation}
    \notag
        F^\star(w) = \begin{cases}
            \sum_{i=1}^n \left( \frac{1}{2} w_i^2 + y_i w_i \right) & \text{if } -y_i w_i \ge 0 \text{ for all } i, \\
            +\infty & \text{otherwise;}
        \end{cases}
    \end{equation}
\textbf{Dual subdifferential:} For $w$ in the relative interior of $K_y$ (i.e., $-y_iw_i>0$ for every $i$), $\nabla F^\star$ is single-valued with $[\nabla F^\star(w)]_i = w_i + y_i$. At boundary points where some $w_i=0$, $\partial F^\star(w)=y+w+N_{K_y}(w)$ is set-valued; the formula below selects one element of this set.
\textbf{Fenchel--Young loss:}
    The loss $L_F(h \parallel w)$ measures the gap between the margin error and the dual certificate. For feasible $w$:
    \begin{equation}
    \notag
        L_F(h \parallel w) = \sum_{i=1}^n \left( \frac{1}{2}\max(0, 1 - y_i h_i)^2 + \frac{1}{2}w_i^2 + y_i w_i - w_i h_i \right).
    \end{equation}
    It measures how well the dual variable $w$ certifies the classification margin of the prediction~$h$.

     This loss $F$ is $\calC^1$ but neither coercive nor strictly convex, since it is flat along correctly classified directions, so it lies outside $\calF_0$. The Fenchel--Young error bounds of Theorem~\ref{thm:FY_discrepancy_error_bounds} still hold ($\dom F=\calH$), as they do not rely on the aligned base point; only the optimal prediction $\Phi\mu^\star$ and the Mirror-Alignment identity~\eqref{eq:mirror_alignment_dual_certificate} are pinned down up to those flat directions.
\end{subequations}

\end{document}